\newtheorem{theorem}{Theorem}[]%[section]
\newtheorem{lemma}[theorem]{Lemma}
\newtheorem{corollary}[theorem]{Corollary}
\newtheorem{proposition}[theorem]{Proposition}
\theoremstyle{definition}
\newtheorem{definition}[theorem]{Definition}
\newtheorem{example}[theorem]{Example}
\theoremstyle{remark}
\newtheorem{remark}[theorem]{Remark}
\newtheorem{property}[theorem]{Property}
\newtheorem{ex-prop}[theorem]{Example-Proposition}
\numberwithin{equation}{section}
\definecolor{gray}{rgb}{.5,.5,.5}
\definecolor{black}{rgb}{0,0,0}
\definecolor{blue}{rgb}{0,0,1}
\definecolor{red}{rgb}{1,0,0}
\def\red{\color{red}}
\definecolor{green}{rgb}{0,1,0}
\definecolor{yellow}{rgb}{1,1,.4}
\begin{document}

\title{Links arising from braid monodromy factorizations}
%\title{Local contributions to the braid monodromy factorizations coming from $k$-point intersections}

\author{Meirav Amram}
\address{Department of Mathematics, Bar-Ilan University, Ramat-Gan, 52900, Israel\newline and Shamoon College of Engineering,
Bialik/Basel Sts., Beer-Sheva 84100, Israel}
%\email{meirav@macs.biu.ac.il, meiravt@sce.ac.il}

\author{Moshe Cohen}
\address{Department of Mathematics, Bar-Ilan University, Ramat Gan 52900, Israel}
%\email{cohenm10@macs.biu.ac.il}
%\thanks{The author was supported by LSU VIGRE grant DMS-0739382 while conducting this research.}

\author{Mina Teicher}
\address{Department of Mathematics, Bar-Ilan University, Ramat Gan 52900, Israel}
%\email{teicher@macs.biu.ac.il}

%\stepcounter{footnote}

%\footnotetext{This work was supported in part by the Edmund Landau Center for Research in Mathematics,  the Emmy Noether Research Institute for Mathematics, the Minerva Foundation (Germany), the EU-network HPRN-CT-2009-00099(EAGER), the Israel Science Foundation grant $\#$ 8008/02-3 (Excellency Center ``Group Theoretic Methods in the Study of Algebraic Varieties''), and  the Oswald Veblen Fund.}

%\date{\today}

%MSC:
%14B05 Algebraic Geometry > Local Theory > Singularities [See also 14E15, 14H20, 14J17, 32Sxx, 58Kxx]
%14J17 Surfaces and higher-dimensional varieties > Singularities [See also 14B05, 14E15]
%57M25          Knots and links in $S^3$

\begin{abstract}
We investigate the local contribution of the braid monodromy factorization in the context of the links obtained
by the closure of these braids.  We consider plane curves which are arrangements of lines and conics as well as
some algebraic surfaces, where some of the former occur as local configurations in degenerated and regenerated
surfaces in the latter.  In particular we focus on degenerations which involve intersection points of
multiplicity two and three.  We demonstrate when the same links arise even when the local arrangements are
different.
\end{abstract}

\maketitle

\section{Introduction}
\label{sec:Intro}

Braid monodromy is a tool for studying topologically all kinds of  curve configurations on $2$-dimensional complex surfaces, or branch curves, and also of plane curves which do not appear as branch curves.  These computations fit into a program started by Moishezon-Teicher \cite{MT:simp, MTI, MTII, MTIII, MTIV, MTV} on using braid monodromy factorization as invariants of connected components of the moduli space of surfaces of general type.

In this paper we consider both plane curves and also branch curves of algebraic surfaces as the ramifications of the projection in order to get equivalences between their monodromies and the closures of the monodromies.

In the case of plane curves, there was much work done considering line arrangements \cite{GTV:wiring} and conic-line arrangements \cite{AGT:2conics, AGT:6, ATU}.

In the case of branch curves, the work of Moishezon-Teicher motivates the classification of algebraic surfaces by developing invariants  which differentiate between the components of the moduli space.

This work is followed by the degenerations and regenerations of surfaces and then heavy enumerations of objects in the degenerated surfaces and computations of monodromy and related groups.  Degenerations and regenerations of surfaces are tools to make these calculations more manageable; they are used in, for example, \cite{AGTV, ARST, AO, ATV:Hirz, ATV:TxT, CMT}.

In order to reduce these technical steps we find in this paper equivalences between various degenerations via the resulting monodromies.  We use a courser invariant theory by taking their closures, translating the problem to knot and link theory, through which we can say the braids are distinct when the links obtained are distinct.

%{\red RESPOND TO COMMENT!}

The overall objective in this context is to understand the building blocks of the braids that appear in the
braid monodromy factorization. This leads to a deeper understanding of factorizations that can possibly occur
which in turn may shed light on the type of surfaces arising.

In the case of a plane curve, we construct the table of monodromy of Moishezon-Teicher and consider the closures of braids obtained in this way.  In the case of a branch curve, we embed an algebraic surface $X$ in a projective space, and we study its degeneration into a union of planes $X_0$. Ordering the edges and vertices in $X_0$ lexicographically \cite{MT:simp} and in other ways, we get multiple intersection points which are known from \cite{ACMT, MT:simp, MTIV} as $k$-points, where $k$ is the number of edges meeting at the vertex.  In this paper we focus on 2- and 3-points.  Projecting $X_0$ onto $\mathbb{CP}^2$, we get a line arrangement $S_0$ and using the regeneration rules
\cite{MTIV}, we recover $S$ the branch curve of $X$, used to compute the braid monodromy factorization.  See for example \cite{ACMT, AFT:PxT, AFT:2Hirz, AT:TxT2}.  %  {\red WHAT DO YOU MEAN HERE:  We have to check that the braid monodromy factorization.}

As the braid monodromy technique gives us the braid monodromy factorization of $S$, it is an invariant which distinguishes between connected components of the moduli space.
We note that when we consider the affine part of the curves and compute the monodromy, we do not get the braid monodromy factorization but its part without the braids of the singularities at infinity. %For such curves, it is interesting to know if there are also special equivalences between monodromies and their closures as links.

From the perspective of knots, we investigate the braids in the braid monodromy factorization by way of local intersection points in the algebraic surfaces and categorize its building blocks.  We plan to give a complete list for higher multiple intersection points and
expect to obtain new local orderings in deformations of surfaces to answer various questions in algebraic geometry.

%{\red Put this somewhere?  Or remove?  Kassel-Turaev \cite{KT}}

%{\green the following paragraph  of significance and goals - please add what you want}
%{\red I don't think it is necessary to talk about future goals inside this paper, so I think we can delete this section.}
%We have few goals for the extended reserach in this direction. The first one is to complete the classification of algebraic surfaces. The second one is to recover a surface from algebraic information, such as its degeneration and its branch curve in $\mathbb{CP}^2$.  The second one is to investigate the geometric information contained in degenerations with respect to different orderings. We expect that not all orderings will allow for a reasonable geometric interpretation. Hence we have to find conditions on admissible orderings. Between braids obtained from different orderings, we expect to find various relations, including conjugation, Hurwitz equivalence, and other symmetries in the braid group.  This extends our research to the construction of new invariants of the surface.

This paper contributes to the eventual classification of algebraic surfaces.  We implement the closures of the braids to make it easier to understand some known examples.  By studying these as links, more simplification can be done.  We consider link invariants such as the number of components and the linking numbers between these components; we simplify further by reducing cables until the links we obtain appear on the Knot Atlas \cite{knotatlas} as prime.
We demonstrate that the same building blocks appear in several of these examples, and this local information will be used to build larger degenerated surfaces.  The degeneration pictures (seen in Figures \ref{fig:surface}, \ref{fig:surface2pts}, \ref{fig:pillow}, \ref{fig:example:CP1xCP1}, and  \ref{fig:surface2pts}) contain only the affine part, and this is why we ignore the singularities at infinity.  These similarities are difficult to recognize at the level of the braids, especially before the computations needed to simplify them.

%{\red RESPOND TO COMMENT}

The paper is organized as follows.  In Section \ref{sec:Background} we recall plane curves and the braid monodromy that can be obtained from the different singularity types.  We give alternate notations for braids, knots, and links.  In Section \ref{sec:Line} we compute monodromies and their closures that are related to plane curves coming from line arrangements and conic-line arrangements, and we find equivalences between some configurations in Propositions \ref{prop:conicline} and \ref{prop:conicconic}.  In Section \ref{sec:surfaces} we introduce the notion of degeneration and consider the different types of $2$- and $3$-points that arise under different enumerations of the vertices and lines in the degenerations.  In Section \ref{sec:Main} we compute the related monodromies and their closures for all types of $2$- and $3$-points.  In addition to these local contributions we also consider global symmetries in Proposition \ref{thm:mirror}.  In Section \ref{sec:examples} we give interesting examples of regenerations of 2-points  and 3-points and the results concerning their braids and closures. We show that the closures of monodromies of all types of 2-points are the same. This holds also for one type of 3-points:  in the regeneration process we get two double lines and one conic. The second type of 3-points is the exceptional case:  in the regeneration process we get two conics and one double line, giving only one possible labelling.

\section{Background}% in braid monodromy}
\label{sec:Background}

We will follow the braid monodromy algorithm of Moishezon-Teicher \cite{MTI, MTII}.  A detailed treatment may also be found in \cite{ACMT, AT:TxT1}.

Algebraically a braid is a word in the Artin group $B_m$ generated by $\sigma_1,\ldots,\sigma_{m-1}$, where
geometrically $\sigma_i$ takes the $i$-th strand over the $(i+1)$-st strand and acts as the identity on the others, as in Figure \ref{fig:sigma}.  % The operation of \emph{concatenation} $\beta_1\circ\beta_2$ makes the set of braids into a group.
See also \cite{Art}.

\begin{figure}[h]
\begin{center}
\begin{tikzpicture}

\draw[dotted] (4.5,-1.5) ellipse (5cm and 1cm);

\draw[line width=5pt, color=white] (5,0) .. controls (5,-.5) and (4,-1) .. (4,-1.5);
\draw (5,0) .. controls (5,-.5) and (4,-1) .. (4,-1.5);

\draw[line width=5pt, color=white] (4,0) .. controls (4,-.5) and (5,-1) .. (5,-1.5);
\draw (4,0) .. controls (4,-.5) and (5,-1) .. (5,-1.5);

\draw[line width=2pt, color=white] (4.5,0) ellipse (5cm and 1cm);
\draw[dotted] (4.5,0) ellipse (5cm and 1cm);

\foreach \x in {2,4,5,7}
    {   \fill[color=black] (\x,0) circle (3pt); }
\draw (3,0) node {$\ldots$};
\draw (6,0) node {$\ldots$};

\draw (2,.5) node {$1$};
\draw (4,.5) node {$i$};
\draw (5,.5) node {$i+1$};
\draw (7,.5) node {$m$};

\foreach \x in {2,4,5,7}
    {   \fill[color=black] (\x,-1.5) circle (3pt);  }
\draw (3,-1.5) node {$\ldots$};
\draw (6,-1.5) node {$\ldots$};

\draw (2,0) -- (2,-1.5);
\draw (7,0) -- (7,-1.5);

\end{tikzpicture}
    \caption{The element $\sigma_i$.}
    \label{fig:sigma}
\end{center}
\end{figure}
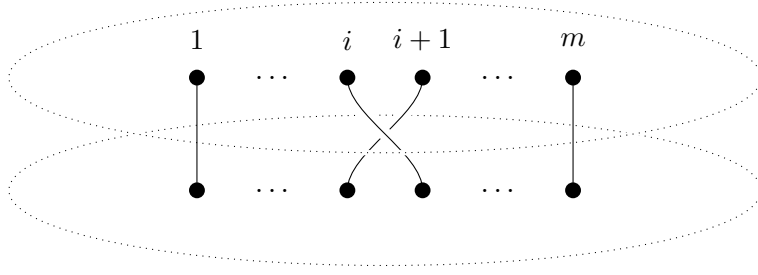

The element $\sigma_i$ can also be denoted $Z_{i\; i+1}$.  
This generalizes to $Z_{i\; j}$ as in Figure \ref{fig:skeleton}.  The reader unfamiliar with the $Z_{i\; j}$ notation may choose to read \cite{MTI}.

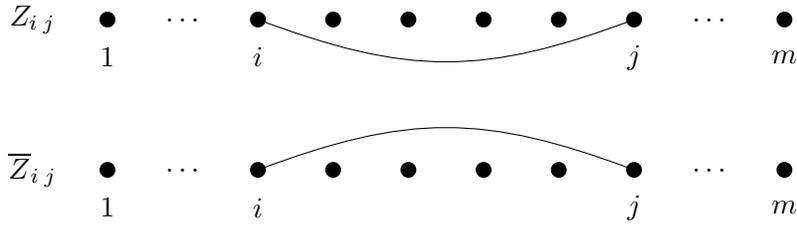
\begin{figure}[h]
\begin{center}
\begin{tikzpicture}
%\draw (4.5,0) ellipse (5cm and 1cm);

\draw (-1,0) node {$Z_{i\; j}$};

\fill[color=black] (0,0) circle (3pt);
\draw (0,-.5) node {$1$};
\draw (1,0) node {$\ldots$};
\fill[color=black] (2,0) circle (3pt);
\draw (2,-.5) node {$i$};
\fill[color=black] (3,0) circle (3pt);
\fill[color=black] (4,0) circle (3pt);
\fill[color=black] (5,0) circle (3pt);
\fill[color=black] (6,0) circle (3pt);
\draw (7,-.5) node {$j$};
\fill[color=black] (7,0) circle (3pt);
\draw (8,0) node {$\ldots$};
\fill[color=black] (9,0) circle (3pt);
\draw (9,-.5) node {$m$};

\draw (2,0) .. controls (4,-.75) and (5,-.75) .. (7,0);

\draw (-1,-2) node {$\overline{Z}_{i\; j}$};

\fill[color=black] (0,-2) circle (3pt);
\draw (0,-2.5) node {$1$};
\draw (1,-2) node {$\ldots$};
\fill[color=black] (2,-2) circle (3pt);
\draw (2,-2.5) node {$i$};
\fill[color=black] (3,-2) circle (3pt);
\fill[color=black] (4,-2) circle (3pt);
\fill[color=black] (5,-2) circle (3pt);
\fill[color=black] (6,-2) circle (3pt);
\draw (7,-2.5) node {$j$};
\fill[color=black] (7,-2) circle (3pt);
\draw (8,-2) node {$\ldots$};
\fill[color=black] (9,-2) circle (3pt);
\draw (9,-2.5) node {$m$};

\draw (2,-2) .. controls (4,-1.25) and (5,-1.25) .. (7,-2);

\end{tikzpicture}
    \caption{The braids associated with $Z_{i\; j}$ and $\overline{Z}_{i\; j}$.}
    \label{fig:skeleton}
\end{center}
\end{figure}

%We now re-interpret the $Z$ notation above using instead generators $\sigma_i$ of the braid group $B_m$.  Recall that $Z_{i\; j}$ is a half-twist between the $i$-th  and $j$-th strands as in Figure \ref{fig:sigma}.

\begin{property}
\label{property:node}
The braids $Z_{i\;j}$ and $\overline{Z}_{i\;j}$ can be re-written as
$$Z_{i\;j}=(\sigma_i\ldots\sigma_{j-2})\sigma_{j-1}(\sigma_i\ldots\sigma_{j-2})^{-1}=(\sigma_{i+1}\ldots\sigma_{j-1})^{-1}\sigma_{i}(\sigma_{i+1}\ldots\sigma_{j-1}),$$
$$\overline{Z}_{i\;j}=(\sigma_{j-2}\ldots\sigma_i)^{-1}\sigma_{j-1}(\sigma_{j-2}\ldots\sigma_i)=(\sigma_{j-1}\ldots\sigma_{i+1})\sigma_{i}(\sigma_{j-1}\ldots\sigma_{i+1})^{-1}.$$
\end{property}

%Depict the half-twist $Z_{i\; j}$ as the crossing between strands $i$ and $j$ in front of all of the other strands, as in Figure \ref{fig:conjugatedsigma}.  The half-twist $\overline{Z}_{i\; j}$ has a crossing behind the other strands.

\begin{property}
\label{property:highermult}
The braid $Z_{i\;i+1\;\ldots\; i+k}$ can be re-written as
$$Z_{i\;i+1\;\ldots\; i+k}=(\sigma_{i+k-1}\ldots\sigma_{i})(\sigma_{i+k-1}\ldots\sigma_{i+1})\ldots(\sigma_{i+k-1}\sigma_{i+k-2})(\sigma_{i+k-1}).$$
\end{property}

For a more in-depth overview of knots and links, including linking number, see for example \cite{KT}.

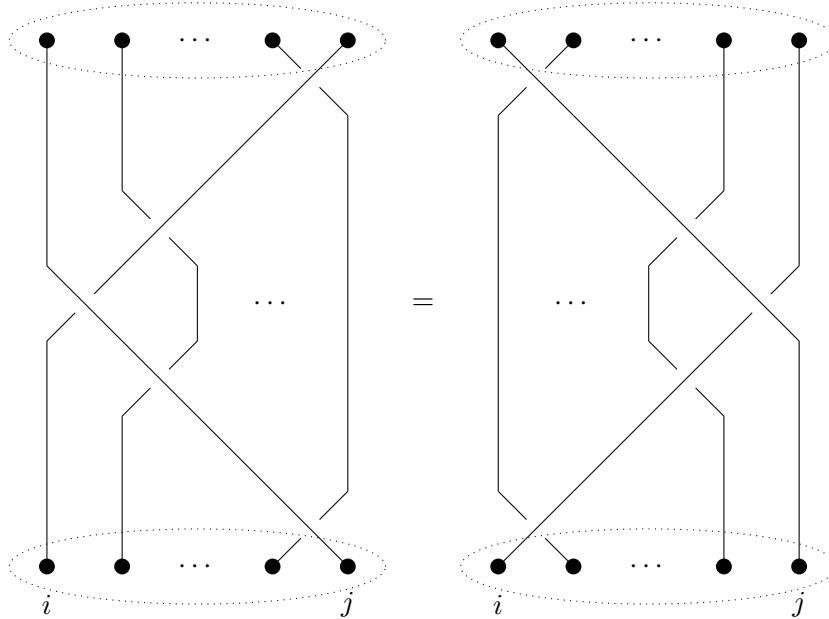
\begin{figure}[h]
\begin{center}
\begin{tikzpicture}

%POSITIVE IN BRAIDS, NEGATIVE IN MATHEMATICA
\foreach \x/ \y in {0/3,1/2,3/0}
    {
    \draw (\x,\y) -- (\x+1,\y+1);
    \draw[color=white, line width=10] (\x+1,\y) -- (\x,\y+1);
    \draw (\x+1,\y) -- (\x,\y+1);
    }

%THESE ARE POSITIVE IN MATHEMATICA, NEGATIVE IN BRAIDS:
\foreach \x/ \y in {1/4, 3/6}
    {
    \draw (\x+1,\y) -- (\x,\y+1);
    \draw[color=white, line width=10] (\x,\y) -- (\x+1,\y+1);
    \draw (\x,\y) -- (\x+1,\y+1);
    }

\draw[dotted] (2,0) ellipse (2.5cm and .5cm);
\draw[dotted] (2,7) ellipse (2.5cm and .5cm);

\draw (0,0) -- (0,3);
\draw (0,4) -- (0,7);
\draw (1,0) -- (1,2);
\draw (1,5) -- (1,7);
\draw (2,2) -- (3,1);
\draw (2,3) -- (2,4);
\draw (2,5) -- (3,6);
\draw (4,1) -- (4,6);

\draw (2,0) node {$\cdots$};
\draw (3,3.5) node {$\cdots$};
\draw (2,7) node {$\cdots$};

\foreach \x in {0,1,3,4}
    {   \fill[color=black] (\x,0) circle (3pt); }
\foreach \x in {0,1,3,4}
    {   \fill[color=black] (\x,7) circle (3pt); }
\draw (0,-.5) node {$i$};
\draw (4,-.5) node {$j$};

\draw (5,3.5) node {$=$};

%POSITIVE IN BRAIDS, NEGATIVE IN MATHEMATICA
\foreach \x/ \y in {9/3,8/4,6/6}
    {
    \draw (\x,\y) -- (\x+1,\y+1);
    \draw[color=white, line width=10] (\x+1,\y) -- (\x,\y+1);
    \draw (\x+1,\y) -- (\x,\y+1);
    }

%THESE ARE POSITIVE IN MATHEMATICA, NEGATIVE IN BRAIDS:
\foreach \x/ \y in {8/2, 6/0}
    {
    \draw (\x+1,\y) -- (\x,\y+1);
    \draw[color=white, line width=10] (\x,\y) -- (\x+1,\y+1);
    \draw (\x,\y) -- (\x+1,\y+1);
    }

\draw[dotted] (8,0) ellipse (2.5cm and .5cm);
\draw[dotted] (8,7) ellipse (2.5cm and .5cm);

\draw (10,0) -- (10,3);
\draw (10,4) -- (10,7);
\draw (9,0) -- (9,2);
\draw (9,5) -- (9,7);
\draw (8,2) -- (7,1);
\draw (8,3) -- (8,4);
\draw (8,5) -- (7,6);
\draw (6,1) -- (6,6);

\draw (8,0) node {$\cdots$};
\draw (7,3.5) node {$\cdots$};
\draw (8,7) node {$\cdots$};

\foreach \x in {6,7,9,10}
    {   \fill[color=black] (\x,0) circle (3pt); }
\foreach \x in {6,7,9,10}
    {   \fill[color=black] (\x,7) circle (3pt); }
\draw (6,-.5) node {$i$};
\draw (10,-.5) node {$j$};

\end{tikzpicture}
    \caption{The related braid $Z_{i\; j}$.}
    \label{fig:conjugatedsigma}
\end{center}
\end{figure}

Braids given by conjugation in the $Z_{i\; j}$ notation can be much more complicated.  Figure \ref{fig:skeleton2} gives a step-by-step method to determine, for example, the relatively easy conjugation $(Z_{3\; 8})^{Z^2_{3\; 5}Z^2_{3\; 4}}$.

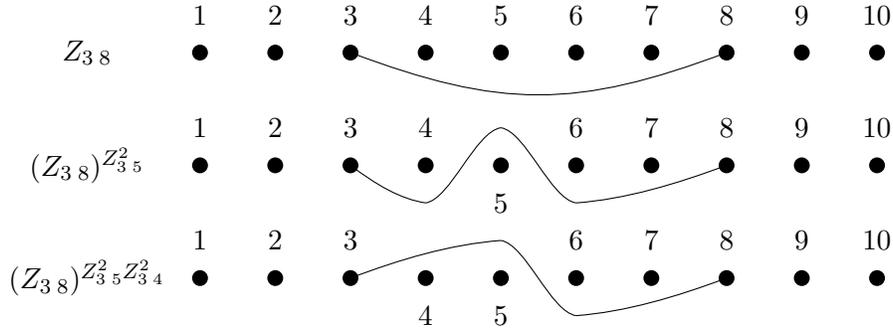
\begin{figure}[h]
\begin{center}
\begin{tikzpicture}

\draw (-1.5,0) node {$Z_{3\; 8}$};

\foreach \x in {1,...,10}
    {
    \fill[color=black] (\x-1,0) circle (3pt);
    \draw (\x-1,.5) node{\x};
    }

\draw (2,0) .. controls (4,-.75) and (5,-.75) .. (7,0);

\draw (-1.5,-1.5) node {$(Z_{3\; 8})^{Z^2_{3\; 5}}$};

\foreach \x in {1,2,3,4,6,7,8,9,10}
%\foreach \x in {1,...,10}
    {
    \fill[color=black] (\x-1,-1.5) circle (3pt);
    \draw (\x-1,-1) node{\x};
    }
    \fill[color=black] (4,-1.5) circle (3pt);
    \draw (4,-2) node{5};

\draw (5,-2) .. controls (5.66,-1.95) and (6.33,-1.75) .. (7,-1.5);
%\draw (5.5,-2.5) .. controls (6,-2.45) and (6.5,-2.25) .. (7,-1.5);

\draw (2,-1.5) .. controls (2.33,-1.75) and (2.66,-1.95) .. (3,-2);

\draw (4,-1) .. controls (4.33,-1.05) and (4.66,-1.95) .. (5,-2);

\draw (3,-2) .. controls (3.33,-1.95) and (3.66,-1.05) .. (4,-1);

%\draw[color=red] (2,-1.5) .. controls (4,-.75) and (5,-.75) .. (7,-1.5);

\draw (-1.5,-3) node {$(Z_{3\; 8})^{Z^2_{3\; 5}Z^2_{3\; 4}}$};

\foreach \x in {1,2,3,6,7,8,9,10}
%\foreach \x in {1,...,10}
    {
    \fill[color=black] (\x-1,-3) circle (3pt);
    \draw (\x-1,-2.5) node{\x};
    }
\fill[color=black] (3,-3) circle (3pt);
    \draw (3,-3.5) node{4};
\fill[color=black] (4,-3) circle (3pt);
    \draw (4,-3.5) node{5};
%\fill[color=black] (0,-4) circle (3pt);
%\draw (0,-4.5) node {$1$};

\draw (2,-3) .. controls (2.66,-2.75) and (3.33,-2.55) .. (4,-2.5);
%\draw (2,-3) .. controls (2.5,-2.75) and (3,-2.55) .. (3.5,-2.5);
%\draw (2,-3) .. controls (2.5,-2.25) and (3,-2.05) .. (3.5,-2);

\draw (5,-3.5) .. controls (5.66,-3.45) and (6.33,-3.25) .. (7,-3);
%\draw (5.5,-3.5) .. controls (6,-3.45) and (6.5,-3.25) .. (7,-3);
%\draw (5.5,-4) .. controls (6,-3.95) and (6.5,-3.75) .. (7,-3);
%\draw (7.5,-4) .. controls (8,-3.95) and (8.5,-3.75) .. (9,-3);

\draw (4,-2.5) .. controls (4.33,-2.55) and (4.66,-3.45) .. (5,-3.5);
%\draw (3.5,-2.5) .. controls (4,-2.55) and (5,-3.45) .. (5.5,-3.5);

%\draw[color=red] (2,-3.5) .. controls (3,-2.75) and (4,-2.75) .. (4.5,-3.5);
%\draw[color=red] (4.5,-3.5) .. controls (5,-4.5) .. (5.5,-3.5);
%\draw[color=red] (5.5,-3.5) .. controls (6,-2.75) .. (7,-3.5);

\end{tikzpicture}
    \caption{An example of a conjugated braid $(Z_{3\; 8})^{Z^2_{3\; 5}Z^2_{3\; 4}}$.}
    \label{fig:skeleton2}
\end{center}
\end{figure}

\subsection{Singularity types}

We detail the different types of singularities in a plane curve.

\subsubsection{Branch point}  For convenience here we consider only conics opening to the right as in Figure \ref{fig:conic}.  Note that to the right of the singularity, each typical fiber has two real intersection points.  However, to the left of the singularity the intersection points are complex.

The associated exponent $\varepsilon$ of a branch point is 1.

\begin{figure}[h]
\begin{center}
\begin{tikzpicture}

\draw (3,0) .. controls (0,1) and (0,2) .. (3,3);
\fill[color=black] (.75,1.5) circle (3pt);
\draw (-.5,1.5) node {branch point};
%\draw (.25,1.5) node {branch point}; -- O in POINT!

%\draw (3.25,0) node {$i$};
%\draw (3.25,3) node {$i'$};
%\draw (3.25,3.5) node {$j$};

\end{tikzpicture}
    \caption{The conic and its branch point.}
    \label{fig:conic}
\end{center}
\end{figure}
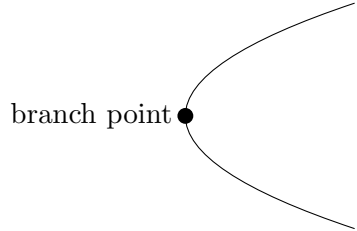

%The associated exponent $\varepsilon$ of a branch point is 1.  %A branch point with skeleton $\langle i,i+1\rangle$ induces the diffeomorphism $\Delta^{1/2}\langle i,i+1\rangle$, which is a counter-clockwise quarter-twist.  One can think of this twist as moving the intersection points onto the real axis.

\subsubsection{Node}  A node is the intersection of two components.  The associated exponent $\varepsilon$ of a node is 2.  
%A node with skeleton $\langle i,i+1\rangle$ induces the diffeomorphism $\Delta\langle i,i+1\rangle$, which is a counter-clockwise half-twist.%  See Example \ref{exampleNode}.

\begin{example}
\label{exampleNode}
Consider the arrangement of three lines forming a triangle as in Figure \ref{fig:triangle}.

\begin{figure}[h]
\begin{center}
\begin{tikzpicture}

\draw (0,1) -- (4,1);
\draw (0,0) -- (3,3);
\draw (1,3) -- (4,0);

\draw (4.5,0) node {$1$};
\draw (4.5,1) node {$2$};
\draw (4.5,3) node {$3$};

    \fill[color=black] (1,1) circle (3pt);
\draw (1.25,.75) node {$3$};
    \fill[color=black] (2,2) circle (3pt);
\draw (2,1.6) node {$2$};
    \fill[color=black] (3,1) circle (3pt);
\draw (2.75,.75) node {$1$};

\end{tikzpicture}
    \caption{An arrangement of three lines.}
    \label{fig:triangle}
\end{center}
\end{figure}
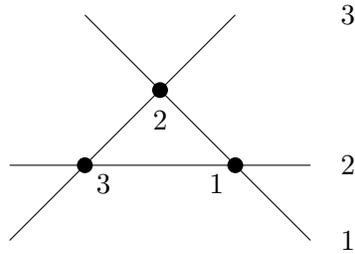

%Numbering the nodes from right to left, the first skeleton is $\langle 1,2\rangle$ and the first diffeomorphism is $\Delta\langle1,2\rangle$, giving the simple related braid $Z^2_{1\;2}$.  The second skeleton is $\langle 2,3\rangle$ and its diffeomorphism is $\Delta\langle2,3\rangle$, giving the related braid $\overline{Z}^2_{1\;3}$ after conjugating $Z^2_{2\;3}$ by $Z^2_{1\;2}$.  The third and last skeleton is $\langle 1,2\rangle$ and its diffeomorphism is $\Delta\langle1,2\rangle$, giving the related braid $Z^2_{2\;3}$ after conjugating $Z^2_{1\;2}$ by $Z^2_{2\;3}$ and then $Z^2_{1\;2}$.

%The braid monodromy factorization for this configuration is $Z^2_{1\;2}\cdot\overline{Z}^2_{1\;3}\cdot Z^2_{2\;3}$, the ordered product of the related braids.

%As a final check, note that $\deg\Delta^2=3\cdot2=6$, and there are no missing braids.

The braids related to the three nodes are $Z^2_{1\;2}$, $\overline{Z}^2_{1\;3}$, and $Z^2_{2\;3}$.  Their product can be expressed as:
\begin{equation*}
%\Delta^2=
\sigma_1^2\cdot(\sigma^{-1}_1\sigma_2\sigma_1)^2\cdot\sigma_2^2=\sigma_1\sigma_2^2\sigma_1\sigma_2^2.
\end{equation*}

The closure %$\widehat{\Delta^2}$ of $\Delta^2$ for a braid with three strands
of this braid is the torus link $T(3,3)$.% on three strands with three complete half-twists.
\end{example}

\subsubsection{Intersection point at infinity}
\label{subsec:NodeInf}  The associated exponent $\varepsilon$ of an intersection point at infinity is 2.  
%An intersection point at infinity with skeleton $\langle i,j\rangle$ induces the diffeomorphism $\Delta\langle i,j\rangle$, which is a counter-clockwise half-twist.
  For several parallel lines, consider the lexicographic ordering of the related braids when multiplying them together (considering the ordering from the right-hand side).

\subsubsection{Tangency}  The associated exponent $\varepsilon$ of a tangency is 4.  %A tangency with skeleton $\langle i,i+1\rangle$ induces the diffeomorphism $\Delta^2\langle i,i+1\rangle$, which is a counter-clockwise full-twist.

In Section \ref{sec:surfaces}, we explain in more detail that the regeneration of a tangency yields three cusps, each of whose associated exponent $\varepsilon$ is 3.  See \cite{MTIV}.

%Note that in Table \ref{tab:SingularityTypes} above, there appears to be an entry missing whose exponent $\varepsilon$ is 3.  This singularity type is the cusp; however, this type does not enter into the algorithm described above.  On the other hand, the exponent $\varepsilon=3$ may appear elsewhere:  see Section \ref{sec:surfaces}.

\subsubsection{Intersection points with higher multiplicities $k>2$}  The associated exponent $\varepsilon$ of an intersection point with multiplicity $k>2$ is 2.
%An intersection point with skeleton $\langle i,\ldots,i+(k-1)\rangle$ induces the diffeomorphism $\Delta\langle i,\ldots,i+(k-1)\rangle$,
%which is a counter-clockwise half-twist of all $k$ strands locally.
%  In particular, t
  This exchanges the $i$-th and $(i+(k-1))$-st positions, the ($i+1$)-st and $(i+(k-1)-1)$-st positions, and so on.

\section{Plane arrangements with lines and conics}
\label{sec:Line}

In this section, we consider three kinds of arrangements:  first those with only lines, then those with only conics, and then those with both.

First we note that all local arrangements of $m$ lines give the same local contribution to the braid
monodromy factorization.  In particular, the closure of this braid is the well-studied torus link on
$m$ strands twisted $m$ times.

In this section we begin with generic line arrangements, where each pair of lines intersects at exactly one distinct node.  From there we consider parallel lines as well as central arrangements, in order to produce the more general case, for example the one seen in Example \ref{ex:general}.

%\subsection{Generic line arrangements}
%\label{subsec:Generic}

Given a generic arrangement of $m$ lines, every pair of lines intersects exactly once, and every intersection point is a node where exactly two lines meet.  Then the $\binom{m}{2}$ nodes, each with degree 2, contribute to the total degree $m(m-1)$ of the braid monodromy factorization of $\Delta^2$, which is two full twists on $m$ strands.%, and as this is the degree of $\Delta^2$, that is all.

\begin{proposition} (Moishezon-Teicher) \cite[Proposition-Example VIII.2.1]{MTI}
For a generic arrangement of $m$ lines, that is, with only double points and with no parallel lines, the braid monodromy factorization is equivalent to (with respect to an equivalent relation that has not been defined in this paper) %has the product form
$$\prod_{\ell=2}^m\prod_{k=1}^{\ell-1}Z^2_{k\; \ell}.$$
%$$\prod_{k=1}^{m-1}\prod_{\ell=k}^{m}Z^2_{k\; \ell}. (NOT our order)$$
\end{proposition}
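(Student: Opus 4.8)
The plan is to combine an invariance/reduction step with a direct computation on one convenient arrangement, organized as an induction on $m$.

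First I would set up the braid monodromy algorithm of Section \ref{sec:Background} for the degree-$m$ curve $S_0=L_1\cup\cdots\cup L_m$: fix a generic projection $\mathbb{C}^2\to\mathbb{C}$ so that the generic fiber meets $S_0$ in $m$ points and the discriminant consists of the $\binom{m}{2}$ images of the nodes $L_k\cap L_\ell$. Since every singularity of a generic arrangement is an ordinary node, each singular fiber contributes a full twist on the two colliding strands, i.e.\ a $Z^2$ of the band joining the two corresponding points of the fiber (Property \ref{property:node}, with $\varepsilon=2$); summing these degrees gives $2\binom{m}{2}=m(m-1)=\deg\Delta^2$ on $m$ strands, so the product of all the local braids must be $\Delta^2$, which is exactly the claim in aggregate. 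Next I would invoke the fact that the \BMF{} is a deformation invariant within an equisingular family, up to the equivalence (Hurwitz moves together with simultaneous conjugation) alluded to in the statement; since the ordered generic arrangements of $m$ lines form a connected family, it suffices to establish the stated normal form for a single, conveniently chosen generic arrangement.

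For that arrangement I would take $L_1,\dots,L_m$ real and in general position, together with a real line of projection, chosen so that the $\binom{m}{2}$ nodes project to distinct points of $\mathbb{R}$ sorted in exactly the order in which the factors appear in $\prod_{\ell=2}^{m}\prod_{k=1}^{\ell-1}$ --- first $L_1\cap L_2$, then $L_1\cap L_3,\,L_2\cap L_3$, then $L_1\cap L_4,\,L_2\cap L_4,\,L_3\cap L_4$, and so on --- and so that along the base fiber the points of $L_1,\dots,L_m$ occur in index order. Concretely this can be arranged, for instance, by taking a small generic perturbation of a pencil, or $m$ lines suitably tangent to a fixed convex conic, and one then reads the skeletons off the real picture. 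Processing the singular fibers from left to right, I would argue by induction that when the sweep reaches the node $L_k\cap L_\ell$ (with $k<\ell$) the branches of $L_k$ and $L_\ell$ sit in the $k$-th and $\ell$-th positions of the normalized fiber and are joined by the standard band, so that this node contributes precisely $Z^2_{k\;\ell}$; because the local monodromy at a node is a full twist --- a pure braid, trivial as a permutation --- the normalized fiber data for the remaining nodes is unaffected, keeping the inductive hypothesis intact. Multiplying the contributions in the swept order then yields $\prod_{\ell=2}^{m}\prod_{k=1}^{\ell-1}Z^2_{k\;\ell}$; equivalently, in the inductive step one checks that adding the line $L_m$ ``to the right of and below'' a standard arrangement of $L_1,\dots,L_{m-1}$ appends exactly the block $\prod_{k=1}^{m-1}Z^2_{k\;m}$.

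The main obstacle is the geometric bookkeeping in this last step: one must exhibit a genuinely generic arrangement for which the sorted-node property and the position-compatibility hold simultaneously, so that every node produces the \emph{unconjugated} band $Z^2_{k\;\ell}$ rather than some conjugate $\overline{Z}^2$ or worse (compare Example \ref{exampleNode}, where a different ordering of the triangle's nodes already forces a $\overline{Z}^2_{1\;3}$). The subtlety is that in the real slice the two branches at a node do exchange, while the braid-theoretic monodromy around it is the identity permutation; the clean way to reconcile this is to track the skeleton data of the Moishezon--Teicher algorithm rather than naive heights, using that a full twist on two strands leaves the skeletons of the remaining nodes undisturbed. Should the explicit sorted model prove awkward, the fallback is to compute the \BMF{} for the simplest case directly, obtain \emph{some} factorization of $\Delta^2$ into $\binom{m}{2}$ squared bands, and then use Hurwitz moves to sort it into the stated product --- in which case the work shifts to showing, again by induction on $m$, that the sorting terminates at exactly $\prod_{\ell=2}^{m}\prod_{k=1}^{\ell-1}Z^2_{k\;\ell}$ by peeling off the $L_m$-block.
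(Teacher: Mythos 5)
First, a point of comparison: the paper does not prove this statement at all --- it is quoted from Moishezon--Teicher \cite[Proposition-Example VIII.2.1]{MTI}, and the only internal commentary is the remark that the displayed product is written in a reverse-lexicographic ordering \emph{not} corresponding to the order of the singularities along the axis. That remark already signals the issue with your plan. Your outer strategy (the factorization is an invariant, up to Hurwitz moves and simultaneous conjugation, of the connected equisingular family of generic arrangements, so it suffices to treat one explicit model) is the standard and correct shape of argument. The gap is in the core step: you assert that one can choose a real arrangement and projection so that \emph{every} node contributes the literal, unconjugated band $Z^2_{k\,\ell}$, and you justify the induction by saying that the monodromy of a node is a pure braid, hence ``the normalized fiber data for the remaining nodes is unaffected.'' This conflates the permutation induced by the monodromy with its action as a mapping class (and with the Lefschetz half-twist used to transport vanishing paths past a singular fiber): a full twist is trivial as a permutation but certainly not trivial on isotopy classes of bands, and the two real branches do exchange heights at the node, so the skeletons of the nodes processed later are generically dragged around intervening points and come out conjugated. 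The paper's own Example \ref{exampleNode} is the minimal counterexample to your claim: for a triangle of three lines the algorithm yields $Z^2_{1\,2}$, $\overline{Z}^2_{1\,3}$, $Z^2_{2\,3}$, with the middle band forced to pass around the third point; whenever the two lines meeting at a node are not adjacent in the fiber ordering to the right of that node, a conjugation is unavoidable, so no choice of ``sorted'' model makes all $\binom{m}{2}$ bands straight simultaneously.

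Two smaller problems compound this. The degree count $2\binom{m}{2}=\deg\Delta^2$ does not by itself show that the product of the local braids is $\Delta^2$ (that is a separate theorem about braid monodromy of a degree-$m$ curve), and in any case knowing the product is $\Delta^2$ is far weaker than knowing the factorization is Hurwitz-and-conjugation equivalent to $\prod_{\ell}\prod_{k<\ell}Z^2_{k\,\ell}$. Your fallback --- compute some geometric factorization into conjugated squared bands and then ``Hurwitz-sort'' it, peeling off the $L_m$-block --- is not a repair but a restatement of exactly what \cite[Proposition-Example VIII.2.1]{MTI} proves; the substance of that argument is the explicit bookkeeping of the conjugated skeletons (as in Property \ref{property:node} and Example \ref{exampleNode}) together with the Hurwitz moves that absorb the conjugations, and none of that is supplied in your proposal. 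So as written the proof has a genuine gap at its central computational claim, and the claim itself is false in the strong form you state it.
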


Observe that this is written in some reverse-lexicographic ordering not corresponding to the order of the singularities on the axis.%  Therefore we must be able to express it in a different order.

%{\red
%Lemma:  $(Z^2_{k\; \ell})^{some conjugation}=Z^2_{k'\; \ell'}$ or something like that.
%Prop:  braid monodromy factorization in our correct order is STILL a product of Z's (???)
%THM:  We get links looking like blah.
%}

\begin{property}
\label{prop:linearr}
Consider the closure of $\Delta^2$ on $m$ strands.  This link has the following properties:
\begin{enumerate}
    \item it has $m$ components,
    \item each of the $m$ components is itself unknotted, and
    \item each pair of components considered alone is the Hopf link.%has linking number 1 and is the Hopf link.
\end{enumerate}
Furthermore it is the torus link $T(m,m)$ on $m$ strands twisted $m$ times.
\end{property}

One can also think of the lines on the plane as great circles on the sphere, and with the condition that each node becomes a positive crossing, this achieves the link described above.

Consider next the central arrangement of $m$ lines.  Then one need only consider the intersection point with multiplicity $m$ as described above, and the braid monodromy factorization has $\Delta^2$ as its one factor, which was treated in the generic case above.  See Property \ref{property:highermult}.

Now consider the arrangement of $m$ parallel lines.  Then the ``intersection point at infinity'' in the lexicographic order is as described above, and again the braid monodromy factorization has $\Delta^2$ as its one factor.  However, this point at infinity will not be considered in the local contribution to the braid monodromy factorization.
%Observe that for line arrangements, the local contribution picks up the entire braid monodromy factorization except for intersection points at infinity.
  See the following example.

\begin{example}
\label{ex:general}
Consider the line arrangement in Figure \ref{fig:exampleGeneral} with a triple point and parallel lines.

\begin{figure}[h]
\begin{center}
\begin{tikzpicture}

\draw (0,1) -- (4,1);
\draw (0,0) -- (3,3);
\draw (1,3) -- (4,0);

\draw (0,2) -- (2,0);

\draw (2.5,0) node {$1$};

\draw (4.5,0) node {$2$};
\draw (4.5,1) node {$3$};
\draw (3.5,3) node {$4$};

    \fill[color=black] (1,1) circle (3pt);
\draw (1,1.5) node {$3$};
    \fill[color=black] (2,2) circle (3pt);
\draw (2,1.5) node {$2$};
    \fill[color=black] (3,1) circle (3pt);
\draw (3,1.5) node {$1$};

\end{tikzpicture}
    \caption{A general example of a line arrangement.}
    \label{fig:exampleGeneral}
\end{center}
\end{figure}
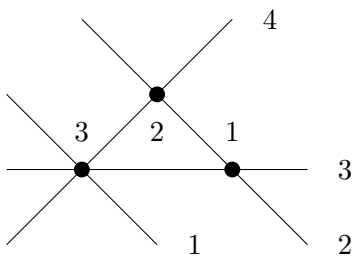

The braid monodromy table is given in Table \ref{tab:exampleGeneral}.

\begin{table}[h]
    \centering
    \caption{The braid monodromy table for a more general arrangement of four lines.}
\vspace{.25cm}
\begin{tabular}{c|c|c}
singularity &   exponent $\varepsilon$   & related braid\\
\hline
 &  & \\
1   &   2 &  $Z^2_{2\;3}$ \\
 &  & \\
2   &   2 &  $\overline{Z}^2_{2\;4}$ \\
 &  & \\
3   &   2 &  $Z^2_{1\;3\;4}$
\end{tabular}
\vspace{.2cm}
    \label{tab:exampleGeneral}
\end{table}

The product of the five resulting braids is given in Equation (\ref{eqn:example:fivelines}):
\begin{equation}
\label{eqn:example:fivelines}
\sigma_2^2\cdot(\sigma_2^{-1}\sigma_3^2\sigma_2)\cdot[\sigma_1(\sigma_3\sigma_2\sigma_3)^2\sigma_1^{-1}]
=\sigma_2\sigma_3\sigma_1\sigma_2(\sigma_1\sigma_2\sigma_3)^2.
\end{equation}

A horizontal depiction of this braid is given Figure \ref{fig:ex:fourlines}.

\begin{figure}[h]
\begin{center}
\begin{tikzpicture}

%Inverse elts at (i-1)/(\ell-j) where \sigma_i occurs at position j for length \ell
\foreach \x/ \y in {0/1, 1/0, 1/2, 2/1, 3/2, 4/1, 5/0, 5/2, 6/1, 7/0}
    {
    \draw (\x+1,\y) -- (\x,\y+1);
    \draw[color=white, line width=10] (\x,\y) -- (\x+1,\y+1);
    \draw (\x,\y) -- (\x+1,\y+1);
    }

%Elts at (i-1)/(\ell-j) where \sigma_i occurs at position j for length \ell
%\foreach \x/ \y in {0/1,0/2,0/3,1/4,0/5,0/6,0/7,1/8,0/9,0/10,0/11}
%    {
%    \draw (\x,\y) -- (\x+1,\y+1);
%    \draw[color=white, line width=10] (\x+1,\y) -- (\x,\y+1);
%    \draw (\x+1,\y) -- (\x,\y+1);
%    }

\draw (0,0) -- (1,0);
\draw (2,0) -- (5,0);
\draw (6,0) -- (7,0);
\draw (3,1) -- (4,1);
\draw (7,2) -- (8,2);
\draw (0,3) -- (1,3);
\draw (2,3) -- (3,3);
\draw (4,3) -- (5,3);
\draw (6,3) -- (8,3);

\end{tikzpicture}
    \caption{The braid obtained from Example \ref{ex:general}.}%The local contribution to the braid monodromy factorization.}
    \label{fig:ex:fourlines}
\end{center}
\end{figure}
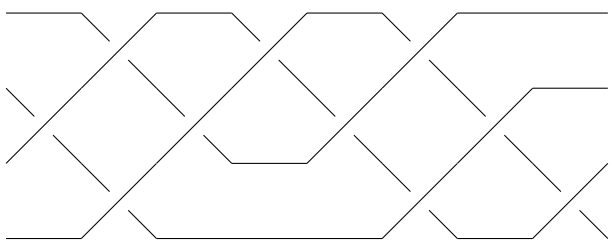

The closure of this braid gives a link of four components, each of which is itself an unknot with no crossings.  The intersection point at infinity, which is not included here in this local contribution, would link the first two components.  All other pairs have linking number one.

This link can also be interpreted as the Hopf link L2a1 on the Thistlethwaite Link Table on the Knot Atlas \cite{knotatlas} whose first component unknot is replaced by its (2,0)-cable and whose second component unknot is replaced by its (2,2)-cable.  Recall that a ($p,t$)-cable has $p$ parallel copies and $t$ twists.
\end{example}

Next we consider some arrangements of conics and lines together. In particular, some of these appear in the
local partial regenerations of surfaces as in Section \ref{sec:surfaces}.

\begin{proposition}
\label{prop:conicline} Consider the two arrangements of a single conic and a single line shown in Figure
\ref{fig:example:conicline}.

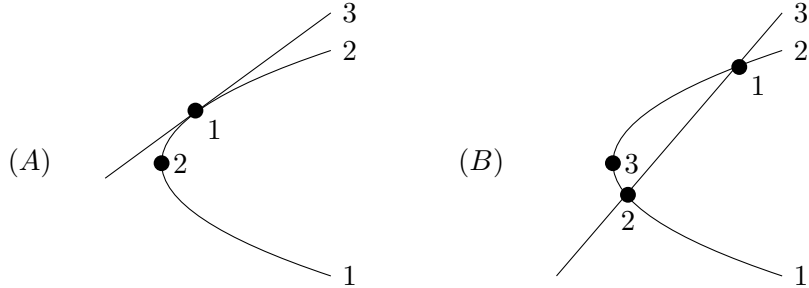
\begin{figure}[h]
\begin{center}
\begin{tikzpicture}

\draw (-1,1.5) node {$(A)$};

\draw (3,0) .. controls (0,1) and (0,2) .. (3,3);
\draw (3,3.5) -- (0,1.3);
\fill[color=black] (.75,1.5) circle (3pt);
\draw (1,1.5) node {$2$};
\fill[color=black] (1.2,2.2) circle (3pt);
\draw (1.45,1.95) node {$1$};

\draw (3.25,0) node {$1$};
\draw (3.25,3) node {$2$};
\draw (3.25,3.5) node {$3$};

\draw (5,1.5) node {$(B)$};

\draw (9,0) .. controls (6,1) and (6,2) .. (9,3);
\draw (9,3.5) -- (6,0);
\fill[color=black] (6.75,1.5) circle (3pt);
\draw (7,1.5) node {$3$};
\fill[color=black] (6.95,1.08) circle (3pt);
\draw (6.95,.73) node {$2$};
\fill[color=black] (8.43,2.78) circle (3pt);
\draw (8.68,2.53) node {$1$};

\draw (9.25,0) node {$1$};
\draw (9.25,3) node {$2$};
\draw (9.25,3.5) node {$3$};

\end{tikzpicture}
    \caption{Two arrangements of a single conic and a single line.}
    \label{fig:example:conicline}
\end{center}
\end{figure}

Then the braids given by the local contribution of the braid monodromy factorization are the same.
Furthermore, the link obtained by the closure of this braid is %L5a1 on \cite{knotatlas}, as shown in Figure \ref{fig-L5a1}	.
the torus link $T(2,4)$.  This is also the alternating link L4a1 on the Thistlethwaite Link Table on the Knot Atlas \cite{knotatlas} as shown in Figure \ref{fig-L4a1}.
\end{proposition}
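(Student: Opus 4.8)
The plan is to run the Moishezon--Teicher algorithm on each of the two arrangements, read off the local braid monodromy factorization as a word in $B_3$, simplify the two words, check that they agree up to conjugacy, and then reduce the closure of the common braid to a two--strand braid that one recognizes as $T(2,4)$.

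First I would fix the set--up. In both $(A)$ and $(B)$ the curve is a conic together with a line, so a generic vertical fiber meets it in three points and the braids lie in $B_3$. In $(A)$ the two singular points contributing to the local factorization are the branch point of the conic ($\varepsilon=1$) and the point where the line meets the conic, which here is a tangency ($\varepsilon=4$); in $(B)$ they are the branch point of the conic ($\varepsilon=1$) together with the two transversal intersections of the line with the two branches of the conic, i.e. two nodes ($\varepsilon=2$ each). In either case the total local degree is $5$, which is the first thing to check. For each arrangement I would then order the singular points along the projection axis, as recorded by the labels in Figure~\ref{fig:example:conicline}, and follow the three real points of the fiber along the axis in order to determine the skeleton attached to each singular point --- the pair (or triple) of strands involved and the choice between $Z$ and $\overline Z$.

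Next I would read off and multiply the local braids, using Properties~\ref{property:node} and \ref{property:highermult} together with the standard rules (a branch point contributes a half--twist of the two coalescing strands, a node a $Z^2$, and a tangency a $Z^4$, each on its appropriate skeleton), conjugating each factor by the transport along the chosen path and forming the ordered product. For $(A)$, since the line grazes the outer branch of the conic, both contributions sit on adjacent pairs of strands and one gets a word conjugate to $\sigma_2^4\sigma_1$, i.e. to $\sigma_1\sigma_2^4$. For $(B)$ the line crosses the conic on both sides of its branch point, so after transport one of the two nodes is attached to a non--adjacent pair of strands; one gets a word built from $Z^2_{2\;3}$, $Z^2_{1\;3}$ and $Z_{1\;2}$, and the real work is to simplify it, using $Z^2_{1\;3}=\sigma_1\sigma_2^2\sigma_1^{-1}$ and $\sigma_1\sigma_2\sigma_1=\sigma_2\sigma_1\sigma_2$, to the form $\sigma_2^2\sigma_1\sigma_2^2$ and to observe that this is conjugate to $\sigma_1\sigma_2^4$; thus the two local contributions are the same braid up to conjugacy.

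Finally I would identify the closure of the common braid. Its underlying permutation is a single transposition, coming from the branch point of the conic, so the closure has two components: the line, which is one strand, and the conic, whose two strands are joined by the branch half--twist. After a cyclic rotation the braid is $\sigma_1\sigma_2^4$, in which $\sigma_1$ occurs once, so a single Markov destabilization removes that strand and leaves $\sigma_1^4\in B_2$, whose closure is by definition $T(2,4)$; this is exactly the ``reduce the cable'' step mentioned in the introduction, the conic being a two--strand cable of an unknot carrying one clasp around which the line winds twice. To place $T(2,4)$ in the Thistlethwaite table it then suffices to note that it is an alternating link of crossing number $4$ with two unknotted components of linking number $2$, which is the description of L4a1, whose standard alternating diagram is Figure~\ref{fig-L4a1}. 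I expect the main obstacle to be the bookkeeping for arrangement $(B)$: because the line passes between the two branches of the conic, one node sits on a non--adjacent pair of strands and transporting the other singular points past it introduces conjugations, and getting these exactly right is what makes the two a priori rather different words in $B_3$ collapse to conjugate braids rather than merely to braids with the same closure.
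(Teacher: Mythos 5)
Your proposal is correct and follows essentially the same route as the paper: compute the local factors ($Z^4_{2\,3}$ and the conjugated branch point for (A); $Z^2_{2\,3}$, $Z^2_{1\,3}$, $Z_{1\,2}$ for (B)), multiply and simplify in $B_3$, then destabilize the extra strand to get $\sigma^4$ on two strands, i.e.\ $T(2,4)=$ L4a1. The only refinement worth noting is that in (A) the branch-point factor is $(Z_{1\,2})^{Z^2_{2\,3}}=\sigma_2^{-2}\sigma_1\sigma_2^2$ rather than a bare $\sigma_1$, so the product is literally $\sigma_2^2\sigma_1\sigma_2^2$, exactly matching the simplified word for (B) on the nose and not merely up to conjugacy.
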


%\begin{figure}[h]%!b]
%  \begin{center}
%    \includegraphics[scale=0.65]{L5a1.eps}
%  \end{center}
%
%  \caption{The alternating link L5a1.}
%  \label{fig-L5a1}
%\end{figure}

\begin{figure}[h]%!b]
  \begin{center}
    \includegraphics[scale=0.65]{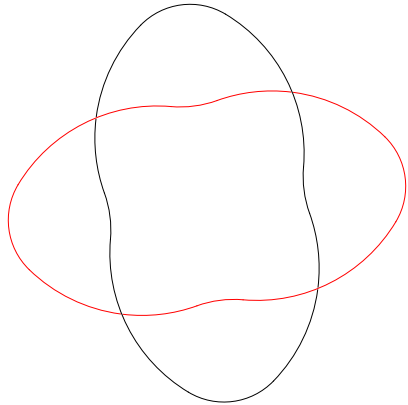}
  \end{center}

  \caption{The alternating link L4a1.}
  \label{fig-L4a1}
\end{figure}

\begin{proof}
Consider the arrangement of a single line tangent to a single conic as in Figure \ref{fig:example:conicline} (A).
The braid monodromy table for just these two points gives $Z_{2\;3}^4$ and $(Z_{1\;2})^{Z^2_{2\;3}}$,
contributing locally $\sigma_2^4\cdot(\sigma_2^{-2}\sigma_1\sigma_2^2)=\sigma_2^2\sigma_1\sigma_2^2$.

Alternatively one could slide the line of this arrangement so that it passes through the conic twice as in
Figure \ref{fig:example:conicline} (B), creating two nodes and the branch point.  Here the resulting braids are
$Z^2_{2\;3}$, $Z^2_{1\;3}$, and $Z_{1\;2}$, which contribute the product
$\sigma_2^2\cdot(\sigma_2^{-1}\sigma_1^2\sigma_2)\cdot\sigma_1=\sigma_2\sigma_1^2\sigma_2\sigma_1=
\sigma_2^2\sigma_1\sigma_2^2$  like above.

%{\red figure of braid and maybe also this knot}

After a stabilization move, the braid is $\sigma_2^4$, whose closure is $T(2,4)$.
%The closure of this braid is L5a1, as shown in Figure \ref{fig-L5a1}, the first two-component link with five alternating crossings
%on the link table in \cite{knotatlas}.  To obtain this link, one can alternatively take the Hopf link
%and take the (2,1)-cable of one of its components, giving linking number two.
\end{proof}

Next we consider some arrangements of conics only. Observe that Figure \ref{fig:example:conicconic} (B) appears in the partial regeneration of the 3-point with the second type as considered in Example \ref{ex:3ptTypeII} and Figure \ref{fig:two3ptcases} (II).

\begin{proposition}
\label{prop:conicconic} Consider the two arrangements of two conics shown in Figure
\ref{fig:example:conicconic}.

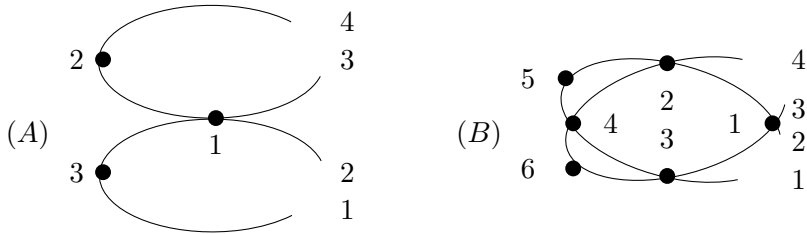
\begin{figure}[h]
\begin{center}
\begin{tikzpicture}

\draw (-1,1.5) node {$(B)$};

\draw[shift={(-.5,1)}] [rotate around={15:(3,1.5)}] (3,1.5) arc (45:345:1.5 and .75);
\draw[shift={(0,0)}] [rotate around={-15:(3,1.5)}] (3,1.5) arc (15:315:1.5 and .75);

\fill[color=black] (2.9,1.65) circle (3pt);
\draw (2.4,1.65) node {$1$};
\fill[color=black] (1.5,2.45) circle (3pt);
\draw (1.5,1.95) node {$2$};
\fill[color=black] (1.5,.95) circle (3pt);
\draw (1.5,1.45) node {$3$};
\fill[color=black] (.25,1.65) circle (3pt);
\draw (.75,1.65) node {$4$};
\fill[color=black] (.15,2.25) circle (3pt);
\draw (-.35,2.25) node {$5$};
\fill[color=black] (.25,1.05) circle (3pt);
\draw (-.35,1.05) node {$6$};

\draw (3.25,.9) node {$1$};
\draw (3.25,1.4) node {$2$};
\draw (3.25,1.85) node {$3$};
\draw (3.25,2.5) node {$4$};

\draw (-7,1.5) node {$(A)$};

\draw[shift={(-6.5,1.5)}] (3,1.5) arc (45:345:1.5 and .75);
\draw[shift={(-6.1,-.35)}] (3,1.5) arc (15:315:1.5 and .75);

\fill[color=black] (-4.5,1.72) circle (3pt);
\draw (-4.5,1.37) node {$1$};
\fill[color=black] (-6,1) circle (3pt);
\draw (-6.35,1) node {$3$};
\fill[color=black] (-6,2.5) circle (3pt);
\draw (-6.35,2.5) node {$2$};

\draw (-2.75,.5) node {$1$};
\draw (-2.75,1) node {$2$};
\draw (-2.75,2.5) node {$3$};
\draw (-2.75,3) node {$4$};

%\draw (5,1.5) node {$(B)$};
%
%
%\draw[shift={(5.5,1.5)}] (3,1.5) arc (45:345:1.5 and .75);
%\draw[shift={(5.9,-.35)}] (3,1.5) arc (15:315:1.5 and .75);
%
%\fill[color=black] (7.5,1.72) circle (3pt);
%\draw (7.5,1.37) node {$1$};
%\fill[color=black] (6,1) circle (3pt);
%\draw (5.65,1) node {$3$};
%\fill[color=black] (6,2.5) circle (3pt);
%\draw (5.65,2.5) node {$2$};
%
%\draw (9.25,.5) node {$1$};
%\draw (9.25,1) node {$2$};
%\draw (9.25,2.5) node {$3$};
%\draw (9.25,3) node {$4$};

\end{tikzpicture}
    \caption{Two conic arrangements.}
    \label{fig:example:conicconic}
\end{center}
\end{figure}

Then the first link obtained by the closure of the braids given in the local contribution to the braid monodromy
factorization is L4a1 as on \cite{knotatlas} as shown in Figure \ref{fig-L4a1}, and the second is L4a1 with one component replaced with its (2,1)-cable.
\end{proposition}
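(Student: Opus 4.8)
The plan is to read the braid monodromy table off each of the two pictures in Figure~\ref{fig:example:conicconic}, following the Moishezon--Teicher algorithm of Section~\ref{sec:Background}, to multiply the resulting $Z_{i\;j}$-braids in the order in which the singularities occur along the axis, to simplify the word using Properties~\ref{property:node} and~\ref{property:highermult} together with free reduction, and finally to identify the closure by a sequence of stabilization and conjugation moves, exactly in the spirit of the proof of Proposition~\ref{prop:conicline}. In each picture a branch point contributes a factor $Z_{i\;i+1}$ (exponent $\varepsilon=1$), a node contributes $Z^2_{i\;j}$ (exponent $2$), and a tangency of the two conics contributes $Z^4_{i\;j}$ (exponent $4$); the strands involved and the conjugating braid are dictated by the position of the singularity relative to the others, the kind of bookkeeping illustrated in Figure~\ref{fig:skeleton2}.

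For arrangement~(A) the table has three entries: the tangency of the two conics together with the two branch points. After taking the product and rewriting the conjugated factors via Property~\ref{property:node}, the two branch-point generators appear as the first and last generator on strands that no longer interact with the rest of the word, so they drop out under stabilization moves, leaving a pure twist on two strands. Its closure is $T(2,4)=$ L4a1 --- exactly the link already produced in the proof of Proposition~\ref{prop:conicline} (Figure~\ref{fig-L4a1}) --- which settles the first assertion.

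For arrangement~(B) the same recipe applies, now with the six singularities visible in the figure. Because this configuration occurs as a partial regeneration of the type~II $3$-point (in the sense of \cite{MTIV} and Section~\ref{sec:surfaces}, cf.\ Example~\ref{ex:3ptTypeII}), one of the two components carries a doubled strand pair, and the effect on the computation is that everything outside that strand pair collapses exactly as in~(A) while the pair itself retains a full twist $\sigma^2$ that cannot be stabilized away. Hence the closure is the link of~(A) with the corresponding component replaced by two parallel strands carrying one twist, that is, L4a1 with one component replaced by its $(2,1)$-cable. As a consistency check I would compute the invariants used elsewhere in the paper: the resulting link has three components, the two doubled strands form a Hopf link (linking number one), and each of them has linking number two with the third component, which pins the link down on the Thistlethwaite table \cite{knotatlas}.

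The real work is the second computation. One has to extract the correct conjugating braids for all six singularities of Figure~\ref{fig:example:conicconic}(B) from the geometry --- these are $Z_{i\;j}$ conjugations of the intricate type illustrated in Figure~\ref{fig:skeleton2} --- then reduce a fairly long braid word and perform enough stabilization and conjugation moves both to exhibit the $(2,1)$-cable structure and to match the end result, orientations and framing included, with the specific Knot Atlas entry. Arrangement~(A) mainly serves to fix the normalization; once the table for~(B) is recorded correctly, the remaining steps are routine, if lengthy.
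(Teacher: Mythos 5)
Your overall strategy is the paper's: build the monodromy table, multiply the $Z_{i\,j}$-factors, simplify, and identify the closure after stabilization and conjugation, and your treatment of arrangement (A) matches the paper's ($\sigma_2^2\sigma_3\sigma_1\sigma_2^2$, two destabilizations, $T(2,4)=$ L4a1). But for arrangement (B) there is a genuine gap: you never carry out the computation, and the shortcut you substitute for it is false. The interaction between the two conics in (B) comes from four nodes, not from a tangency, so it does not ``collapse exactly as in (A)'': the two curve components have linking number $4$ in (B) versus $2$ in (A), which is precisely why the answer is a cabled L4a1 rather than L4a1 itself. The paper does the work explicitly: the six singularities give $Z^2_{2\,3}$, $\overline{Z}^2_{2\,4}$, $Z^2_{1\,3}$, $(Z^2_{1\,4})^{Z^{-2}_{3\,4}}$, $Z_{3\,4}$, $Z_{1\,2}$, whose product simplifies to $\sigma_2\sigma_3^2\sigma_1^2\sigma_2\sigma_3\sigma_1\sigma_2^2$ on four strands, and after one stabilization and conjugation to $\sigma_2\sigma_1^3\sigma_2\sigma_1^3\sigma_2$ on three strands; only then is the cable structure visible. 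Also, the appeal to the type~II $3$-point regeneration is misleading here: in (B) there is no doubled line at all, just two conics, each contributing a pair of strands fused by its own branch point.

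Worse, your proposed consistency check is factually wrong and would derail the verification. The closure in (B) has \emph{two} components, not three: the underlying permutation of $\sigma_2\sigma_3^2\sigma_1^2\sigma_2\sigma_3\sigma_1\sigma_2^2$ is $(1\,2)(3\,4)$, reflecting that the branch points $Z_{1\,2}$ and $Z_{3\,4}$ close each conic's strand pair into a single component; and the two components have linking number $4$ (one from each of the four nodes), with no Hopf sublink of linking number one anywhere. In the paper's convention (used again in the later Example-Propositions), a $(2,1)$-cable of an unknotted component is connected, so ``L4a1 with one component replaced by its $(2,1)$-cable'' is still a two-component link. So while your final sentence states the correct answer, the argument offered neither derives the braid nor would the invariants you plan to check confirm it; the six-factor computation and the correct component/linking-number count are exactly the content that needs to be supplied.
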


%\begin{figure}[h]
%\centering
%
%\subfigure[L9n18]{
%   \includegraphics[scale =.65] {L9n18.eps}
%   \label{fig:L9n18}
% }
%\quad
% \subfigure[L4a1]{
%   \includegraphics[scale =.65] {L4a1.eps}
%   \label{fig:L4a1}
% }
%
%\label{fig:L9n18andL4a1}
%\caption{Two links arising from arrangements of two conics.}
%\end{figure}

% --------------------------MOVED THIS FIGURE UP-----------------
%\begin{figure}[h]%!b]
%  \begin{center}
%    \includegraphics[scale=0.65]{L4a1.eps}
%  \end{center}
%
%  \caption{The alternating link L4a1.}
%  \label{fig-L4a1}
%\end{figure}

\begin{proof}
Consider the arrangement of two conics with a single tangency along with the two branch points as in Figure \ref{fig:example:conicconic} (A).  The braid monodromy table for these three points gives the related braids $Z_{2\;3}^4$, $(Z_{3\;4})^{Z^2_{2\;3}}$, and $(Z_{1\;2})^{Z^2_{2\;3}}$ contributing locally $\sigma_2^4\cdot(\sigma_2^{-2}\sigma_3\sigma_2^2)\cdot(\sigma_2^{-2}\sigma_1\sigma_2^2)=\sigma_2^2\sigma_3\sigma_1\sigma_2^2$.  After two stabilization moves on the closure, this gives the same as the closure of $\sigma_2^4$ on two strands, which is the four-crossing alternating link L4a1 on \cite{knotatlas} as shown in Figure \ref{fig-L4a1}.

Alternatively consider the arrangement of two conics with no tangencies, giving four nodes along with the two branch points as in Figure \ref{fig:example:conicconic} (B).  The braid monodromy table for these six points gives the related braids $Z_{2\;3}^2$, $\overline{Z}_{2\;4}^2$, $Z^2_{1\;3}$, $(Z_{1\;4}^2)^{Z_{3\;4}^{-2}}$, $Z_{3\;4}$, and $Z_{1\;2}$.  Together these contribute
\begin{equation*} \sigma_2^2\cdot(\sigma_3\sigma_2^2\sigma_3^{-1})\cdot(\sigma_2^{-1}\sigma_1^2\sigma_2)\cdot(\sigma_3\sigma_2^{-1}\sigma_1^2\sigma_2\sigma_3^{-1})\cdot\sigma_3\cdot\sigma_1
\end{equation*}
\begin{equation}
\label{eqn:example:twoconicsnodes}
=\sigma_2\sigma_3^2\sigma_1^2\sigma_2\sigma_3\sigma_1\sigma^2_2,
\end{equation}
%which is the two-component Hopf link L2a1 on the Knot Atlas \cite{knotatlas} but where each component is replaced with its (2,1)-cable.  Recall that a ($p,t$)-cable has $p$ parallel copies and $t$ twists.
whose closure is the same as $\sigma_2\sigma_1^3\sigma_2\sigma_1^3\sigma_2$ on three strands after a stabilization move and some conjugation.  This gives a two-component link where each component interacts as in L4a1 on the Knot Atlas \cite{knotatlas} but where one component unknot is replaced with its (2,1)-cable.  Recall that a ($p,t$)-cable has $p$ parallel copies and $t$ twists.

This is also the two-component link where each component interacts as in the Hopf link L2a1 on the Knot Atlas \cite{knotatlas} but where each component replaced with its (2,1)-cable.
\end{proof}

\begin{remark}
The arrangements which appear in Figures  \ref{fig:example:conicline} and \ref{fig:example:conicconic} are the
affine pieces of the projective arrangements. The projective arrangements are not themselves interesting because they yield
the complete braid monodromy factorization, which always gives the torus link $T(m,m)$.
\end{remark}

\section{Surfaces}
\label{sec:surfaces}

Let $X$ be an algebraic surface embedded in projective space $\mathbb{CP}^n$.
Projecting $X$ onto the projective plane $\mathbb{CP}^2$ we get its
branch curve $S$, that is,
the ramification locus of the projection.

In order to better understand the complicated branch curve $S$,
we degenerate the surface $X$.  A partial degeneration gives a union of squares, each of which is homeomorphic to $\mathbb{CP}^1\times \mathbb{CP}^1$, using horizontal and vertical lines.  Each square is further degenerated into two planes by adding diagonal lines to obtain a union $X_0$ of triangles representing planes.  A detailed explanation of the degeneration process may be found in \cite{MTIII} and in further work including \cite{ACMT, AG, AO, AT:TxT1}, for example.  We take the following definition from \cite{ACMT}.

\begin{definition}
\label{df1}
% {\bf Projective degeneration}
Let $D$ be the unit disc,
and $X, Y$ be algebraic surfaces (or more generally algebraic varieties).
Suppose that $k: Y \rightarrow \mathbb{CP}^n$ and $k': X \rightarrow \mathbb{CP}^n$
are projective embeddings.
We say that $k'$ is a \emph{projective degeneration} of $k$
if there exist a flat family $\pi: V \rightarrow D$,
and an embedding $F:V\rightarrow D \times \mathbb{CP}^n$,
such that $F$ composed with the first projection is $\pi$,
and:
\begin{itemize}
\item[(a)] $\pi^{-1}(0) \simeq X$;
\item[(b)] there is a $t_0 \neq 0$ in $D$ such that
$\pi^{-1}(t_0) \simeq Y$;
\item[(c)] the family $V-\pi^{-1}(0) \rightarrow D-{0}$
is smooth;
\item[(d)] restricted to $\pi^{-1}(0)$, $F = {0}\times k'$
under the identification of $\pi^{-1}(0)$ with $X$;
\item[(e)] restricted to $\pi^{-1}(t_0)$, $F = {t_0}\times k$
under the identification of $\pi^{-1}(t_0)$ with $Y$.
\end{itemize}
\end{definition}

To demonstrate we explain this process for the degeneration of the Hirzebruch surface $F_2(1,2)$ into six
planes (the triangles) in Figure \ref{fig:surface}.  There are five lines separating the six planes once we ignore the boundary.  The intersections of these five lines occur at six points, four of which has multiplicity two and so are called 2-points (as marked by black vertices in the figure) and two of which are called 1-points (as marked by white vertices).  We project $X_0$ onto the plane and obtain a branch curve $S_0$ depicted by these five lines with these six intersection points.  In order to recover the original branch curve $S$ of $X$, we must regenerate $S_0$.

%{\red Here we removed definition}

%We give here  a precise definition for degeneration.
%\begin{definition}
%\label{df1}
%A degeneration of $X$ is a proper surjective morphism with connected fibers
%$\pi  : V \rightarrow \mathbb{C}$ from an algebraic variety $V$,
%such that the restriction
%$\pi: V \setminus \pi^{-1}(0) \rightarrow \mathbb{C} \setminus \{0\}$
%is smooth and that $\pi^{-1}(t) \cong X$ for $t\neq0$.
%
%When $X$ is projective with an embedding $k: X \hookrightarrow \mathbb{CP}^n$,
%   a degeneration  $\pi  : V \rightarrow \mathbb{C}$ of $X$ is called
%a projective degeneration
%   of $k$, if there exists a  morphism $F  : \ V \rightarrow
%\mathbb{CP}^n \times \mathbb{C}$,
%   such that the restriction $\ F_t = F \mid_{\pi^{-1}(t)} \ :
%\pi^{-1}(t) \rightarrow \mathbb{CP}^n \times t$
%   is an embedding of $\pi^{-1}(t)$ for all $t\in \mathbb{C}$ and that
%$F_1 = k$ under the identification of
%$\pi^{-1}(1)$ with $X$.
%\end{definition}

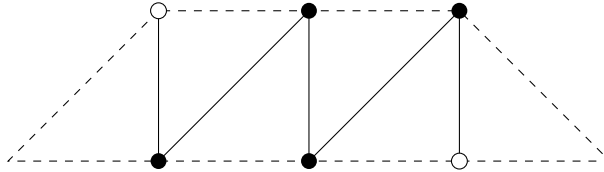
\begin{figure}[h]
\begin{center}
\begin{tikzpicture}

\draw (2,2) -- (2,0) -- (4,2) -- (4,0) -- (6,2) -- (6,0);
% -- (8,0) --  (6,2) -- cycle;
\draw[dashed] (2,2) -- (0,0) -- (8,0) -- (6,2) -- cycle;
%\draw (1,1) -- (1,0) -- (2,1) -- (2,0) -- (3,1) -- (3,0) -- (4,0) --  (3,1) -- cycle;
%\draw (1,1) -- (0,0) -- (3,0);
\fill[color=black] (2,0) circle (3pt);
\fill[color=black] (4,0) circle (3pt);
\fill[color=black] (4,2) circle (3pt);
\fill[color=black] (6,2) circle (3pt);

	\fill[color=white] (2,2) circle (3pt);
	\draw (2,2) circle (3pt);
	\fill[color=white] (6,0) circle (3pt);
	\draw (6,0) circle (3pt);

\end{tikzpicture}
     \caption{The Hirzebruch surface $F_2(1,2)$ degenerated into six planes.}
     \label{fig:surface}
\end{center}
\end{figure}

  Now we explain in general the regeneration process, for any branch
curve $S_0$. The degenerated branch curve
$S_0$ has degree say $m$. However each of the $m$ lines of $S_0$
should be counted as a double line in the
scheme-theoretic branch locus, since it arises from a line of nodes.
Another way to see this is to note that the
regeneration of $X_0$ induces a regeneration of $S_0$ in such a way
that each point, say $c$, on the typical
fiber is replaced by two nearby points $c, c'$. The resulting branch
curve $S$ will have degree $2m$.

In full generality the branch curve $S_0$ has $k$-points as intersections for any $k$.  The regeneration process for large $k$ can be quite difficult, but work has been done for some values:  see \cite{FT:5}, \cite{AT:TxT1}, and \cite{AGST:8} for 5-, 6-, and 8-points, respectively.  In this work we restrict our attention to regenerations of just $2$- and $3$-points.

We show in Subsections \ref{subsec:41} and \ref{subsec:42} that different types of these $k$-points arise based on different orderings of the lines.  These line orderings are determined by orderings on the vertices.  In additional to the common lexicographic ordering, others are given.

The lines intersecting at these $k$-points are horizontal, vertical, or diagonal based on the construction from squares and triangles.  In the regeneration process we do not differentiate between horizontal and vertical lines; these will eventually regenerate into double lines.  On the other hand, diagonal lines will regenerate into conics.  We are concerned with intersections of these lines and conics.  The following lemma from \cite{MTIV} explains how tangencies arise in the regeneration process.

\begin{lemma}\label{lm:24}(Moishezon-Teicher)\cite{MTIV}
Let $V$ be a projective algebraic surface, and $C'$ be a curve in $V$.
Let $f: V \rightarrow \mathbb{C}
\mathbb{P}^2$ be a generic projection. Let $S \subseteq \mathbb{C}
\mathbb{P}^2, S' \subset V$ be the branch
curve of $f$ and the corresponding ramification curve. Assume $S'$
intersects $C'$ at a point $\alpha'$. Let $C
= f(C')$ and $\alpha = f(\alpha')$. Assume that there exist
neighborhoods of $\alpha $ and $\alpha '$, such that
$f_{\mid_{S'}}$ and $f_{\mid_{C'}}$ are isomorphisms. Then $C$ is
tangent to $S$ at $\alpha $.
\end{lemma}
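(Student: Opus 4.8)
The plan is to reduce the statement to a local computation at $\alpha'$ in suitably chosen analytic coordinates, the crucial input being the classical fold normal form for a generic projection of a smooth surface. Away from its finitely many pinch points, such a projection is, in appropriate local coordinates, the fold map $(x,y)\mapsto(x,y^{2})$; and the hypothesis that $f|_{S'}$ is an isomorphism near $\alpha'$ says precisely that $\alpha'$ is not a pinch point, so this normal form is available at $\alpha'$ (working, as we may, in the smooth locus of $V$).

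Concretely, I would first fix local analytic coordinates $(x,y)$ on $V$ centered at $\alpha'$ and $(u,v)$ on $\mathbb{CP}^{2}$ centered at $\alpha$ in which $f$ is given by $(u,v)=(x,y^{2})$. In these coordinates the ramification locus, where the Jacobian of $f$ drops rank, is $S'=\{y=0\}$, and the branch curve, being the set of critical values, is $S=\{v=0\}$; in particular $T_{\alpha}S$ is the line spanned by $\partial_{u}$.

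Next I would examine the differential. One has $df_{(x,y)}=\operatorname{diag}(1,2y)$, so $df_{\alpha'}$ has rank one, with kernel $\langle\partial_{y}\rangle$ and image $\langle\partial_{u}\rangle=T_{\alpha}S$. Since $f|_{C'}$ is an isomorphism onto its image near $\alpha'$, its differential is injective on $T_{\alpha'}C'$; equivalently $T_{\alpha'}C'$ is a line different from $\ker df_{\alpha'}=\langle\partial_{y}\rangle$. Hence $df_{\alpha'}(T_{\alpha'}C')$ is a one-dimensional subspace of the image of $df_{\alpha'}$, and therefore equals $T_{\alpha}S$. On the other hand, $f|_{C'}$ being a local isomorphism forces $C=f(C')$ to be smooth at $\alpha$ with $T_{\alpha}C=df_{\alpha'}(T_{\alpha'}C')$. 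Combining these, $T_{\alpha}C=T_{\alpha}S$, which is exactly the assertion that $C$ is tangent to $S$ at $\alpha$.

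The step I expect to be the main obstacle is justifying the fold normal form itself: that a generic projection from a smooth projective surface has only ordinary folds along $S'$ outside finitely many pinch points, and that the hypothesis on $f|_{S'}$ excludes a pinch point at $\alpha'$. This rests on the structure theory of generic projections, developed in the algebraic setting in the Moishezon--Teicher references cited above (and in the differentiable setting going back to Whitney); once it is granted, the remainder is the short linear-algebra computation with $df_{\alpha'}$ above, together with the elementary observation that the critical values of $(x,y)\mapsto(x,y^{2})$ form exactly the $u$-axis.
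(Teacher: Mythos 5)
The paper does not prove this statement at all: Lemma \ref{lm:24} is quoted verbatim from Moishezon--Teicher \cite{MTIV} and used as a black box in the regeneration discussion, so there is no in-paper proof to compare yours against. Judged on its own, your argument is sound and is essentially the standard one for this lemma: at a point of $S'$ where $f_{\mid_{S'}}$ is a local isomorphism (hence an immersion), the generic projection is analytically a fold $(x,y)\mapsto(x,y^{2})$, and then $T_{\alpha}S=\operatorname{im}(df_{\alpha'})$, while the hypothesis on $f_{\mid_{C'}}$ forces $df_{\alpha'}$ to be injective on $T_{\alpha'}C'$, so $T_{\alpha}C=df_{\alpha'}(T_{\alpha'}C')=T_{\alpha}S$. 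That is exactly tangency, and the reduction to the fold normal form is legitimate background (it is part of the structure theory of generic projections that you cite).

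Two small points to tighten. First, the terminology: for projections to the plane the bad points excluded by the hypothesis are the Whitney cusp points of $f$ (where $\ker df$ is tangent to $S'$, so that $f_{\mid_{S'}}$ fails to be an immersion and $S$ acquires a cusp); ``pinch point'' properly refers to projections to $\mathbb{CP}^{3}$, so you should phrase the exclusion in terms of cusps of the map, which is literally what the isomorphism hypothesis on $f_{\mid_{S'}}$ rules out. Second, you implicitly assume $V$, $S'$ and $C'$ are smooth at $\alpha'$; this is part of the intended setting (and smoothness of $S'$ at a non-cusp point of a generic projection is standard), but it is worth saying explicitly since ``$C'$ a curve in $V$'' alone does not grant it. A slightly more robust variant of your final step, which avoids the differential altogether: in the fold coordinates write $C'=\{y=h(x)\}$ with $h(0)=0$ (possible precisely because $T_{\alpha'}C'\neq\ker df_{\alpha'}$); then $C=\{v=h(u)^{2}\}$ meets $S=\{v=0\}$ with multiplicity at least $2$ at $\alpha$, giving tangency directly and showing the intersection multiplicity as well.
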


We start with a regeneration of $2$-points. A diagonal line in a $2$-point is regenerated to a conic which is tangent to the horizontal or vertical line, as in Lemma \ref{lm:24} and Figure \ref{fig:regen2pt}.  In the partial regeneration we get a curve with a tangency and a branch point.  In order to complete the regeneration process, the line is doubled and the tangency is regenerated to three cusps.

\begin{figure}[h]
\begin{center}
\begin{tikzpicture}

\draw (-3,1) -- (-2,2) -- (-2,1);
\fill[color=black] (-2,2) circle (3pt);

\draw (-1,1.5) node {$\rightarrow$};

\draw (3,0) .. controls (0,1) and (0,2) .. (3,3);
\draw (3,3.5) -- (0,1.3);
\fill[color=black] (.75,1.5) circle (3pt);
%\draw (1,1.5) node {$2$};
\fill[color=black] (1.2,2.2) circle (3pt);
%\draw (1.45,1.95) node {$1$};

%\draw (3.25,0) node {$i$};
%\draw (3.25,3) node {$i'$};
%\draw (3.25,3.5) node {$j$};

\end{tikzpicture}
    \caption{Partial regeneration in a neighborhood of a 2-point.}
    \label{fig:regen2pt}
\end{center}
\end{figure}
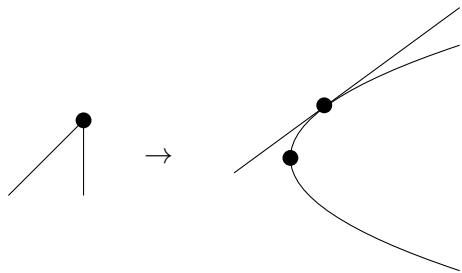

Now we study the regeneration of $3$-points.  The three intersecting lines can be horizontal, vertical, or diagonal, and one might suspect that any configuration is possible.  Due to the construction by squares, however, there can be no $3$-point with no diagonals and no $3$-point with all diagonals.  Thus there are two types of $3$-points, as in Figure \ref{fig:two3ptcases}.

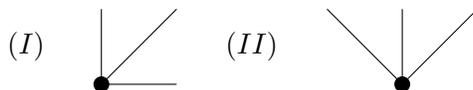
\begin{figure}[h]
\begin{center}
\begin{tikzpicture}

\draw (-1,0.5) node {$(I)$};

\draw (0,1) -- (0,0) -- (1,0);
\draw (0,0) -- (1,1);
\fill[color=black] (0,0) circle (3pt);

\draw (2,0.5) node {$(II)$};

\draw (3,1) -- (4,0) -- (5,1);
\draw (4,0) -- (4,1);
\fill[color=black] (4,0) circle (3pt);

\end{tikzpicture}
    \caption{The two types of a $3$-point.}
    \label{fig:two3ptcases}
\end{center}
\end{figure}

 For the first type, the regeneration is divided into steps. We explain each step in two
levels, first dealing with the surface and then with the branch curve.
At the surface level, each diagonal is replaced with a conic by a partial regeneration. Focusing on a
$3$-point, we have a partial regeneration of two of the planes to a quadric surface. We get one quadric and one plane,
which is tangent to the quadric. The plane and the quadric meet along two lines (one from each ruling of the
quadric). At the branch curve level, we have two double lines (coming from the intersection of the plane and the
quadric) and one conic (coming from the branching of the quadric over the plane). According to Lemma
\ref{lm:24}, the conic is tangent to each of the two double lines.
As far as the branch points go, one of the two branch points of the
conic is far away from the
$3$-point, and the other one is close to the $3$-point. See Figure \ref{fig:fig15}.%\ref{1reg1}.

\begin{figure}[h]
\begin{center}
\begin{tikzpicture}

\draw (-8,1) -- (-9,1) -- (-9,2);
\draw (-9,1) -- (-8,2);
\fill[color=black] (-9,1) circle (3pt);

\draw (-7,1.5) node {$\rightarrow$};

\draw[rotate around={15:(-3,3)}] (-3,3) arc (45:345:1.5 and .75);
\draw (-2.25,2.2) -- (-6,0);
\draw (-2.25,.75) -- (-3,.95) -- (-6,1.75);
\fill[color=black] (-5.35,1.7) circle (3pt);
\fill[color=black] (-4.9,1.45) circle (3pt);
\fill[color=black] (-3.95,1.2) circle (3pt);
\fill[color=black] (-3,1.75) circle (3pt);

\fill[color=white] (-3.5,3) rectangle (-2,4);

\end{tikzpicture}
    \caption{Partial regeneration in a neighborhood of the $3$-point of the first type.}
    \label{fig:fig15}
\end{center}
\end{figure}
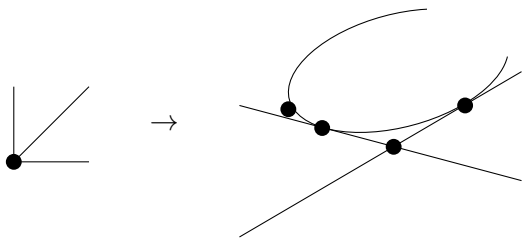

%\begin{figure}[ht]
%\epsfxsize=10cm %width
%\epsfysize=4cm %height
%\begin{minipage}{\textwidth}
%\begin{center}
%\epsfbox{1reg1.eps}
%\end{center}
%\end{minipage}
%\caption{Partial regeneration in a neighborhood of the $3$-point
%$1$}\label{1reg1}
%\end{figure}

In the next step of the regeneration, at the curve level, we use regeneration lemmas from \cite{MTII}. The two tangent points regenerate to three cusps each (giving a total of six) and the intersection point of the two
double lines gives eight more branch points. One can think of this as
first giving four nodes, then each node
giving two branch points. At the surface level it means that we get a
smooth surface which locally looks like a
cubic in $\mathbb{C}\mathbb{P}^3$ (degenerating to a triple of planes).

Now we study the regeneration of the second type of a $3$-point. One
of the diagonal lines regenerates to a
conic which is tangent to the vertical line. The vertical line and the
second diagonal line (which is still yet to be regenerated)  intersect at a $2$-point. In the second step of the
regeneration, the second diagonal line is
regenerated to a conic which is tangent to the vertical line, too.  See Figure \ref{fig:fig16}.

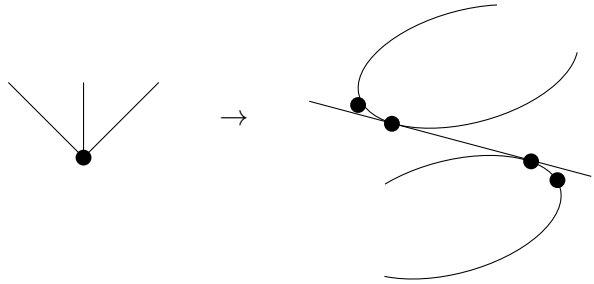
\begin{figure}[h]
\begin{center}
\begin{tikzpicture}

\draw (-10,2) -- (-9,1) -- (-8,2);
\draw (-9,1) -- (-9,2);
\fill[color=black] (-9,1) circle (3pt);

\draw (-7,1.5) node {$\rightarrow$};

\draw[rotate around={15:(-3,3)}] (-3,3) arc (45:345:1.5 and .75);
%\draw (-2.25,2.2) -- (-6,0);
\draw (-2.25,.75) -- (-3,.95) -- (-6,1.75);
\fill[color=black] (-5.35,1.7) circle (3pt);
\fill[color=black] (-4.9,1.45) circle (3pt);
%\fill[color=black] (-3.95,1.2) circle (3pt);
%\fill[color=black] (-3,1.75) circle (3pt);

\fill[color=white] (-3.5,3) rectangle (-2,4);

\draw[rotate around={15:(-5,-.58)}] (-5,-.58) arc (-135:165:1.5 and .75);
%ADD POINTS AND FIX WHITE BOX!
%---------------------------------------- HELP!
\fill[color=black] (-3.05,.95) circle (3pt);
\fill[color=black] (-2.7,.7) circle (3pt);
\fill[color=white] (-6,0) rectangle (-5,.75);

\end{tikzpicture}
    \caption{Partial regeneration in a neighborhood of the $3$-point of the second type.}
    \label{fig:fig16}
\end{center}
\end{figure}

In the last step, the vertical line is doubled and each tangency regenerates to three cusps. Note that the two conics intersect in four complex points (not shown).

Now we formulate the regenerated braid for a node and for a tangency using the regeneration rules from \cite{MTIV}.

%Now we study how to write the braids after regeneration. We use the regeneration rules from \cite{MTIV}.  %We formulate the braid for a branch point, %for a node, and for a tangency.
%We formulate the regenerated braid for a node and for a tangency.

%WE DON'T NEED IT
%\begin{theorem}\label{1rule}
%{\bf First regeneration rule \cite[p. 336]{MTIV}}\\
%A factor of the form $Z_{ij}$ regenerates to $Z_{ij'} Z_{i'j}$.
%\end{theorem}

Recall that the braid $Z^2_{ij}$ is a full-twist of $j$ around $i$ and the braid $Z^2_{i'j}$ is a full-twist of $j$ around $i'$. The braid $Z^2_{ii',j}$ is obtained by a regeneration (the point $i$ on the typical fiber is replaced by $i, i'$), and it is a full-twist of $j$ around $i$ and $i'$.

\begin{theorem}\label{2rule}
{\bf Regeneration rule for a node} (Moishezon-Teicher) \cite[p. 337]{MTIV}\\
%{\bf Second regeneration rule \cite[p. 337]{MTIV}}\\
A factor of the form $Z^2_{ij}$ regenerates to $Z^2_{ii',j},
Z^2_{i,jj'}$ or $Z^2_{ii',jj'}$.
\end{theorem}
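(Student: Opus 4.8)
The plan is to reduce the statement to a purely local computation near a single node and to match, case by case, the three possible regenerations of its two local branches.

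First I would recall that the braid monodromy factorization is assembled fiber by fiber: each singular point $p$ of the branch curve contributes the factor obtained by transporting the local vanishing data around a small loop encircling the image of $p$ on the base, and a node contributes exactly a full twist $Z^2_{ij}$ of the two colliding fiber points $i,j$. Since regeneration is a deformation of $S_0$ supported near its lines (a horizontal or vertical line is replaced, locally, by a doubled line $x^2=\epsilon$; a diagonal line is replaced by a conic which, away from its branch points, is again a pair of nearby smooth branches, by Lemma \ref{lm:24} and Figure \ref{fig:regen2pt}), it suffices to analyze a disc around the node, in which the curve acquires at most two small parameters. So I would put the node in the normal form $C=\{(x-y)(x+y)=0\}$ and project to the $y$-axis, with the two branches sitting at fiber positions $i$ and $j$; the image of the node is $y=0$, and circling it once rotates the points $x=\pm y$ around each other, giving the factor $Z^2_{ij}$.

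Next I would treat the three cases. (1) Only the $i$-branch is doubled: $C$ becomes $(x-y-\delta)(x-y+\delta)(x+y)=0$, with fiber positions $i,i'$ for the pair $x=y\pm\delta$ (at distance $O(\delta)$) and $j$ for $x=-y$. The parallel branches $x=y\pm\delta$ never meet, while $x=-y$ meets each of them, so the single node image $y=0$ splits into the two node images $y=\pm\delta/2$. Encircling both with one loop, and using that these images are $O(\delta)$-close, the monodromy is the (ordered) product of the two node factors, which is precisely the full twist of $j$ around the tight cluster $\{i,i'\}$, i.e.\ $Z^2_{ii',j}$; here I would invoke the standard combing identity of the pure braid group expressing a full twist of one strand around a consecutive pair as a product of the two individual full twists, compatible with the rewritings of Property \ref{property:node}. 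Case (2), doubling only the $j$-branch, is symmetric and yields $Z^2_{i,jj'}$. Case (3), doubling both branches, gives $x=y\pm\delta$ and $x=-y\pm\eta$: each $i$-branch meets each $j$-branch but the two $i$-branches (resp.\ $j$-branches) stay parallel, so $y=0$ splits into four clustered node images, and iterating the combing identity the combined monodromy is the full twist of the cluster $\{j,j'\}$ around the cluster $\{i,i'\}$, i.e.\ $Z^2_{ii',jj'}$.

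The main obstacle is rigor in the two soft steps: (a) that the local braid monodromy of the perturbed curve is genuinely the ordered product of the node factors attached to the new singular points, which needs the standard continuity statement for braid monodromy under an equisingular-away deformation, namely that as $\delta\to 0$ the large loop about $y=0$ deformation-retracts onto a bouquet of loops about $y=\pm\delta/2$ inside the complement of the new discriminant; and (b) that the two parallel branches produced by doubling a line contribute no crossing near the node, their tangency and branch point being pushed away from it as in Figure \ref{fig:regen2pt}, so that no spurious factor appears. One should also note the orientation: because the deformation is holomorphic, every new node is again positive, so the exponent stays $+2$ and we obtain $Z^2$ rather than $Z^{-2}$. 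Granting (a) and (b), the three cases above are exhaustive and establish the rule.
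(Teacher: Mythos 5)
This statement is not proved in the paper at all: it is quoted as an external result of Moishezon--Teicher (cited to \cite{MTIV}), so there is no internal proof to compare yours against. Your outline is, in substance, the standard argument behind that citation: reduce to a local model of the node, observe that doubling a branch (into a doubled line, or into the two local sheets of a conic whose branch point and tangencies are pushed away, cf.\ Lemma \ref{lm:24} and Figure \ref{fig:regen2pt}) splits the single critical value into two or four clustered critical values, and then identify the product of the resulting node factors with the full twist of one cluster around the other. The algebraic core of that last step is exactly what the paper records as Lemma \ref{lem:25}: case (1) is $Z^2_{ii',j}=Z^2_{i'j}Z^2_{ij}$, case (2) is $Z^2_{i,jj'}=Z^2_{ij'}Z^2_{ij}$ (item (b) with the roles adjusted), and case (3) is $Z^2_{ii',jj'}=Z^2_{i',jj'}Z^2_{i,jj'}$, so your ``combing identity'' is not an extra import but the identity already used throughout the paper. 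Your two flagged soft steps are the right ones to worry about, and they are standard: (a) is the usual stability of braid monodromy under a deformation that is equisingular away from the node, with the big loop retracting onto a bouquet of small loops around the new nodes; for (b) note also that the identification of the ordered product with $Z^2_{ii',j}$ (rather than a conjugate of it) depends on choosing the standard consecutive skeleta for the clustered points $i,i'$ (resp.\ $j,j'$) and on the Hurwitz/conjugation equivalence implicit in the phrase ``regenerates to,'' which is how \cite{MTIV} states the rule as well. With those caveats made explicit, your three cases are exhaustive and the argument is correct; it is essentially the proof in \cite{MTIV}, not a new route.
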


In the last case, the regeneration of a node can be depicted as in Figure \ref{fig:noderegeneration}.

\begin{figure}[h]
\begin{center}
\begin{tikzpicture}

\draw (-.5,0) -- (1.5,2);
\draw (1.5,0) -- (-.5,2);
\fill[color=black] (.5,1) circle (3pt);
\draw (2,0) node {$i$};
\draw (2,2) node {$j$};
%\draw (2,2) node {$i+1$};

\draw (3,1) node {$\rightarrow$};

\draw (4,.5) -- (6,2.5);
\draw (4,-.5) -- (6,1.5);
\draw (4,1.5) -- (6,-.5);
\draw (4,2.5) -- (6,.5);
\fill[color=black] (4.5,1) circle (3pt);
\fill[color=black] (5,1.5) circle (3pt);
\fill[color=black] (5,.5) circle (3pt);
\fill[color=black] (5.5,1) circle (3pt);
\draw (6.5,-.5) node {$i$};
\draw (6.5,.5) node {$i'$};
\draw (6.5,1.5) node {$j$};
\draw (6.5,2.5) node {$j'$};
%\draw (6.5,1.5) node {$i+1$};
%\draw (6.5,2.5) node {$i+1'$};

%\draw (4,0) -- (8,4);
%\draw (4,-2) -- (8,2);
%\draw (4,2) -- (8,-2);
%\draw (4,4) -- (8,0);
%\fill[color=black] (5,1) circle (3pt);
%\fill[color=black] (6,2) circle (3pt);
%\fill[color=black] (6,0) circle (3pt);
%\fill[color=black] (7,1) circle (3pt);
%\draw (8.5,-2) node {$i$};
%\draw (8.5,0) node {$i'$};
%\draw (8.5,2) node {$i+1$};
%\draw (8.5,4) node {$i+1'$};

\end{tikzpicture}
    \caption{Regeneration of a node.}
    \label{fig:noderegeneration}
\end{center}
\end{figure}
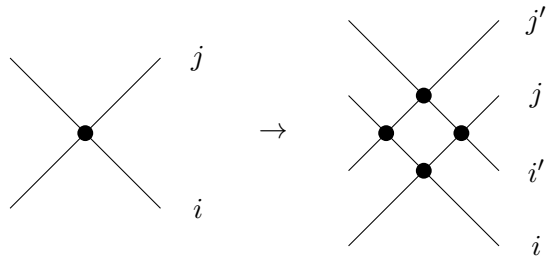

These three cases are explicitly given by (a), (b), and (f) of the following lemma:

\begin{lemma}
\label{lem:25} (Amram-Teicher) \cite[Lemma 10]{AT:TxT1} The following hold:
(a) $Z^2_{ii',j} = Z^2_{i'j} Z^2_{ij}$; \\
(b) $Z^{2}_{i',jj'} = Z^{2}_{i'j'} Z^{2}_{i'j}$;
(c) $Z^{-2}_{i',jj'}  = Z^{-2}_{i'j} Z^{-2} _{i'j'}$;
(d) $\bar{Z}^{-2}_{i',jj'} = \bar{Z}^{-2}_{i'j'} \bar{Z}^{-2} _{i'j}$;
(e) $Z^{-2}_{ii',j} = Z^{-2}_{ij} Z^{-2} _{i'j}$; \\
(f) $Z^2_{ii',jj'} = Z^2_{i',jj'} Z^2_{i,jj'}$;
(g) $Z^{-2}_{ii',jj'} = Z^{-2}_{i,jj'} Z^{-2}_{i',jj'}$.
\end{lemma}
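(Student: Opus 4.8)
The plan is to read every symbol in the statement through the geometric description recalled just before it: $Z^2_{ij}$ is the full twist of strand $j$ around strand $i$; $Z^2_{ii',j}$ is the full twist of $j$ around the two–strand cluster $\{i,i'\}$; $Z^2_{i,jj'}$ is the full twist of the cluster $\{j,j'\}$ around $i$; $Z^2_{ii',jj'}$ is the full twist of the cluster $\{j,j'\}$ around the cluster $\{i,i'\}$; and the barred symbols are the corresponding bands on the opposite side. With this dictionary each identity (a)--(g) is an instance of one elementary fact together with an ``inverse'' or ``mirror'' of it, namely: \emph{a loop that encircles a two–point cluster is homotopic, rel basepoint, to the concatenation of a loop around one of the two points followed by a loop around the other}, and dually \emph{carrying a two–point cluster along a loop equals carrying each of the two points along that loop in succession}. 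Since transporting a strand (or a cluster) along a concatenation of paths is the composition of the transports along the pieces, every cluster in the statement is replaced by a product of two ordinary full twists; the only things that are not automatic are the \emph{order} of the two resulting factors and, in (c)--(g), which of the two side–bands $Z$ or $\overline{Z}$ occurs.

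Concretely I would proceed as follows. First, reduce to a braid group on three or four strands: the $Z_\bullet$–notation is natural under the standard inclusions $B_3\hookrightarrow B_m$ and $B_4\hookrightarrow B_m$ and under simultaneous conjugation that moves the punctures $i,i',j,j'$ into consecutive positions --- this is exactly why Properties \ref{property:node} and \ref{property:highermult} express the symbols $Z_{ab}$ and $Z_{a\,a+1\,\ldots}$ as conjugates of single Artin generators. Hence it suffices to establish (a)--(e) in $B_3$, with the indices placed at $\{1,2,3\}$ (in the mirrored position for the barred identity (d)), and (f), (g) in $B_4$, with the indices placed at $\{1,2,3,4\}$. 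Then, in these small groups, write each symbol explicitly in the generators using Property \ref{property:node} --- for instance $Z_{13}=\sigma_2^{-1}\sigma_1\sigma_2$, $Z_{23}=\sigma_2$, and, by the same recipe, $Z_{14},Z_{24},Z_{34}$ in $B_4$ --- express the cluster twists $Z^2_{12,3}$, $Z^2_{2,34}$, $Z^2_{12,34}$ as explicit words (either from the geometric picture above, or from Property \ref{property:highermult} by treating a cluster as a block of equal strands), and verify each identity by repeated use of the braid relations $\sigma_k\sigma_{k+1}\sigma_k=\sigma_{k+1}\sigma_k\sigma_{k+1}$ and $\sigma_k\sigma_\ell=\sigma_\ell\sigma_k$ for $|k-\ell|\ge 2$. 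The inverse identities (c), (e), (g) then follow formally by inverting (b), (a), (f), and (d) follows from (b) by the mirror symmetry $\sigma_k\leftrightarrow\sigma_k^{-1}$ together with $Z\leftrightarrow\overline{Z}$, which reverses the order of the two factors. As a calibration one checks $Z^2_{12,3}=\sigma_2\sigma_1^2\sigma_2=\sigma_2^2\cdot(\sigma_2^{-1}\sigma_1^2\sigma_2)=Z^2_{23}\,Z^2_{13}$, which is precisely statement (a) for $(i,i',j)=(1,2,3)$.

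I expect the only genuine obstacle to be bookkeeping: fixing, once and for all, a consistent convention for the direction in which each loop is traversed and for the left–to–right order in which braids are composed in the monodromy algorithm, so as to determine in each of (a)--(g) whether the decomposition yields $Z^2_{i'j}Z^2_{ij}$ or $Z^2_{ij}Z^2_{i'j}$ (respectively the barred analogues), and to keep straight whether a band passes above or below so that the correct one of $Z$, $\overline{Z}$ appears. I would nail this down by drawing a single explicit picture of the three– (and four–) punctured disk, reading all the factor orders off that picture, and verifying the two base cases there; naturality then transports every identity verbatim to $B_m$.
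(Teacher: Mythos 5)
First, a point of comparison: the paper offers no proof of Lemma \ref{lem:25} at all --- it is imported verbatim from \cite[Lemma 10]{AT:TxT1} --- so there is no in-paper argument to measure yours against. Judged on its own, your plan is sound and is the natural one: read each cluster symbol as a full twist of one group of punctures around another, reduce by conjugation and the standard inclusions to $B_3$ (for (a)--(e)) and $B_4$ (for (f), (g)), expand everything in Artin generators via Property \ref{property:node}, verify the three ``positive'' identities by the braid relations, and obtain (c), (e), (g) formally by inversion. Your calibration $Z^2_{12,3}=\sigma_2\sigma_1^2\sigma_2=Z^2_{23}Z^2_{13}$ is consistent with the conventions the paper actually uses downstream (e.g.\ $Z^2_{13}=\sigma_2^{-1}\sigma_1^2\sigma_2$ in Proposition \ref{prop:conicline} and the expansion of $Z^2_{i\,i'\,,j\,j'}$ inside Lemma \ref{lem:fournodes}), so the factor orders you would read off a picture agree with the statement. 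Bear in mind, though, that what you have written is a strategy: the finite $B_3$/$B_4$ computations for (a), (b), (f), including the explicit words for the cluster twists $Z^2_{12,3}$, $Z^2_{2,34}$, $Z^2_{12,34}$, still need to be carried out, although nothing in them can fail.

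One detail, as stated, would come out wrong. You claim (d) follows from (b) by the mirror $\sigma_k\leftrightarrow\sigma_k^{-1}$ together with $Z\leftrightarrow\overline{Z}$, ``which reverses the order of the two factors.'' The crossing-reversal map $\sigma_k\mapsto\sigma_k^{-1}$ is an order-preserving automorphism of $B_m$ (it is inversion composed with word reversal), and it sends $Z_{ab}\mapsto\overline{Z}_{ab}^{-1}$; applied to (b) it yields exactly $\bar{Z}^{-2}_{i',jj'}=\bar{Z}^{-2}_{i'j'}\bar{Z}^{-2}_{i'j}$, i.e.\ statement (d) with the factor order \emph{unchanged}. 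An operation that inverts the generators and also reverses the order of factors is simply $\beta\mapsto\beta^{-1}$, which produces (c), not (d); and if you reverse the order in (d) you get $\bar{Z}^{-2}_{i'j}\bar{Z}^{-2}_{i'j'}$, whose factors do not commute in general. Since you explicitly defer all order bookkeeping to a single explicit picture, this is a correctable slip rather than a structural flaw, but it is precisely the kind of order/side error this lemma is sensitive to, so fix the derivation of (d) (or verify it directly in $B_3$ alongside (b)).
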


Recall that a tangency is regenerated into three cusps.

\begin{theorem}\label{3rule}
{\bf Regeneration rule for a tangency} (Moishezon-Teicher) \cite[p. 337]{MTIV}\\
%{\bf Third regeneration rule \cite[p. 337]{MTIV}}\\
A factor of the form $Z^4_{ij}$ regenerates to $Z^{3}_{i,jj'} = (Z^3_{ij})^{Z_{jj'}} \cdot (Z^3_{ij}) \cdot
(Z^3_{ij})^{{Z_{jj'}}^{-1}}$ or to \\
$Z^{3}_{ii',j}=(Z^3_{ij})^{Z_{ii'}}  \cdot (Z^3_{ij}) \cdot (Z^3_{ij})^{{Z_{ii'}}^{-1}}$.
\end{theorem}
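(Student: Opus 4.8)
The plan is to read off the factorization from the local geometry of the branch curve near the tangency, together with the known braid monodromy of a single cusp, in exact parallel with the node rule of Theorem~\ref{2rule}. First recall that a cusp of the branch curve --- locally the curve $\{x^{3}=y^{2}\}$ --- contributes a braid monodromy factor $Z^{3}_{k\ell}$, where $k,\ell$ are the two strands colliding over the cuspidal value; this is the standard local computation and is the reason the associated exponent of a cusp is $3$. Likewise a tangency is encoded by the fourth power $Z^{4}_{ij}$ of the half-twist exchanging the two strands $i,j$ that meet the fiber (exponent $4$).

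Next I would set up the regeneration. In the first alternative the deformation replaces the branch labelled $j$ by a nearby pair $j,j'$, that is, one sheet of the branch curve splits in two; by the regeneration lemmas of \cite{MTII} (and Lemma~\ref{lm:24} for why the tangency persists) the single tangency value breaks into a tight cluster of three cuspidal values on the real axis. The central value corresponds to $i$ colliding with the ``centre'' of $\{j,j'\}$ and contributes $Z^{3}_{ij}$, with $j$ standing for the pair viewed as a single nearby point --- this is exactly the meaning of the symbol $Z^{3}_{i,jj'}$ in the statement. The two outer values come from $i$ colliding with $j$ and with $j'$ separately; tracing the associated vanishing paths back to the standard skeleton identifies these two factors as the central one conjugated by the half-twist $Z_{jj'}$ exchanging the new strands, one by $Z_{jj'}$ and the other by $Z_{jj'}^{-1}$, according to the side of the cluster on which the cusp lies. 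Reading the three factors in the order induced on the little loops around the three values then gives
$$(Z^{3}_{ij})^{Z_{jj'}}\cdot(Z^{3}_{ij})\cdot(Z^{3}_{ij})^{Z_{jj'}^{-1}},$$
which is the claimed $Z^{3}_{i,jj'}$. The second alternative is the mirror situation in which the branch $i$ splits to $i,i'$; the same argument with the two strands interchanged yields $(Z^{3}_{ij})^{Z_{ii'}}\cdot(Z^{3}_{ij})\cdot(Z^{3}_{ij})^{Z_{ii'}^{-1}}=Z^{3}_{ii',j}$.

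The real work, and hence the main obstacle, is the bookkeeping of vanishing paths during the deformation: one must verify that the conjugating element is precisely the short half-twist $Z_{jj'}$ rather than some longer band generator that also moves strand $i$, and that the three cuspidal loops are encountered in the order placing the unconjugated factor in the middle. This is the delicate local analysis carried out by Moishezon--Teicher, so for the present paper it suffices to invoke \cite[p.~337]{MTIV} for the statement itself; one may then check internal consistency with the braid relations and Property~\ref{property:node} --- for instance that each alternative is a positive braid of exponent $9$, matching the three-cusp count --- and use Lemma~\ref{lem:25} to pass freely between these tangency-level factors and the node-level regenerated factors in the computations of Sections~\ref{sec:Main} and \ref{sec:examples}.
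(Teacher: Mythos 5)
The paper offers no proof of this statement at all: it is quoted as a result of Moishezon--Teicher with the citation \cite[p.~337]{MTIV}, which is exactly where your proposal also lands, since you defer the vanishing-path bookkeeping to that reference. Your preliminary sketch (the tangency value splitting into three cuspidal values, the middle factor unconjugated and the outer ones conjugated by the short half-twist $Z_{jj'}$, respectively $Z_{ii'}$) is a faithful outline of the mechanism in \cite{MTIV}, so in substance your treatment coincides with the paper's.
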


Next we consider $2$- or $3$-points that appear in degenerations of surfaces.

\subsection{Degeneration and regeneration of $2$-points}
\label{subsec:41}

$2$-points appear in degenerations as intersections of two lines. We can find them in degenerations of Hirzebruch
surfaces, in a self-product of the projective line, in a product of the projective line with a complex torus, or
in some toric varieties. In  Figure \ref{fig:surface2pts}, for example, we show a degeneration of the surface
$\mathbb{CP}^1 \times \mathbb{CP}^1$, embedded by the bi-linear system (1,2). The black vertices in the figure
are $2$-points.

\begin{figure}[h]
\begin{center}
\begin{tikzpicture}

\draw (0,0) -- (4,0) -- (4,2) -- (0,2) -- cycle;
\draw (0,0) -- (2,2) -- (2,0) -- (4,2);

%\draw (0,0) -- (2,0) -- (2,1) -- (0,1) -- cycle;
%\draw (0,0) -- (1,1) -- (1,0) -- (2,1);

%\draw (0,-.5) node {$1$};
%\draw (1,-.5) node {$2$};
%\draw (2,-.5) node {$3$};
%\draw (0,1.5) node {$4$};
%\draw (1,1.5) node {$5$};
%\draw (2,1.5) node {$6$};

\fill[color=black] (2,0) circle (3pt);
\fill[color=black] (2,2) circle (3pt);

%\fill[color=black] (1,0) circle (3pt);
%\fill[color=black] (1,1) circle (3pt);

\end{tikzpicture}
    \caption{The $2$-points in the surface $\mathbb{CP}^1 \times \mathbb{CP}^1$.}
    \label{fig:surface2pts}
\end{center}
\end{figure}
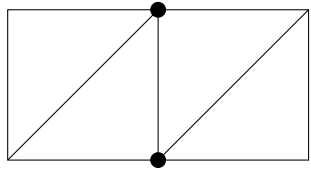

The ordering on the degenerated surface is determined by an ordering on the vertices.  There are several ways of
doing this, but the most convenient one is the lexicographic ordering following work by Moishezon and Teicher.
  Consider vertices $v_1=(x_1,y_1)$ and $v_2=(x_2,y_2)$. Then $v_1<v_2$ if
and only if $y_1<y_2$ or $y_1=y_2$ and $x_1<x_2$.  That is, enumerate vertices starting from the lower left corner proceeding to the right and then continuing upwards.
  Consider lines $L_1$ through vertices $u_1$ and $v_1$ and
$L_2$ through vertices $u_2$ and $v_2$ under the condition that $u_1<v_1$ and $u_2<v_2$.  Then $L_1<L_2$ if and
only if $v_1<v_2$ or $v_1=v_2$ and $u_1<u_2$. For example, the vertices and the lines of Figure \ref{fig:surface2pts}
can be enumerated as in Figure \ref{fig:surface2ptsnumbered}.

\begin{figure}[h]
\begin{center}
\begin{tikzpicture}

\draw (0,0) -- (4,0) -- (4,2) -- (0,2) -- cycle;
 \draw (0,0) -- (2,2) -- (2,0) -- (4,2);
 \draw (0,-.5) node {$v_1$};
 \draw (2,-.5) node {$v_2$};
  \draw (4,-.5) node {$v_3$};
   \draw (0,2.5) node {$v_4$};
    \draw (2,2.5) node {$v_5$};
\draw (4,2.5) node {$v_6$};

 \draw (1.3,1) node {$e_1$};
 \draw (2.25,1) node {$e_2$};
 \draw (3.3,1) node {$e_3$};

%\fill[color=black] (0,0) circle (3pt);
\fill[color=black] (2,0) circle (3pt);
%\fill[color=black] (4,0) circle (3pt);
%\fill[color=black] (0,2) circle (3pt);
\fill[color=black] (2,2) circle (3pt);
%\fill[color=black] (4,2) circle (3pt);

\end{tikzpicture}
    \caption{Enumerations of lines and vertices.}
    \label{fig:surface2ptsnumbered}
\end{center}
\end{figure}
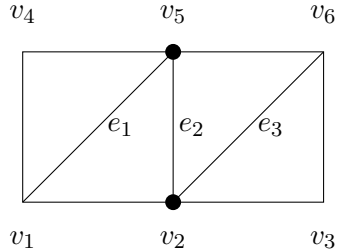

Choose a $2$-point arbitrarily. By the regeneration lemmas in \cite{MTII}, the diagonal line is regenerated  to a
conic which is tangent to the line as in Figure \ref{fig:2ptcasesAlgGeom}.  This figure shows the two possibilities of partial regeneration, depending on the ordering of the components.

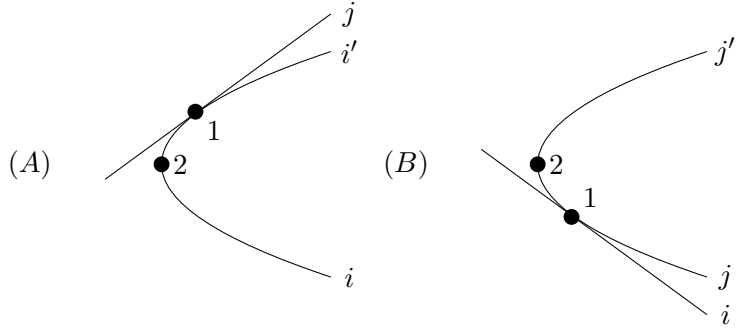
\begin{figure}[h]
\begin{center}
\begin{tikzpicture}

\draw (-1,1.5) node {$(A)$};

\draw (3,0) .. controls (0,1) and (0,2) .. (3,3);
\draw (3,3.5) -- (0,1.3);
\fill[color=black] (.75,1.5) circle (3pt);
\draw (1,1.5) node {$2$};
\fill[color=black] (1.2,2.2) circle (3pt);
\draw (1.45,1.95) node {$1$};

\draw (3.25,0) node {$i$};
\draw (3.25,3) node {$i'$};
\draw (3.25,3.5) node {$j$};

\draw (4,1.5) node {$(B)$};

\draw (8,0) .. controls (5,1) and (5,2) .. (8,3);
\draw (8,-.5) -- (5,1.7);
\fill[color=black] (5.75,1.5) circle (3pt);
\draw (6,1.5) node {$2$};
\fill[color=black] (6.2,.8) circle (3pt);
\draw (6.45,1.05) node {$1$};

\draw (8.25,-.5) node {$i$};
\draw (8.25,0) node {$j$};
\draw (8.25,3) node {$j'$};

\end{tikzpicture}
    \caption{Two cases of partial degeneration of a $2$-point.}
    \label{fig:2ptcasesAlgGeom}
\end{center}
\end{figure}

\begin{remark}
Observe that the possibility in Figure \ref{fig:2ptcasesAlgGeom} (A) occurs as the vertex labelled $v_5$ in Figure \ref{fig:surface2ptsnumbered} and the possibility in Figure \ref{fig:2ptcasesAlgGeom} (B) occurs as the vertex labelled $v_2$.
\end{remark}

In order to complete the regeneration process, each tangent line is regenerated to two parallel lines, and the tangency is replaced by three cusps (following the regeneration rules of \cite{MTIV}).

By the Moishezon-Teicher table of monodromy, we can compute the braids which are related to the singularities in Figure \ref{fig:2ptcasesAlgGeom},
and then by the above rule we get a curve of degree $4$ with three cusps and a branch point of a conic.
Figure \ref{fig:2ptcases} (A1) and (B1) are related to the three cusps which we get from the tangency in Figure \ref{fig:2ptcasesAlgGeom} (A) and (B), respectively.  Figure \ref{fig:2ptcases} (A2) and (B2) are related to the branch point of the conic in Figure \ref{fig:2ptcasesAlgGeom} (A) and (B), respectively.

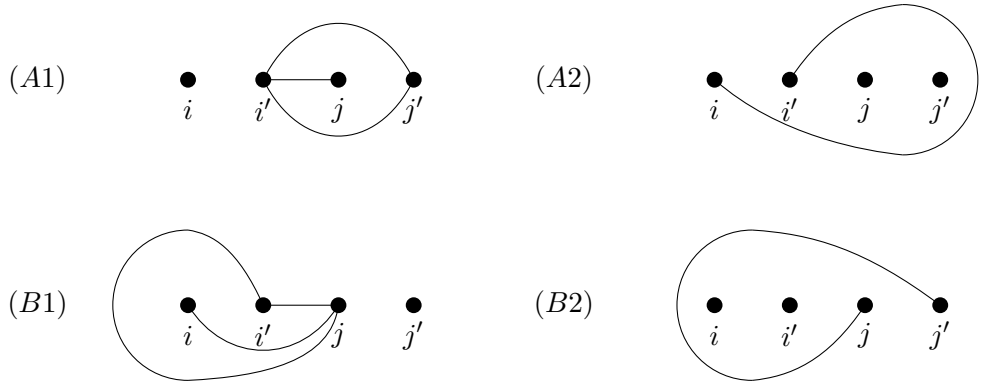
\begin{figure}[h]
\begin{center}
\begin{tikzpicture}

\draw (-2,0) node {$(A1)$};

\fill[color=black] (0,0) circle (3pt);
\fill[color=black] (1,0) circle (3pt);
\fill[color=black] (2,0) circle (3pt);
\fill[color=black] (3,0) circle (3pt);
\draw (0,-0.4) node {$i$};
\draw (1,-0.4) node {$i'$};
\draw (2,-0.4) node {$j$};
\draw (3,-0.4) node {$j'$};

\draw (1,0) -- (2,0);
\draw (1,0) .. controls (1.5,1) and (2.5,1) .. (3,0);
\draw (1,0) .. controls (1.5,-1) and (2.5,- 1) .. (3,0);

\draw (5,0) node {$(A2)$};

\fill[color=black] (7,0) circle (3pt);
\fill[color=black] (8,0) circle (3pt);
\fill[color=black] (9,0) circle (3pt);
\fill[color=black] (10,0) circle (3pt);
\draw (7,-0.4) node {$i$};
\draw (8,-0.4) node {$i'$};
\draw (9,-0.4) node {$j$};
\draw (10,-0.4) node {$j'$};

\draw (9.5,-1) arc (-90:90:1);
\draw (8,0) .. controls (8.5,.75) and (9,.95) .. (9.5,1);
\draw (7,0) .. controls (7.85,-.75) and (9,-.95) .. (9.5,-1);

\draw (-2,-3) node {$(B1)$};

\fill[color=black] (0,-3) circle (3pt);
\fill[color=black] (1,-3) circle (3pt);
\fill[color=black] (2,-3) circle (3pt);
\fill[color=black] (3,-3) circle (3pt);
\draw (0,-3.4) node {$i$};
\draw (1,-3.4) node {$i'$};
\draw (2,-3.4) node {$j$};
\draw (3,-3.4) node {$j'$};

\draw (1,-3) -- (2,-3);
\draw (0,-2) arc (90:270:1);
\draw (0,-2) .. controls (.33,-2.05) and (.66,-2.25) .. (1,-3);
\draw (0,-4) .. controls (1,-3.95) and (1.9,-3.75) .. (2,-3);
\draw (0,-3) .. controls (0.5,-3.8) and (1.5,-3.8) .. (2,-3);

\draw (5,-3) node {$(B2)$};

\fill[color=black] (7,-3) circle (3pt);
\fill[color=black] (8,-3) circle (3pt);
\fill[color=black] (9,-3) circle (3pt);
\fill[color=black] (10,-3) circle (3pt);
\draw (7,-3.4) node {$i$};
\draw (8,-3.4) node {$i'$};
\draw (9,-3.4) node {$j$};
\draw (10,-3.4) node {$j'$};

\draw (7.5,-2) arc (90:270:1);
\draw (7.5,-2) .. controls (8.3,-2.05) and (9,-2.25) .. (10,-3);
\draw (7.5,-4) .. controls (8,-3.95) and (8.5,-3.75) .. (9,-3);

%\draw (6,-3) .. controls (6.5,-2.25) and (7,-2.05) .. (7.5,-2);
%\draw (5,-3) .. controls (5.85,-3.75) and (7,-3.95) .. (7.5,-4);

\end{tikzpicture}
    \caption{Braids related to $2$-point cases with $j=i+1$.}
    \label{fig:2ptcases}
\end{center}
\end{figure}

\subsection{Degeneration and regeneration of $3$-points}
\label{subsec:42}

A $3$-point in a degeneration is an intersection of three lines.  We consider two types as in Figure \ref{fig:two3ptcases}:  one that regenerates into two lines and a conic and the other that regenerates into one line and two conics.  We consider each type at a time, demonstrating what orderings of the three lines are possible and showing examples.

%------------------------------------TOOK OUT FIGURE:  {fig:two3ptcases}

Let us consider a 3-point of the first type in a degenerated surface as in Figure \ref{fig:two3ptcases} (I).

\begin{proposition}
Given a 3-point of the first type, any ordering of the three lines $i$, $j$, and $k$ can be obtained by the lexicographic ordering of the vertices in the degenerated surface, as shown in Figure \ref{fig:3ptcasesTypeIordering}.
\end{proposition}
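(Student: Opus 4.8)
The plan is to prove this by exhibiting, for each ordering pictured in Figure \ref{fig:3ptcasesTypeIordering}, an explicit square--triangle tiling with lexicographically ordered vertices in which a $3$-point of the first type appears with its three incident lines in that order, and then verifying the induced order directly from the definition of the ordering. So the argument is really a small case analysis followed by a check.

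First I would fix the local shape of a $3$-point $p$ of the first type. Exactly three edges meet at $p$: one horizontal, one vertical, and one diagonal. Since a unit-square diagonal has no interior lattice point, the diagonal edge at $p$ is the full diagonal of a square $Q$ having $p$ as a corner, and the two sides of $Q$ meeting $p$ are initial segments of the horizontal line $H$ and the vertical line $V$ through $p$. Consequently $p$ is an endpoint of each of $H$, $V$, and the diagonal line $D$, and $D$ leaves $p$ in the same vertical direction as $V$ and in the same horizontal direction as $H$. Hence there are only four local pictures, according to which corner of $Q$ is $p$.

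Next I would read off the line order. Recall that a line is ordered by the pair (larger endpoint, smaller endpoint), compared lexicographically in the vertex order, and that the vertex order is lexicographic in $(y,x)$. Because $p$ is an endpoint of each of the three lines, a direct check shows that for $L\in\{V,D\}$ one has $L<H$ precisely when $L$ leaves $p$ downward and $L>H$ precisely when $L$ leaves $p$ upward (the only subtlety is the tie-break when $H$ leaves $p$ to the left, where one compares the smaller endpoints). The relative order of the remaining pair $\{V,D\}$ is then governed by how far $V$ and $D$ extend from $p$ along collinear square--edges, and running through the four local pictures with the two choices of extension produces each of the orderings drawn in Figure \ref{fig:3ptcasesTypeIordering}; in particular the diagonal line $D$, which is the one that will regenerate to a conic, can be placed first, in the middle, or last. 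For every such local picture one then produces a small disc tiled by squares and triangles that contains it --- this is what the figure records --- and checks it extends to an honest degeneration (of a Hirzebruch surface, a product of projective lines, or similar), so that no extra vertex of higher multiplicity is introduced.

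The hard part will be the bookkeeping rather than any idea: one must make sure each candidate picture really is a $3$-point, i.e. exactly three edges at $p$, so that no neighbouring square contributes an aligned diagonal or a stray edge there; and, for the sub-cases needing $V$ or $D$ to run through more than one square, one must verify that the chosen tiling is globally consistent. Once the local configurations are laid out as in Figure \ref{fig:3ptcasesTypeIordering}, the order computation in each case is immediate from the definition of the lexicographic ordering.
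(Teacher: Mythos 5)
Your proposal is correct and is essentially the paper's own proof: the paper simply assigns coordinates $P=(x_1,y_1)$, $Q=(x_2,y_1)$, $R=(x_1,y_2)$, $S=(x_2,y_2)$ to the corners of the square and reads off the induced line order for the three corner placements of the $3$-point, which is exactly your case analysis over which corner of the square $p$ occupies, combined with the definition of the lexicographic order on vertices and lines. The extra care you budget for --- the ``two choices of extension'' of $V$ and $D$ and the verification that each local picture embeds in an honest global degeneration --- is not needed at the level of the paper's argument, since each line of $S_0$ is a single edge joining adjacent vertices (so the relative order of $V$ and $D$ is determined by the corner alone), and realizability in actual degenerated surfaces is handled afterwards by the explicit examples (the pillow degeneration and its alternative orderings).
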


\begin{figure}[h]
\begin{center}
\begin{tikzpicture}

\draw (-1,0.5) node {$(A)$};

\draw (0,1) -- (0,0) -- (1,0);
\draw (0,0) -- (1,1);
\draw (-.25,0.5) node {$k$};
\draw (0.75,-.25) node {$i$};
\draw (0.75,0.5) node {$j$};
\fill[color=black] (0,0) circle (3pt);
%\fill[color=black] (1,0) circle (3pt);
%\fill[color=black] (0,1) circle (3pt);
%\fill[color=black] (1,1) circle (3pt);
\draw (-.25,-.25) node {$P$};
\draw (1.25,-.25) node {$Q$};
\draw (-.25,1.25) node {$R$};
\draw (1.25,1.25) node {$S$};

\draw (2,0.5) node {$(B)$};

\draw[shift={(3,0)}] (0,0) -- (0,1) -- (1,1);
\draw[shift={(3,0)}] (1,0) -- (0,1);
\draw[shift={(3,0)}] (-.25,0.5) node {$i$};
\draw[shift={(3,0)}] (0.75,1.25) node {$k$};
\draw[shift={(3,0)}] (0.75,0.5) node {$j$};
%\fill[shift={(3,0)}, color=black] (0,0) circle (3pt);
%\fill[shift={(3,0)}, color=black] (1,0) circle (3pt);
\fill[shift={(3,0)}, color=black] (0,1) circle (3pt);
%\fill[shift={(3,0)}, color=black] (1,1) circle (3pt);
\draw[shift={(3,0)}] (-.25,-.25) node {$P$};
\draw[shift={(3,0)}] (1.25,-.25) node {$Q$};
\draw[shift={(3,0)}] (-.25,1.25) node {$R$};
\draw[shift={(3,0)}] (1.25,1.25) node {$S$};

\draw (5,0.5) node {$(C)$};

\draw[shift={(6,0)}] (1,0) -- (1,1) -- (0,1);
\draw[shift={(6,0)}] (0,0) -- (1,1);
\draw[shift={(6,0)}] (1.25,0.5) node {$k$};
\draw[shift={(6,0)}] (0.75,1.25) node {$i$};
\draw[shift={(6,0)}] (0.75,0.5) node {$j$};
%\fill[shift={(6,0)}, color=black] (0,0) circle (3pt);
%\fill[shift={(6,0)}, color=black] (0,1) circle (3pt);
%\fill[shift={(6,0)}, color=black] (1,0) circle (3pt);
\fill[shift={(6,0)}, color=black] (1,1) circle (3pt);
\draw[shift={(6,0)}] (-.25,-.25) node {$P$};
\draw[shift={(6,0)}] (1.25,-.25) node {$Q$};
\draw[shift={(6,0)}] (-.25,1.25) node {$R$};
\draw[shift={(6,0)}] (1.25,1.25) node {$S$};

\end{tikzpicture}
    \caption{Possible orderings of edges around a $3$-point of the first type.}
    \label{fig:3ptcasesTypeIordering}
\end{center}
\end{figure}
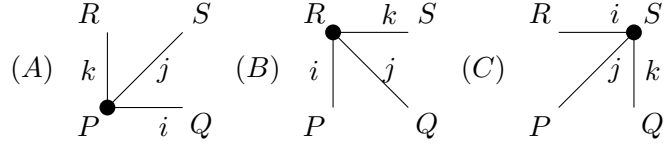

\begin{proof}
Following Figure \ref{fig:3ptcasesTypeIordering}, we assign coordinates to the vertices $P=(x_1,y_1)$, $Q=(x_2,y_1)$, $R=(x_1,y_2)$, and $S=(x_2,y_2)$ with $x_1<x_2$ and $y_1<y_2$.

In Figure \ref{fig:3ptcasesTypeIordering} (A), this gives the ordering of the lines as $i<k<j$, in Figure \ref{fig:3ptcasesTypeIordering} (B), this gives $i<j<k$, and in Figure \ref{fig:3ptcasesTypeIordering} (C) this gives $j<k<i$.
\end{proof}

\begin{example}
Consider the ``$(2,2)$-pillow degeneration'' of a $K3$ surface of degree $16$ which is embedded in $\mathbb{CP}^9$, appearing in \cite{ACMT} and shown in Figure \ref{fig:pillow}.  The boundaries of the two pieces are identified with the left piece on top and right piece on bottom.  Observe that the ordering of the vertices is not purely lexicographic because of identifications of the edges giving non-planarity.  Here the boundary points $v_1$ to $v_8$ are labelled \emph{in this order} using the lexicographic ordering while the interior point on the top is $v_9$ and the bottom is $v_{10}$.  this gives $v_1<\ldots<v_8<v_9<v_{10}$.  The corner vertices $v_1$, $v_3$, $v_6$, and $v_8$ are the $3$-points that appear as Figure \ref{fig:3ptcasesTypeIordering} (A).

\begin{figure}[h]
\begin{center}
\begin{tikzpicture}

\draw (0,0) -- (4,0) -- (4,2) -- (2,0) -- (2,2) -- (0,0) -- (0,2) -- (4,2) -- (4,4) -- (0,4) -- (0,2) -- (2,4) -- (2,2) -- (4,4);

\fill[color=black] (0,0) circle (3pt);
\fill[color=black] (4,0) circle (3pt);
\fill[color=black] (0,4) circle (3pt);
\fill[color=black] (4,4) circle (3pt);

\draw (-.25,-.25) node {$v_6$};
\draw (2,-.25) node {$v_7$};
\draw (4.25,-.25) node {$v_8$};

\draw (-.25,2) node {$v_4$};
\draw (1.75,2.25) node {$v_9$};
\draw (4.25,2) node {$v_5$};

\draw (-.25,4.25) node {$v_1$};
\draw (2,4.25) node {$v_2$};
\draw (4.25,4.25) node {$v_3$};

\draw (10,0) -- (6,0) -- (6,2) -- (8,0) -- (8,2) -- (10,0) -- (10,2) -- (6,2) -- (6,4) -- (10,4) -- (10,2) -- (8,4) -- (8,2) -- (6,4);

\fill[color=black] (6,0) circle (3pt);
\fill[color=black] (10,0) circle (3pt);
\fill[color=black] (6,4) circle (3pt);
\fill[color=black] (10,4) circle (3pt);

\draw (5.75,-.25) node {$v_6$};
\draw (8,-.25) node {$v_7$};
\draw (10.25,-.25) node {$v_8$};

\draw (5.75,2) node {$v_4$};
\draw (8.45,2.25) node {$v_{10}$};
\draw (10.25,2) node {$v_5$};

\draw (5.75,4.25) node {$v_1$};
\draw (8,4.25) node {$v_2$};
\draw (10.25,4.25) node {$v_3$};

\end{tikzpicture}
  \caption{The $(2,2)$-pillow degeneration.}
  \label{fig:pillow}
\end{center}
\end{figure}
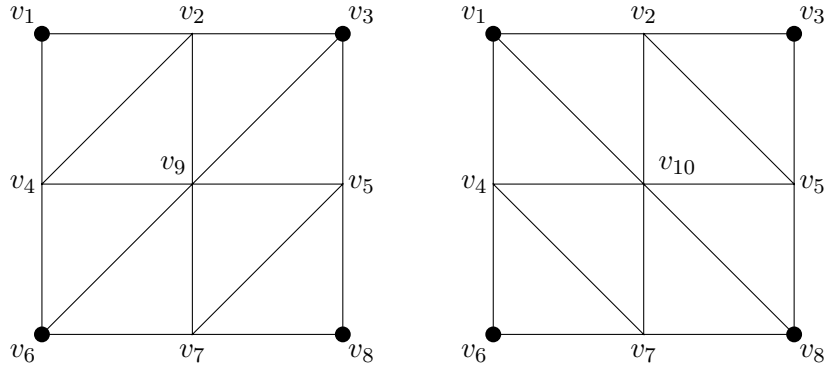

Next we consider another ordering of the same pillow degeneration given in \cite{CMT}.  Here the boundary points $v_1$ to $v_8$ are ordered \emph{clockwise} $v_1<v_2<v_3<v_5<v_8<v_7<v_6<v_4$.  However, the four corner vertices are the same $3$-points that appear in Figure \ref{fig:3ptcasesTypeIordering} (A).

Now consider the complete lexicographic ordering on the first piece that starts from the bottom left corner:  $v_6<v_7<v_8<v_4<v_9<v_5<v_1<v_2<v_3$ followed by $v_{10}$ on the other piece.  Contrasting with the earlier two orderings, only $v_1$ and $v_8$ appear in Figure \ref{fig:3ptcasesTypeIordering} (A) while $v_6$ appears in Figure \ref{fig:3ptcasesTypeIordering} (B) and $v_3$ appears in Figure \ref{fig:3ptcasesTypeIordering} (C)!
\end{example}

%\begin{figure}[h]%!b]
%  \begin{center}
%    \includegraphics[scale=0.75]{top.eps}
%  \end{center}
%
%  \caption{The $(2,2)$-pillow degeneration.}
%  \label{fig-pillow}
%\end{figure}

In Figure \ref{fig:3ptcases} we present the three cases of the first type of $3$-point for three arbitrary indices $i, j, k$, where $i<j<k$.
\begin{figure}[h]
\begin{center}
\begin{tikzpicture}

\draw (-1,0.5) node {$(A)$};

\draw (0,1) -- (0,0) -- (1,0);
\draw (0,0) -- (1,1);
\draw (-.25,0.5) node {$i$};
\draw (0.75,-.25) node {$j$};
\draw (0.75,0.5) node {$k$};
\fill[color=black] (0,0) circle (3pt);

\draw (2,0.5) node {$(B)$};

\draw[shift={(3,0)}] (0,1) -- (0,0) -- (1,0);
\draw[shift={(3,0)}] (0,0) -- (1,1);
\draw[shift={(3,0)}] (-.25,0.5) node {$k$};
\draw[shift={(3,0)}] (0.75,-.25) node {$i$};
\draw[shift={(3,0)}] (0.75,0.5) node {$j$};
\fill[shift={(3,0)}, color=black] (0,0) circle (3pt);

\draw (5,0.5) node {$(C)$};

\draw[shift={(6,0)}] (0,1) -- (0,0) -- (1,0);
\draw[shift={(6,0)}] (0,0) -- (1,1);
\draw[shift={(6,0)}] (-.25,0.5) node {$k$};
\draw[shift={(6,0)}] (0.75,-.25) node {$j$};
\draw[shift={(6,0)}] (0.75,0.5) node {$i$};
\fill[shift={(6,0)}, color=black] (0,0) circle (3pt);

\end{tikzpicture}
    \caption{$3$-points of the first type with $i<j<k$.}
    \label{fig:3ptcases}
\end{center}
\end{figure}
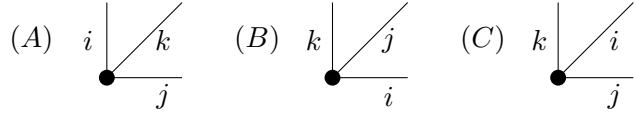

In the regeneration process, the diagonal line is regenerated into a conic which is tangent to both lines. In the next step, each line of the two lines is regenerated into two parallel lines, causing the appearance of three cusps instead of each tangency point. Moreover, each node is regenerated into four  nodes as in Figure \ref{fig:3ptcasesAlgGeom}.  Suppose that two lines intersect at a node, say lines $i$ and $j$.  Recall from Lemma \ref{lem:25} that the resulting braid is $Z^2_{i\; i'\;, j\; j'\;}=Z^2_{i'\; j'}Z^2_{i'\; j}Z^2_{i\; j'}Z^2_{i\; j}$.

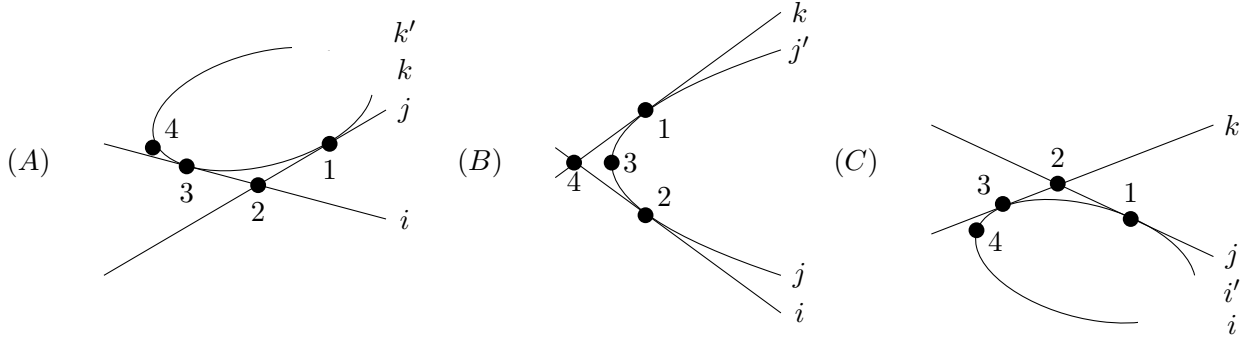
\begin{figure}[h]
\begin{center}
\begin{tikzpicture}

\draw (-7,1.5) node {$(A)$};

%\draw (8,0) .. controls (5,1) and (5,2) .. (8,3);
%\draw (8,0) .. controls (8.33,-.05) and (8.66,.1) .. (9,.25);

%ALMOST:
%\draw (8,2) arc (45:315:1.5 and 1);
%\draw (8,3.5) -- (5,1.4);
%\draw (8,2) -- (5,3.5);

\draw[rotate around={15:(-3,3)}] (-3,3) arc (45:345:1.5 and .75);
\draw (-2.25,2.2) -- (-6,0);
\draw (-2.25,.75) --    (-3,.95) -- (-6,1.75);
\fill[color=black] (-5.35,1.7) circle (3pt);
\draw (-5.1,1.95) node {$4$};
\fill[color=black] (-4.9,1.45) circle (3pt);
\draw (-4.9,1.1) node {$3$};
\fill[color=black] (-3.95,1.2) circle (3pt);
\draw (-3.95,.85) node {$2$};
\fill[color=black] (-3,1.75) circle (3pt);
\draw (-3,1.4) node {$1$};

\draw (-2,.75) node {$i$};
\draw (-2,2.2) node {$j$};
\draw (-2,2.75) node {$k$};
\fill[color=white] (-3.5,3) rectangle (-2,4);
\draw (-2,3.25) node {$k'$};

\draw (-1,1.5) node {$(B)$};

\draw (3,0) .. controls (0,1) and (0,2) .. (3,3);
\draw (3,3.5) -- (0,1.3);
\draw (3,-.5) -- (0,1.7);
\fill[color=black] (.25,1.5) circle (3pt);
\draw (.25,1.25) node {$4$};
\fill[color=black] (.75,1.5) circle (3pt);
\draw (1,1.5) node {$3$};
\fill[color=black] (1.2,2.2) circle (3pt);
\draw (1.45,1.95) node {$1$};
\fill[color=black] (1.2,.8) circle (3pt);
\draw (1.45,1.05) node {$2$};

\draw (3.25,-.5) node {$i$};
\draw (3.25,0) node {$j$};
\draw (3.25,3) node {$j'$};
\draw (3.25,3.5) node {$k$};

\draw (4,1.5) node {$(C)$};

\draw[shift={(.5,0)}] [rotate around={-15:(8,0)}] (8,0) arc (15:315:1.5 and .75);
\draw (8.75,2) -- (5,.55);
%\draw (8.75,.75) --    (8,.95) -- (5,1.75);
\draw (5,2) -- (8.75,.25);
%NEEDS TO BE ROTATED AND SHIFTED!
\fill[color=black] (5.6,.6) circle (3pt);
\draw (5.85,.45) node {$4$};
\fill[color=black] (5.95,.95) circle (3pt);
\draw (5.7,1.2) node {$3$};
\fill[color=black] (6.68,1.22) circle (3pt);
\draw (6.68,1.57) node {$2$};
\fill[color=black] (7.65,.75)    circle (3pt);
\draw (7.65,1.1) node {$1$};

\fill[color=white] (7.75,-.75) rectangle (8.5,-.5);
\draw (9,-.65) node {$i$};
\draw (9,-.2) node {$i'$};
\draw (9,.25) node {$j$};
\draw (9,2) node {$k$};

\end{tikzpicture}
    \caption{Partial regeneration of the $3$-point cases with $i<j<k$.}
    \label{fig:3ptcasesAlgGeom}
\end{center}
\end{figure}

\begin{proposition}
Given a 3-point of the second type, the only ordering of the three lines obtained by the lexicographic ordering of the vertices is $i\leq j\leq k$ as shown in Figure \ref{fig:3ptcasesTypeIIordering}.
\end{proposition}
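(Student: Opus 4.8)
The plan is to follow the template of the proof of the preceding proposition: place a second-type $3$-point in a coordinate grid, apply the two lexicographic rules (first ordering the vertices, then the lines), and read off the order of the three lines. The content of the statement is that, unlike the first type, this order is forced; the reason is the extra reflection symmetry carried by a $3$-point of the second type.

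First I would record the local shape of the configuration. By the discussion of the square construction in Section~\ref{sec:surfaces} (see Figures~\ref{fig:two3ptcases}(II) and \ref{fig:fig16}), a $3$-point of the second type consists of one horizontal-or-vertical line $\ell$ through the central vertex $c$ together with two diagonal lines, one in each of the two squares meeting $\ell$ at $c$; these two diagonals are interchanged by the reflection across $\ell$. Normalizing so that $c$ is the origin and $\ell$ is vertical, and placing the rest of the degeneration above $c$, the three neighbours of $c$ along the three lines are $a=(-1,1)$, $b=(0,1)$ and $d=(1,1)$, with $b$ the neighbour of $c$ along $\ell$; the case in which the rest of the degeneration lies below $c$ is identical, with second coordinates replaced by $-1$.

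Next I would carry out the two comparisons. In the first case $c$ is the lower of the two vertices of each of the three lines, and in the second case it is the upper one; in either case the order of the three lines is decided entirely by the lexicographic comparison of $a$, $b$, $d$, which share a second coordinate and have first coordinates $-1<0<1$, so $a<b<d$. Hence the line through $c$ and $a$ is smallest, the line $\ell$ (through $c$ and $b$) is in the middle, and the line through $c$ and $d$ is largest. Renaming these $i$, $j$, $k$ gives exactly the labelling of Figure~\ref{fig:3ptcasesTypeIIordering}, in which the non-diagonal line is the middle line $j$ and the two diagonals are the outer lines $i$ and $k$. Running the same computation with $\ell$ horizontal again puts $\ell$ in the middle slot, with one diagonal on each side; and since the configuration is symmetric across $\ell$, interchanging the two diagonals changes nothing, so there is a single labelling. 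This should be contrasted with a first-type $3$-point, where the lone diagonal is not an axis of symmetry and can occupy any of the three slots, producing the three cases (A), (B), (C) of Figure~\ref{fig:3ptcasesTypeIordering}.

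The step that needs genuine justification --- and which I expect to be the main obstacle --- is the local-shape claim above: that the two diagonal lines of a second-type $3$-point really do flank $\ell$, one on each side, rather than lying on the same side of $\ell$ or being collinear. If the two diagonal edges at $c$ were collinear they would form a single line and $c$ would be a $2$-point, not a second-type $3$-point; and if they lay on the same side of $\ell$, gluing the adjacent squares would force a fourth edge at $c$. Thus this is precisely the content of the square construction recalled in Section~\ref{sec:surfaces}, and once it is invoked the rest is the short computation above.
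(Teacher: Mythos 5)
Your proof is correct and follows essentially the same route as the paper's: assign coordinates to the vertices of the local configuration and apply the two lexicographic rules (first on vertices, then on lines) to conclude that the non-diagonal line must sit between the two diagonals, exactly the labelling of Figure \ref{fig:3ptcasesTypeIIordering}. The paper performs this computation only for the single orientation drawn in that figure, whereas you additionally check the reflected orientation (central vertex on top, as at $v_{n+4}$ in Figure \ref{fig:example:CP1xCP1}) and the horizontal variant, and you justify the local shape from the square construction that the paper simply reads off its figure --- extra thoroughness, but not a different method.
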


\begin{figure}[h]
\begin{center}
\begin{tikzpicture}

\draw (0,1) -- (1,0) -- (2,1);
\draw (1,0) -- (1,1);
\draw (0,0.5) node {$i$};
\draw (2,.5) node {$k$};
\draw (0.85,0.5) node {$j$};
%\fill[color=black] (2,1) circle (3pt);
\fill[color=black] (1,0) circle (3pt);
%\fill[color=black] (0,1) circle (3pt);
%\fill[color=black] (1,1) circle (3pt);
\draw (1,-.5) node {$P$};
\draw (0,1.5) node {$Q$};
\draw (1,1.5) node {$R$};
\draw (2,1.5) node {$S$};

\end{tikzpicture}
    \caption{The only $3$-point case for the second type ordering with $i<j<k$.}
    \label{fig:3ptcasesTypeIIordering}
\end{center}
\end{figure}
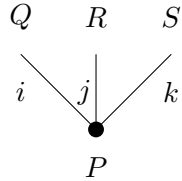

\begin{proof}
Following Figure \ref{fig:3ptcasesTypeIIordering}, we assign coordinates to the vertices $P = (x_2, y_1)$, $Q = (x_1, y_2)$,
$R = (x_2, y_2)$, and $S = (x_3, y_2)$ with $x_1 < x_2 < x_3$ and $y_1 < y_2$.  Following the enumeration of vertices, we conclude that we have the enumeration of lines $i<j<k$.
\end{proof}

\begin{example}
\label{ex:3ptTypeII}
Consider the surface $\mathbb{CP}^1\times\mathbb{CP}^1$.  Take $l_1=\mathbb{CP}^1\times pt$ and $l_2=pt\times\mathbb{CP}^1$.  For $a,b\in\mathbb{N}$, consider the linear combination $al_1+bl_2$.  We embed our surface into a projective space with respect to the linear system $|al_1+bl_2|$.

For this example we take $a=1$ and $b=n$ as in Figure \ref{fig:example:CP1xCP1}.
%Consider the degeneration of $\mathbb{CP}^1\times\mathbb{CP}^1$ with the bi-embeddings ($1,n$) as in Figure \ref{fig:example:CP1xCP1}.
  Enumerating the vertices in a purely lexicographic manner $v_1<v_2<\ldots<v_{2n+2}$, we obtain the black vertices $v_2,v_4,\ldots,v_{n-1}$ and $v_{n+4},v_{n+6},\ldots$ that are 3-points of the second type.

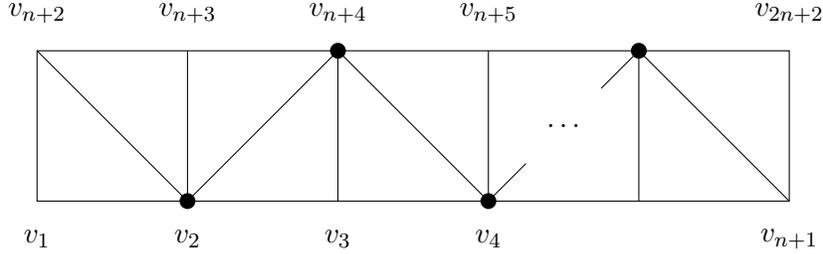
\begin{figure}[h]
\begin{center}
\begin{tikzpicture}

\draw (0,2) -- (0,0) -- (10,0) -- (10,2) -- cycle;
\draw (0,2) -- (2,0) -- (4,2) -- (6,0) -- (6.5,.5);
\draw (7.5,1.5) -- (8,2) -- (10,0);
\draw (2,0) -- (2,2);
\draw (4,0) -- (4,2);
\draw (6,0) -- (6,2);
\draw (8,0) -- (8,2);
\draw (7,1) node {$\ldots$};
\fill[color=black] (2,0) circle (3pt);
\fill[color=black] (4,2) circle (3pt);
\fill[color=black] (6,0) circle (3pt);
\fill[color=black] (8,2) circle (3pt);

\draw (0,-.5) node {$v_1$};
\draw (2,-.5) node {$v_2$};
\draw (4,-.5) node {$v_3$};
\draw (6,-.5) node {$v_4$};
%\draw (4,-.5) node {$v_n$};
\draw (10,-.5) node {$v_{n+1}$};

\draw (0,2.5) node {$v_{n+2}$};
\draw (2,2.5) node {$v_{n+3}$};
\draw (4,2.5) node {$v_{n+4}$};
\draw (6,2.5) node {$v_{n+5}$};
%\draw (4,1.5) node {$v_{2n+1}$};
\draw (10,2.5) node {$v_{2n+2}$};

%\draw (0,1) -- (0,0) -- (5,0) -- (5,1) -- cycle;
%\draw (0,1) -- (1,0) -- (2,1) -- (3,0) -- (3.25,.25);
%\draw (3.75,.75) -- (4,1) -- (5,0);
%\draw (1,0) -- (1,1);
%\draw (2,0) -- (2,1);
%\draw (3,0) -- (3,1);
%\draw (4,0) -- (4,1);
%\draw (3.5,0.5) node {$\ldots$};
%\fill[color=black] (1,0) circle (3pt);
%\fill[color=black] (2,1) circle (3pt);
%\fill[color=black] (3,0) circle (3pt);
%\fill[color=black] (4,1) circle (3pt);
%
%\draw (0,-.5) node {$v_1$};
%\draw (1,-.5) node {$v_2$};
%\draw (2,-.5) node {$v_3$};
%\draw (3,-.5) node {$v_4$};
%%\draw (4,-.5) node {$v_n$};
%\draw (5,-.5) node {$v_{n+1}$};
%
%\draw (0,1.5) node {$v_{n+2}$};
%\draw (1,1.5) node {$v_{n+3}$};
%\draw (2,1.5) node {$v_{n+4}$};
%\draw (3,1.5) node {$v_{n+5}$};
%%\draw (4,1.5) node {$v_{2n+1}$};
%\draw (5,1.5) node {$v_{2n+2}$};

\end{tikzpicture}
    \caption{A degeneration of $\mathbb{CP}^1\times\mathbb{CP}^1$ with $a=1$ and $b=n$.}%the bi-embeddings ($1,n$).}
    \label{fig:example:CP1xCP1}
\end{center}
\end{figure}

To obtain a second example, we extend the degeneration of the bi-embedding (1,2) of $\mathbb{CP}^1\times\mathbb{CP}^1$ to a degeneration of a singular toric surface embedded in $\mathbb{CP}^6$ as in \cite{AO} and shown in Figure \ref{fig:ogata}.  Here the purely lexicographic ordering of vertices $v_1<v_2<\ldots<v_7$ gives a 3-point of the second type -- the black vertex labelled $v_2$.

\begin{figure}[h]
\begin{center}
\begin{tikzpicture}

\draw (2,0) -- (0,2) -- (2,4) -- (4,2) -- (2,0) -- (0,0) -- (0,2) -- (4,2) -- (4,0) -- (2,0) -- (2,4);

\draw (-.25,-.5) node {$v_1$};
\fill[color=black] (2,0) circle (3pt);
\draw (2,-.5) node {$v_2$};
\draw (4.25,-.5) node {$v_3$};

\draw (-.25,2) node {$v_4$};
\draw (1.75,2.25) node {$v_5$};
\draw (4.25,2) node {$v_6$};

\draw (2,4.25) node {$v_7$};

\end{tikzpicture}
  \caption{A degeneration of a singular toric surface embedded in $\mathbb{CP}^6$.}
  \label{fig:ogata}
\end{center}
\end{figure}
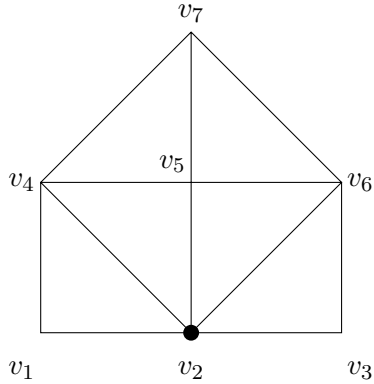

%\begin{figure}[h]%!b]
%  \begin{center}
%    \includegraphics[scale=0.75]{home.eps}
%  \end{center}
%
%  \caption{Ogata.}
%  \label{fig-ogata}
%\end{figure}

\end{example}

\section{Main Results}
\label{sec:Main}

The overall goal of Subsection \ref{subsec:local} is to translate local information from the degeneration of the curve to local information in the resulting braid obtained by the braid monodromy algorithm presented by Moishezon-Teicher \cite{MTII}.  In the first setting we consider the components that meet at each singularity, and in the second setting we consider strands of the braid.  Proposition \ref{prop:takej} states that these global indices are in fact the same across both settings.  Proposition \ref{prop:intact} shows that the order of these singularities does not affect the braiding.

The main result in Subsection \ref{subsec:global} is Proposition \ref{thm:mirror}, which relates a global transformation of the curve to a global transformation of the resulting braid.

%{\red WAIT FOR 5.2}

%We divide this section into local and global results.
These results provide us with the ability to classify the closures of the links obtained from 2-points and 3-points in Section \ref{sec:examples}.

\subsection{Local contributions to the braid monodromy}
\label{subsec:local}

Our first goal is to prove that the indices of the components of the curve meeting at a singularity (in local configurations) tell us where the braiding occurs when considering all strands.

%According to the local enumeration of the lines around each singularity, we get a related braid that involves the two indices of the components that meet at the singularity.

\begin{proposition}
\label{prop:takej}
The global indices $i$ and $j$ of the two components of the curve that meet at a local singularity
%appear as endpoints
are the global indices of the strands
%of
in
the resulting braid for that local singularity.
\end{proposition}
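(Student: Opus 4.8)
The plan is to unwind the definitions in the Moishezon--Teicher braid monodromy algorithm and track how the indexing on the typical fiber is set up. Recall that in this algorithm one fixes a generic line in $\mathbb{CP}^2$, a generic point, and then projects; the branch curve $S$ meets a typical fiber $\mathbb{C}^1$ in $\deg S$ points, and these points are ordered (say from left to right, or by increasing real part) to produce the identification of the fiber with the punctured disc whose punctures are labelled $1,\ldots,m$. The braid monodromy is then the image of a loop in the base encircling the discriminant point corresponding to a given singularity, acting on this labelled punctured fiber. The content of the proposition is essentially bookkeeping: the ``global index'' of a component of $S$ near a singularity is, by definition, the position (in this left-to-right order on the fiber) of the point where that component meets a typical fiber near the singularity; and the ``global index of a strand'' in the resulting braid is, again by definition, the position of that strand among all $m$ strands. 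So the two notions are literally the same labelling, and the proposition asserts consistency rather than a computation.

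First I would recall the precise construction of the skeleton (the ordered set of points on the fiber just to one side of the singularity) from \cite{MTI, MTII}, emphasizing that the skeleton inherits its ordering from the global left-to-right ordering of the $m$ intersection points of $S$ with the fiber. Next I would observe that a local singularity where components $C_i$ and $C_j$ meet contributes, locally, exactly the two strands coming from $C_i\cap(\text{fiber})$ and $C_j\cap(\text{fiber})$, and that the braid monodromy factor $Z_{ij}$ (or $Z^2_{ij}$, $Z^4_{ij}$, etc., according to the singularity type as catalogued in Section~\ref{sec:Background}) by construction acts nontrivially only on the punctures in positions $i$ and $j$. Then I would match up the ``global index'' of $C_i$ as a component (its position among all lines/conics, inherited from the vertex/line ordering of the degeneration) with the position of the corresponding puncture on the fiber: this is exactly the compatibility built into the lexicographic (or other) ordering discussed in Section~\ref{sec:surfaces}, since the ordering of the lines of $S_0$ is defined precisely so as to agree with the left-to-right ordering of their traces on a generic fiber. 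Finally I would note that regeneration replaces a point $c$ by a nearby pair $c,c'$ without disturbing the relative order of the other points, so the indices $i,j$ (and their primed regenerations $i',j'$) still record fiber positions after regeneration.

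The main obstacle is not any hard estimate but pinning down the definitions so that the statement is not vacuous: one must be careful that ``global index of a component'' means the index assigned by the chosen vertex/line ordering, and that this ordering has been chosen (as it is in \cite{MT:simp, MTI}) to be \emph{compatible} with the geometric left-to-right order on the generic fiber used by the braid monodromy algorithm. Once that compatibility is made explicit, the proof is a direct appeal to the construction of $Z_{ij}$ (Figure~\ref{fig:skeleton} and Property~\ref{property:node}), which visibly braids only strands $i$ and $j$, together with the observation that the ordered skeleton is a sub-configuration of the full ordered fiber. I would therefore structure the write-up as: (1) recall the labelled fiber and skeleton; (2) recall that line/vertex orderings are chosen to match the fiber order; (3) recall that the monodromy factor for a singularity of $C_i,C_j$ is supported on positions $i,j$; (4) conclude, noting stability under regeneration.
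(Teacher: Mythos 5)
Your argument is correct in substance and, like the paper's, is ultimately a piece of definitional bookkeeping, but the route is different. You work on the side of the typical fiber: you recall the labelled punctured fiber and skeleton, argue that the line/vertex ordering of the degeneration is chosen compatibly with the left-to-right order of the traces on a generic fiber, note that a factor $Z^{\varepsilon}_{i\,j}$ is by construction supported on the punctures in positions $i$ and $j$ (Property~\ref{property:node}, Figure~\ref{fig:skeleton}), and finish with the observation that regeneration replaces a point $c$ by a nearby pair $c,c'$ without disturbing the relative order. The paper instead works on the side of the braid groups: it quotes the local model $y^{2}=x^{\varepsilon}$ with local monodromy $\phi=Z^{\varepsilon}_{i\,i+1}$ from Moishezon--Teicher, and then argues that the $k$-point's contribution $\beta_{2k}$ sits inside the global factorization via the chain of embeddings $\phi\hookrightarrow\beta_{2k}\hookrightarrow\Delta^{2}_{2n}$, these embeddings being induced by regenerating $B_{k}\hookrightarrow B_{n}$ to $B_{2k}\hookrightarrow B_{2n}$, so that the endpoints of the strands keep their indices. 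The two arguments buy complementary things: your version makes explicit the compatibility between the degeneration's enumeration of lines and the fiber order — a point the paper leaves implicit and which, as you note, is exactly where the statement could otherwise be vacuous — while the paper's version makes explicit the local-to-global embedding of braids under regeneration, which you only gesture at when you say the ordered skeleton is a sub-configuration of the full ordered fiber. Either route is acceptable; if you keep yours, do spell out (with a citation to the Moishezon--Teicher ordering conventions) that the lexicographic enumeration of lines in $S_{0}$ agrees with the order of their intersection points on the chosen generic fiber, since that is the one step that is a genuine hypothesis about the setup rather than pure unwinding.
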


\begin{proof}
Recall that the singularity can be viewed locally as the curve $y^2=x^\varepsilon$ for $\varepsilon=1,2,3,4$ giving, respectively, a branch point, a node, a cusp, and a tangency.  The resulting local braid monodromy for such a singularity is $\phi=Z^\varepsilon_{i\;i+1}$ as in Moishezon-Teicher \cite[p.487 Proposition-Example VI.1.1.]{MTI}\cite[p.487 Proposition-Example VI.1.1.]{MTI}.
%Proposition \ref{prop:halftwist}.

This singularity appears in the regeneration of some $k$-point (for $k=2,3$), where we recall that we get only branch points, nodes, and cusps.
%Consider the $k$-point (for $k=2,3$) in which this singularity occurs.
This $k$-point contributes $\beta_{2k}$, where $2k$ is the degree of the local regeneration as well as the number of strands in this braid, to the overall braid monodromy factorization, and we embed $\phi\hookrightarrow \beta_{2k}$.

The construction of $\beta_{2k}$ is determined by the regeneration of the embedding of braid groups $B_k\hookrightarrow B_n$ into $B_{2k}\hookrightarrow B_{2n}$, where the index of $B$ is the number of strands of the braid, and recall that $n$ is the number of lines in the projection of the degenerated surface $X_0$ to $S_0$.

Then we have $\phi\hookrightarrow\beta_{2k}\hookrightarrow \Delta^2_{2n}$, where the braid monodromy factorization contributes $\Delta_{2n}^2$ globally.  Thus the end points maintain their indices.
\end{proof}

Next we take this local information obtained from the singularity and show that it is unaffected by other singularities.

%Furthermore we must show that the above is not affected by its order in the braid monodromy process, that is, that conjugations from singularities that come before it do not affect the braiding.  %MAYBE THIS IS WRONG:  When considering the closure of the braid, these conjugations cancel.

\begin{proposition}
\label{prop:intact}
The local contribution to the resulting braiding of the strands indexed by Proposition \ref{prop:takej} is unaffected by the order in which the singularity is considered in the braid monodromy process.
%The locally contributed braids stay intact (in the simply connected topological sense) regardless of any conjugations coming from singularities appearing above it on the braid monodromy table.
\end{proposition}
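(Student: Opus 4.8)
The plan is to show that the local braid $\phi = Z^\varepsilon_{i\,i+1}$ associated to a singularity commutes, in the relevant sense, with the braid factors coming from the other singularities of the same $k$-point, so that its position in the braid monodromy factorization may be freely permuted without changing the conjugacy data attached to it. First I would recall the setup from Proposition \ref{prop:takej}: the regeneration of a $k$-point (for $k=2,3$) produces a local braid $\beta_{2k}$ on $2k$ strands, built from the node-regeneration factors of Lemma \ref{lem:25} and the tangency-regeneration factors of Theorem \ref{3rule}, all of which are conjugates of powers of the $Z_{a\,b}$. By Proposition \ref{prop:takej}, each such factor acts only on the strands whose indices are the global indices $i,j$ (together with their regenerated partners $i',j'$) of the two curve components meeting at that singularity. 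The key observation is that in the braid monodromy factorization of a degenerated-then-regenerated $k$-point, the supports of the different singularity factors either are disjoint or overlap only in the controlled ways dictated by the regeneration rules, and the product of all of them is the full twist $\Delta^2_{2k}$, which is central.

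The main steps, in order, are as follows. First I would make precise the notion of ``unaffected by the order'': two factorizations $A\cdot\phi\cdot B$ and $A'\cdot\phi\cdot B'$ obtained by reordering the singularities produce the same braid (or Hurwitz-equivalent factorizations) and, crucially, the same conjugate of $Z^\varepsilon_{i\,i+1}$ up to the action of the full twist. Second, I would invoke the Moishezon-Teicher braid monodromy algorithm \cite{MTI, MTII}: reordering the singularities along the real axis corresponds to a sequence of Hurwitz moves, under which a factor $\phi$ is replaced by a conjugate $\phi^w$ where $w$ is a product of the intervening factors. Third, I would show that for $2$- and $3$-points the intervening factors $w$ act trivially (or by an inner automorphism coming from the central full twist) on the strands $\{i,i',j,j'\}$ supporting $\phi$: this is where the explicit local pictures of Figures \ref{fig:2ptcasesAlgGeom}, \ref{fig:2ptcases}, \ref{fig:fig15}, \ref{fig:fig16}, and \ref{fig:3ptcasesAlgGeom} are used, together with Property \ref{property:node} and Lemma \ref{lem:25}, to verify that the skeletons of the other factors do not separate $i$ from $j$ within the fiber. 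Fourth, I would conclude that the local contribution, namely the conjugacy class of $\phi$ as a subword sitting inside $\beta_{2k} \hookrightarrow \Delta^2_{2n}$, is independent of the chosen ordering.

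The hard part will be step three: controlling the conjugating word $w$ for the $3$-point of the first type, where the regeneration produces two double lines and a conic with two tangencies and four nodes (hence, after full regeneration, six cusps and eight branch points). There the factors genuinely interleave on overlapping strand-sets, and one must check case by case — using the decompositions in Lemma \ref{lem:25}(a),(b),(f) and the braid relations of Property \ref{property:node} — that the net effect of moving $\phi$ past a block of these factors is conjugation by an element that fixes the relevant pair of endpoints, equivalently that the permutation underlying $w$ does not interchange the blocks $\{i,i'\}$ and $\{j,j'\}$. I expect this to reduce, after using that $\Delta^2_{2k}$ is central, to a finite and essentially combinatorial verification on the skeletons, which is manageable precisely because we have restricted to $k\le 3$; the generic-$k$ statement would be the genuine obstacle and is deliberately outside the scope here.
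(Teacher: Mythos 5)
Your proposal aims at a stronger statement than is needed, and that stronger statement is the point where the argument would break. You set out to show that the conjugating word $w$ picked up when a factor $\phi=Z^\varepsilon_{i\,i+1}$ is moved past other factors acts trivially on the strands $\{i,i',j,j'\}$ (or acts only through the central full twist). This is not true in general: the intervening factors are not restricted to the singularities of the same $k$-point --- the proposition concerns the order of singularities in the \emph{whole} braid monodromy process --- and conjugation genuinely drags extraneous strands through the local region, as the paper's own examples $(Z_{3\;8})^{Z^2_{3\;5}Z^2_{3\;4}}$ (Figure \ref{fig:skeleton2}) and $(Z_{1\;2})^{Z^2_{2\;4}Z^2_{2\;3}}$ (Figure \ref{fig:exampleconj}) illustrate. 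Your step three, the case-by-case check on the local skeletons of the $2$- and $3$-points, cannot control $w$, because $w$ is built from factors of arbitrary other singularities of the global curve, not from the local configuration; and invoking centrality of $\Delta^2_{2k}$ does not help, since $w$ is not the full twist. Your opening sentence, restricting attention to ``the braid factors coming from the other singularities of the same $k$-point,'' already narrows the claim to a special case of what must be proved.

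The paper's proof is much softer and avoids all of this. Reordering conjugates each local factor $\beta_1,\dots,\beta_k$ by the \emph{same} word $\alpha$, so the product, i.e.\ the local block $\beta$, is simply replaced by $\alpha^{-1}\beta\alpha$: the local contribution is intact up to a single conjugation. One then does not need $\alpha$ to fix the relevant strands at all; one only needs that conjugation does not change the closure, and that the extraneous strands which enter the local picture through $\alpha^{-1}$ leave through $\alpha$ in the opposite way, so in the closure they acquire no linking with the components coming from $\beta$ and can be conjugated back to the identity. If you replace your step three by this observation --- conjugation invariance of the closure plus the cancellation of $\alpha^{-1}$ against $\alpha$ on the extra strands --- your Hurwitz-move framing in step two becomes a correct and complete argument; as written, however, the proposal pursues a claim that is both unnecessary and false in general.
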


\begin{proof}
The locally contributed braid $\beta$ can be written as a product $\beta_1,\ldots,\beta_k$ of braids.  The individual conjugations $\alpha^{-1}\beta_i\alpha$ together give the single conjugation $\alpha^{-1}\beta\alpha$, thus leaving $\beta$ intact.

Any additional strands that come into $\beta$ in $\alpha^{-1}$ leave in $\alpha$ in the opposite way.  Thus in the closure the components related to the additional strands do not become linked with the components coming from $\beta$.  In particular, these strands can themselves be conjugated to return back to the identity.
\end{proof}

In the regeneration process, each indexed component $i$ in the branch curve of $n$ components becomes two components $i$ and $i'$.  In braid notation on $2n$ strands, these correspond to strands $2i-1$ and $2i$, as in the following proposition.

\begin{proposition}
\label{prop:BranchCusp}
The following products of braids correspond to Figure \ref{fig:2ptcases} (A1), (A2), (B1), (B2) respectively, which depict cusps and branch points.
\begin{eqnarray}
\label{LeftCusp1}
(Z_{i'\;i+1}^3)^{Z_{i+1\;i+1'}} \cdot Z^3_{i'\;i+1} \cdot (Z^3_{i'\;i+1})^{Z^{-1}_{i+1\;i+1'}} &=&
(\sigma_{2i+1}^{-1}\sigma_{2i}^3\sigma_{2i+1})\cdot\sigma_{2i}^3\cdot(\sigma_{2i+1}\sigma_{2i}^3\sigma_{2i+1}^{-1}) \\
\label{LeftCusp2}
(Z_{i\; i'})^{Z^2_{i',\; i+1\; i+1'}} &=& (\sigma_{2i}\sigma_{2i+1}^2\sigma_{2i})^{-1}\sigma_{2i-1}(\sigma_{2i}\sigma_{2i+1}^2\sigma_{2i}) \\
\label{RightCusp1}
(Z_{i\;i+1}^3)^{Z_{i\;i'}} \cdot Z^3_{i\;i+1} \cdot (Z^3_{i\; i+1})^{Z^{-1}_{i\;i'}} &=& \sigma_{2i}^3\cdot(\sigma_{2i}^{-1}\sigma_{2i-1}^3\sigma_{2i})\cdot(\sigma_{2i-1}^2\sigma_{2i}^3\sigma_{2i-1}^{-2}) \\
\label{RightCusp2}
(Z_{i+1\; i+1'})^{Z^2_{i+1,\; i\; i'}} &=& (\sigma_{2i}\sigma_{2i-1}^2\sigma_{2i})^{-1}\sigma_{2i+1}(\sigma_{2i}\sigma_{2i-1}^2\sigma_{2i})
\end{eqnarray}
\end{proposition}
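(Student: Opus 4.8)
The plan is to verify the four displayed equalities by expanding the $Z$-notation on both sides using Property \ref{property:node} together with the identification of the regenerated strand indices: component $i$ of the branch curve becomes the pair $(i,i')$, which in the $2n$-strand braid group occupies strands $2i-1$ and $2i$; similarly $i+1$ becomes strands $2i+1$ and $2i+2$, and we may relabel $i+1' $ as strand $2i+2$, with $i' = 2i$ and $(i+1) = 2i+1$. Under this dictionary, the elementary generators are $Z_{i\;i'} = \sigma_{2i-1}$, $Z_{i'\;i+1} = \sigma_{2i}$, $Z_{i\;i+1} = Z_{(2i-1)\;(2i+1)} = \sigma_{2i}^{-1}\sigma_{2i-1}\sigma_{2i}$ (conjugating a non-adjacent band past one strand), $Z_{i+1\;i+1'} = \sigma_{2i+1}$, and the half-twist bands $Z_{i',\,i+1\,i+1'}$ and $Z_{i+1,\,i\,i'}$ become the three-strand bands whose squares appear as conjugating elements. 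I would handle each of \eqref{LeftCusp1}--\eqref{RightCusp2} by first translating each $Z$ or $Z^2$ factor into $\sigma$'s via Property \ref{property:node}, then performing the indicated conjugations.

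For \eqref{LeftCusp1}: the band $Z_{i'\;i+1}$ on adjacent strands $2i,2i+1$ is just $\sigma_{2i}$, so $Z^3_{i'\;i+1} = \sigma_{2i}^3$; conjugating by $Z_{i+1\;i+1'} = \sigma_{2i+1}$ and using the pattern $(Z_{i\;j})^{Z_{j\;k}}$-type moves from Figures \ref{fig:conjugatedsigma}--\ref{fig:skeleton2} yields $\sigma_{2i+1}^{-1}\sigma_{2i}^3\sigma_{2i+1}$ and $\sigma_{2i+1}\sigma_{2i}^3\sigma_{2i+1}^{-1}$ for the two outer factors — this is the standard three-cusp regeneration of a tangency from Theorem \ref{3rule}, read in $\sigma$-coordinates. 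For \eqref{RightCusp1} the only extra wrinkle is that $Z_{i\;i+1}$ is now a non-adjacent band, equal to $\sigma_{2i}^{-1}\sigma_{2i-1}\sigma_{2i}$ by Property \ref{property:node}; cubing gives $\sigma_{2i}^{-1}\sigma_{2i-1}^3\sigma_{2i}$, and then conjugating by $Z_{i\;i'} = \sigma_{2i-1}$ and by $\sigma_{2i-1}^{-1}$ produces the three stated factors after a short simplification using $\sigma_{2i-1}\sigma_{2i}^{-1}\sigma_{2i-1}^3\sigma_{2i}\sigma_{2i-1}^{-1}$-type braid relations. For \eqref{LeftCusp2} and \eqref{RightCusp2}, which encode the branch point of the conic, I would expand the three-strand band $Z_{i',\,i+1\,i+1'}$ using Property \ref{property:highermult} (or directly: its square is $\sigma_{2i}\sigma_{2i+1}^2\sigma_{2i}$, a full twist of the pair $(2i+1,2i+2)$ around strand $2i$), so that conjugating the generator $Z_{i\;i'} = \sigma_{2i-1}$ by it gives exactly $(\sigma_{2i}\sigma_{2i+1}^2\sigma_{2i})^{-1}\sigma_{2i-1}(\sigma_{2i}\sigma_{2i+1}^2\sigma_{2i})$; \eqref{RightCusp2} is the mirror-image computation with the roles of the $i$-pair and the $(i+1)$-pair interchanged.

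The main obstacle I anticipate is purely bookkeeping: keeping the strand relabelling consistent and correctly tracking which band becomes adjacent versus non-adjacent after each conjugation, since an off-by-one or an orientation error in translating a $Z_{i\;j}$ into $\sigma$'s (and choosing which of the two equivalent expressions in Property \ref{property:node} to use) will throw off the final normal form. There is no conceptual difficulty beyond this — each identity is a finite word computation in $B_{2n}$ — so the proof would be organized as four short paragraphs, one per equation, each of the form: write out the conjugating element as a $\sigma$-word, substitute, and simplify using only the braid relations and Properties \ref{property:node} and \ref{property:highermult}. I would present \eqref{LeftCusp1} in full detail as the template and then indicate that \eqref{RightCusp1}, \eqref{LeftCusp2}, \eqref{RightCusp2} follow by the same mechanism, the latter two being additionally related to the former pair by the left-right mirror symmetry that is the subject of Proposition \ref{thm:mirror}.
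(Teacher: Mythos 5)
Your proposal is correct and follows essentially the same route as the paper: the left-hand sides are produced by the regeneration rules (Theorem \ref{3rule} for the cusps, with the branch-point conjugations), and the right-hand sides are obtained by translating the $Z$-bands into $\sigma$-generators on strands $2i-1,2i,2i+1,2i+2$ via Property \ref{property:node} (and Lemma \ref{lem:25} for the three-strand twists), then simplifying with the braid relations. Your dictionary and the resulting four verifications match the paper's computation, including the values $\sigma_{2i}\sigma_{2i+1}^2\sigma_{2i}$ and $\sigma_{2i}\sigma_{2i-1}^2\sigma_{2i}$ for the conjugating full twists.
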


\begin{proof}
According to the regeneration rule for tangency (Theorem \ref{3rule}) we get the left hand sides of the equations.  Using Property \ref{property:node} %Following  \cite[p. 340]{MTIV} and Property \ref{property:node},
we get the right hand sides.

The product of the three braids in  \ref{LeftCusp1} and \ref{RightCusp1} corresponds to the three cusps in Figure \ref{fig:2ptcases} (A1) and (B1) respectively.  These can be simplified to $\sigma_{2i}\sigma_{2i+1}^3\sigma_{2i}\sigma_{2i+1}^3\sigma_{2i}$ and $\sigma_{2i}\sigma_{2i-1}^3\sigma_{2i}\sigma_{2i-1}^3\sigma_{2i}$ respectively.  Moreover, we get the braids in \ref{LeftCusp2} and \ref{RightCusp2} corresponding to the branch points in Figure \ref{fig:2ptcases} (A2) and (B2) respectively.
\end{proof}

To demonstrate these last few results, consider the following example.

\begin{example}
The braid $(Z_{1\;2})^{Z^2_{2\;4}Z^2_{2\;3}}$ is given in Figure \ref{fig:exampleconj}.

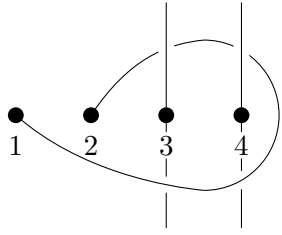
\begin{figure}[h]
\begin{center}
\begin{tikzpicture}

\draw (9,0) -- (9,-1.5);
\draw (10,0) -- (10,-1.5);

\draw[color=white, line width=6pt] (9.5,-1) arc (-90:90:1);
\draw (9.5,-1) arc (-90:90:1);
\draw (8,0) .. controls (8.5,.75) and (9,.95) .. (9.5,1);
\draw[color=white, line width=6pt] (7,0) .. controls (7.85,-.75) and (9,-.95) .. (9.5,-1);
\draw (7,0) .. controls (7.85,-.75) and (9,-.95) .. (9.5,-1);

\draw[color=white, line width=6pt] (9,0) -- (9,1);
\draw (9,0) -- (9,1.5);

\draw[color=white, line width=6pt] (10,0) -- (10,1);
\draw (10,0) -- (10,1.5);

\fill[color=black] (7,0) circle (3pt);
\fill[color=black] (8,0) circle (3pt);
\fill[color=black] (9,0) circle (3pt);
\fill[color=black] (10,0) circle (3pt);
\draw (7,-0.4) node {$1$};
\draw (8,-0.4) node {$2$};
\fill[color=white] (9,-.4) circle (5pt);
\draw (9,-0.4) node {$3$};
\fill[color=white] (10,-.4) circle (5pt);
\draw (10,-0.4) node {$4$};

\end{tikzpicture}
    \caption{The braid given by $(Z_{1\;2})^{Z^2_{2\;4}Z^2_{2\;3}}$.}
    \label{fig:exampleconj}
\end{center}
\end{figure}

The resulting braid can be written $(\sigma_2^{-1}\sigma_3^{-2}\sigma_2^{-1})\sigma_1(\sigma_2\sigma_3^2\sigma_2)$.  It is clear here that the components indexed by 3 and 4 are unlinked and can be removed by conjugation.
\end{example}

This leads us to the following results on the number of components of the links we obtain.  In the proposition above we saw indexed components $i$ and $i'$ in the branch curve.  However in several cases these components become a single component in the closure of the related braid.  In order to better see this, we present first the following case analysis for branch points, cusps, and nodes.

%NOT TRUE!
%\begin{theorem}
%There is a bijection between components of the arrangement and components of the link obtained by closing the braids.
%\end{theorem}

\begin{lemma}
\label{lem:branchcomponent}
A conic indexed by $i$ and $i'$ with a branch point contributes a single component to the link obtained by closing the braid.
\end{lemma}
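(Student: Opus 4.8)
The plan is to analyze the braid contributed by a branch point of a conic indexed by $i,i'$ (that is, strands $2i-1$ and $2i$) and trace how its closure connects the two strands into one circle. By Proposition \ref{prop:intact} the local contribution is unaffected by the ambient braiding, so I may work with the isolated local braid on the two relevant strands. First I would recall from the regeneration of a $2$-point (Figure \ref{fig:2ptcases} (A2) or (B2)) and Proposition \ref{prop:BranchCusp}, equations \eqref{LeftCusp2} and \eqref{RightCusp2}, that the branch point contributes a conjugate of a single generator $\sigma_{2i-1}$ (or $\sigma_{2i+1}$) by a word in the other generators. Since conjugation does not change the closure up to isotopy (it is a Markov move / cyclic-type move on the part of the diagram it touches, and by Proposition \ref{prop:intact} the conjugating strands return to the identity), the closure restricted to these strands is the same as the closure of the single crossing $\sigma_{2i-1}$.

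Next I would observe that in the closure of a braid, the permutation induced by the braid word determines how strand-endpoints are glued into circles: two strands lie on the same component of the closure if and only if they are in the same cycle of the underlying permutation. The generator $\sigma_{2i-1}$ (hence any conjugate of it by a word on disjoint generators, up to relabeling) induces the transposition swapping the endpoints $2i-1$ and $2i$. Therefore in the closure the arcs at positions $2i-1$ and $2i$ are joined into a single circle rather than two. This is precisely the statement that the conic indexed by $i$ and $i'$ contributes one component and not two.

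The main step to be careful about is the bookkeeping: I must confirm that among \emph{all} the local braids contributed by the $2$-point (the three cusps \emph{and} the branch point), it is the branch-point factor that is responsible for merging strands $2i-1$ and $2i$, and that the cusp factors $Z^3_{i'\,i+1}$ etc., being odd powers of generators on \emph{other} pairs of strands, do not further identify or separate these two. Concretely, the three-cusp product simplifies (as noted after Proposition \ref{prop:BranchCusp}) to a word whose underlying permutation I would compute to be trivial on the $\{2i-1,2i\}$ block, so only the branch-point transposition acts there. Hence the two strands $2i-1,2i$ of the conic end up on exactly one component.

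The part I expect to require the most care is making the ``conjugation does not affect the closure'' reduction rigorous in the local setting: one must invoke Proposition \ref{prop:intact} to guarantee that the conjugating word $\alpha$ in $\alpha^{-1}\sigma_{2i\pm1}\alpha$ genuinely cancels in the closed-up diagram (the extra strands entering via $\alpha^{-1}$ leave via $\alpha$ and can be unknotted back to the identity), rather than leaving behind linking that could merge additional components. Once that is in hand, the permutation computation is routine and the single-component claim follows immediately.
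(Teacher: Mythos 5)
Your proof is correct and follows essentially the same route as the paper, which offers no standalone proof of this lemma but justifies it inline (in the proof of Example-Proposition \ref{prop:2ptexample}) by exactly your observation: the branch point contributes a conjugate of the half-twist $Z_{i\;i'}$, whose underlying permutation transposes the two strands of the conic while all other local factors (cusp triples and nodes) induce permutations fixing that pair, so the closure glues the two strands into one circle. One small correction: the conjugating word in Equation \ref{LeftCusp2} is not on generators disjoint from $\sigma_{2i-1}$ (it contains $\sigma_{2i}$), but its induced permutation is trivial, so the permutation bookkeeping you describe goes through unchanged.
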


%\begin{proof}
%The branch point contributes the half-twist $Z_{i\;i'}$ which is then conjugated.  %By Proposition \ref{prop:takej} we may take $j=i+1$, and b
%By Proposition \ref{prop:intact} the conjugation does not affect these components.  Thus when the braid is closed, we obtain a single component for $i$ and $i'$.
%\end{proof}

\begin{lemma}
\label{lem:tangentcomponent}
A line indexed by $j$ and $j'$ (respectively $i$ and $i'$) tangent to a conic at a singularity that regenerates into three cusps contributes a single component to the link obtained by closing the braid; together with the conic $i$ and $i'$ (respectively $j$ and $j'$) this gives two components.
\end{lemma}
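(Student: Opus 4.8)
The plan is to reduce the statement to a short computation of the underlying permutation of the local braid, using the standard fact that the number of components of the closure of a braid in $B_m$ equals the number of cycles of its image under the map $B_m \to S_m$. By Propositions \ref{prop:takej} and \ref{prop:intact} the strand indices agree with the component indices and, in the closure, the strands of a single local configuration neither link nor merge with strands coming from elsewhere; so it suffices to analyze the braid of this configuration in isolation. By Proposition \ref{prop:BranchCusp} the three cusps produced by the tangency contribute, in the situation of Figure \ref{fig:2ptcases} (A1), the braid $\sigma_{2i}\sigma_{2i+1}^3\sigma_{2i}\sigma_{2i+1}^3\sigma_{2i}$, in which the line $(j,j')$ (with $j=i+1$) occupies strands $2i+1$ and $2i+2$ while the conic $(i,i')$ occupies strands $2i-1$ and $2i$; a general $j$ is handled by the same computation on the appropriately relabelled strands, and Figure \ref{fig:2ptcases} (B1) is the mirror image, so it is enough to treat case (A) and invoke Proposition \ref{thm:mirror}.

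First I would record two facts about images in $S_{2n}$: a full twist $Z^2_{ab}$ maps to the identity, so every conjugation by such a factor -- in particular those in Proposition \ref{prop:BranchCusp} -- leaves the underlying permutation unchanged; and $Z^3_{ab}$ has the same image as $Z_{ab}$, namely the transposition of the two strands it joins. Hence the image of the three-cusp word in $S_{2n}$ is a product of five transpositions alternating between ``exchange strands $2i$ and $2i+1$'' and ``exchange strands $2i+1$ and $2i+2$'', and a short computation in $S_3$ shows this equals the single transposition exchanging strands $2i+1$ and $2i+2$: strand $2i$ is fixed, while strands $2i+1$ and $2i+2$ form one $2$-cycle. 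Therefore, in the closure of the three-cusp braid, the line $(j,j')$ closes up into exactly one component, which is the first assertion of the lemma.

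For the second assertion I would adjoin the branch point of the conic. By Lemma \ref{lem:branchcomponent} the conic $(i,i')$ is a single component; explicitly, the braid $(Z_{i\;i'})^{Z^2_{i',\;i+1\;i+1'}}$ has image in $S_{2n}$ the transposition of strands $2i-1$ and $2i$, the conjugating full twist being trivial on permutations. The complete local braid therefore maps to the product of the two disjoint transpositions exchanging $\{2i-1,2i\}$ and $\{2i+1,2i+2\}$, so it has exactly two nontrivial cycles among the four strands involved: the conic $(i,i')$ and the line $(j,j')$, i.e.\ two components. Case (B1) follows by the identical computation, or from the mirror symmetry of Proposition \ref{thm:mirror}. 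The step I expect to require the most care is this $S_3$ computation for the three cusps: one must check that they close the line's two strands into a single loop \emph{without} absorbing the remaining strand (which belongs to the conic), and then track the regeneration index map $i\mapsto 2i-1$, $i'\mapsto 2i$ carefully in order to translate the permutation statement back into the geometric one.
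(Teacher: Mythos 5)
Your proposal is correct, and in fact it supplies an argument that the paper itself omits: Lemma \ref{lem:tangentcomponent} is stated there without an explicit proof, its justification being only implicit in Example-Proposition \ref{prop:2ptexample} (and Figure \ref{fig:new2ptbraidsA}), where it is simply asserted that the tangency ``twists the two strands $j$ and $j'$ with $Z_{j\,j'}$.'' Your route makes exactly that assertion precise: counting components of the closure via cycles of the image under $B_m\to S_m$, using that $Z^2_{ab}$ maps to the identity (so all the conjugations in Proposition \ref{prop:BranchCusp} are invisible at the permutation level) and that $Z^3_{ab}$ maps to the same transposition as $Z_{ab}$. The key computation checks out: with $s=(2i\;2i{+}1)$ and $t=(2i{+}1\;2i{+}2)$ one has $ststs=t$, so the three-cusp word fixes the conic strand $2i$ and joins the line strands $2i{+}1,2i{+}2$ into one cycle; adjoining the branch-point factor, whose image is $(2i{-}1\;2i)$, gives two disjoint transpositions and hence exactly two components among the four local strands, with case (B1) following by the same computation or by Proposition \ref{thm:mirror}, and the general position of $j$ by the conjugation-invariance of the cycle structure (equivalently your appeal to Propositions \ref{prop:takej} and \ref{prop:intact}). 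Compared with the paper's implicit reasoning, your argument is more elementary and self-contained, at the cost of only seeing the permutation-level information (one component, two components) rather than the stronger geometric statement, used later in the paper, that the tangency also produces linking number four between line and conic; that finer information is not claimed in this lemma, so nothing is missing.
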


%\begin{proof}
%We consider the two singularities expressed in the regeneration rule for tangency (Theorem \ref{3rule}).  By Proposition \ref{prop:takej} we may take $j=i+1$.  Then as in Proposition \ref{prop:BranchCusp} the related braids can be simplified to $\sigma_{2i}\sigma_{2i+1}^3\sigma_{2i}\sigma_{2i+1}^3\sigma_{2i}$ and $\sigma_{2i}\sigma_{2i-1}^3\sigma_{2i}\sigma_{2i-1}^3\sigma_{2i}$.

%The braid that arises from the three cusps in Figure \ref{fig:2ptcases} (A1) (respectively (B1)) has final permutation $(j\;j')$ (respectively $(i\;i')$).  %, as in Figure \ref{fig:new2ptbraids} (A1).
%  Thus in the closure, the two strands $j$ and $j'$ (respectively $i$ and $i'$) become a single component in the link associated to the single component in the arrangement.

%By Lemma \ref{lem:branchcomponent} the other two strands $i$ and $i'$ (respectively $j$ and $j'$) become a single component, thus giving two components in total.
%\end{proof}

Recall the regeneration of a node as in Figure \ref{fig:noderegeneration}.

\begin{lemma}
\label{lem:nodecomponents}
A single node that regenerates into four nodes contributes four components to the link obtained by closing the braid.
\end{lemma}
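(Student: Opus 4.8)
The plan is to track each of the four strands produced by regenerating a single node and show that, after closing the braid, no two of them merge into a common component. Recall from Figure \ref{fig:noderegeneration} and Lemma \ref{lem:25}(f) that a node with the two crossing lines indexed $i$ and $j$ regenerates into four nodes, and the associated braid factor becomes
\[
Z^2_{i\,i',\,j\,j'} = Z^2_{i'\,j'}\,Z^2_{i'\,j}\,Z^2_{i\,j'}\,Z^2_{i\,j}.
\]
On $2n$ strands these four lines occupy strands $2i-1,\,2i,\,2j-1,\,2j$, and by Proposition \ref{prop:intact} the local contribution of this factor to the resulting braiding is unaffected by the rest of the braid monodromy factorization; by Proposition \ref{prop:takej} the four strands keep their global indices. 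So it suffices to analyze the permutation induced by the local braid on these four strands.

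First I would compute the underlying permutation of $Z^2_{i\,i',\,j\,j'}$. Each factor $Z^2_{a\,b}$ is a \emph{full} twist (the square of a half-twist $Z_{a\,b}$), hence induces the trivial permutation: geometrically a full twist of strand $b$ around strand $a$ returns every strand to its starting position. Therefore the product of the four full twists also induces the identity permutation on the strands. Consequently, when we take the braid closure, each strand closes up to its own circle, and strands $2i-1,\,2i,\,2j-1,\,2j$ give rise to four distinct components of the resulting link. This establishes that the regenerated node contributes exactly four components.

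The one point that needs a word of care — and the only place the argument is not completely formal — is to make sure that the four new components are genuinely distinct from one another and are not secretly identified through the \emph{rest} of the link diagram. But this is exactly what Proposition \ref{prop:intact} guarantees: the strands that enter the local region in the conjugating word $\alpha^{-1}$ exit symmetrically in $\alpha$, so in the closure those outside components are not linked with (and in particular not fused to) the components coming from $\beta$, and the local braid $\beta = Z^2_{i\,i',\,j\,j'}$ by itself has identity permutation. Hence the count of four components is local and stable, which is what the lemma asserts.

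I expect the main (and essentially only) obstacle to be purely expository: one must be careful to distinguish the permutation induced by a braid from the braid itself, and to invoke Propositions \ref{prop:takej} and \ref{prop:intact} to localize the computation rather than attempting to read off components from a global diagram. Once that is in place, the proof reduces to the observation that a product of full twists is a pure braid, whose closure on $k$ strands always has $k$ components.
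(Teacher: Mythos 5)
Your core argument is correct and is essentially the intended one: the regenerated factor $Z^2_{i\,i',\,j\,j'}=Z^2_{i'j'}Z^2_{i'j}Z^2_{ij'}Z^2_{ij}$ is a product of full twists, hence a pure braid, so its closure on the four strands $i,i',j,j'$ has exactly four components, and Propositions \ref{prop:takej} and \ref{prop:intact} localize the computation. One caution, though: your final paragraph overstates what Proposition \ref{prop:intact} gives you. That proposition only controls the strands introduced by the conjugating word $\alpha$; it does not prevent \emph{other local factors of the same regenerated curve} from fusing the four strands into fewer components, and in the paper's applications they do exactly that --- the branch-point factor $Z_{i\,i'}$ (Lemma \ref{lem:branchcomponent}) merges $i$ with $i'$, and the cusp factors from a tangency (Lemma \ref{lem:tangentcomponent}) merge $j$ with $j'$, which is why the full link in Example-Proposition \ref{prop:3ptAexample} has only three components even though it contains such a node. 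So the lemma must be read as a statement about the node's own local contribution (no identifications are created by the node factor itself, and it produces the pairwise linkings summing to four between the eventual components), not as a claim that four distinct components survive in the global closure; with that reading your pure-braid argument is all that is needed, and the appeal to Proposition \ref{prop:intact} for global distinctness should be dropped.
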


%\begin{proof}
%We use the regeneration rule for a node (Theorem \ref{2rule}).  By Proposition \ref{prop:takej} we may take $j=i+1$.  Then by Property \ref{property:node} we obtain something similar to that in Proposition \ref{prop:conicconic} (B):  this related braid can be simplified to
%$(\sigma_{2i}\sigma_{2i-1}\sigma_{2i+1}\sigma_{2i})^2$.
%THIS MIGHT BE TRUE?  $\sigma_{2i-1}^2\sigma_{2i}\sigma_{2i+1}\sigma_{2i-1}^2\sigma_{2i}\sigma_{2i+1}^{-1}\sigma_{2i}\sigma_{2i+1}$.

%The closure of this braid is similar to that in Proposition \ref{prop:conicconic} (B):  this related braid gives the Hopf link L2a1 where each component is replaced by its ($2,0$)-cable.  This gives four components.
%\end{proof}

%{\red
%THE ABOVE MIGHT NOT BE TRUE ANYMORE!  CHECK!
%}

\subsection{Global contributions to the braid monodromy}
\label{subsec:global}

Next we present a statement that will allow us to move between similar cases.  Let $S$ be an arrangement in $\mathbb{CP}^2$.  It can be a plane curve or a branch curve of a surface $X$ or of one of its degenerations.  Let $r(S)$ be the rotation of $180^\circ$ of  $S$ around the horizontal line that is the chosen axis for the ordering of the singularities.

It is natural that the braid monodromy factorization of $r(S)$ is a rotation of the braid monodromy factorization of $S$.  Nevertheless it is not written anywhere, and for the sake of clarification for the reader and for coherence we include a proof here.

\begin{proposition}
\label{thm:mirror}
The local contributions to the braid monodromy factorization obtained from $S$ and $r(S)$ are related by a rotation around a line parallel to the direction of the braid.  Moreover the closures of these braids give the same links.
\end{proposition}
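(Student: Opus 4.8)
The plan is to track how the braid monodromy algorithm transforms under the rotation $r$, and then observe that the resulting transformation on braids is a symmetry that preserves closure. First I would recall the setup of the Moishezon–Teicher algorithm: one fixes a generic projection to a line (the axis), orders the branch points $p_1,\dots,p_s$ on that axis left-to-right, picks a fiber over a base point, and reads off the local braid monodromy $\varphi_\ell$ at each $p_\ell$ by transporting a small loop around $p_\ell$ back to the base fiber. The operation $r$ rotates $S$ by $180^\circ$ about the axis. Crucially, $r$ fixes the axis setwise and fixes each $p_\ell$ (they lie on the axis), so it does not permute the singularities or change their left-to-right order. What $r$ does change is the fiber: each vertical fiber $F_x$ over a point $x$ of the axis is sent to itself but flipped, so the punctures in $F_x$ (the points of $S$ over $x$) get reflected. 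If the punctures in the base fiber are labeled $1,\dots,m$ from, say, bottom to top, then $r$ relabels them $m, m-1,\dots,1$; equivalently it conjugates everything by the ``flip'' braid (the half-twist reversing the order of all strands), or more simply acts on $B_m$ by the involution $\sigma_i \mapsto \sigma_{m-i}$.

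Second I would make precise that the local braid monodromy of $r(S)$ at $p_\ell$ is exactly the image of $\varphi_\ell$ under this strand-reversal automorphism $\rho\colon B_m\to B_m$, $\rho(\sigma_i)=\sigma_{m-i}$. This follows because the whole picture over a neighborhood of $p_\ell$ — the braid traced by the punctures as one goes around $p_\ell$ — is literally the rotation by $180^\circ$ of the original picture, and rotating a braid diagram by $180^\circ$ about its axis of propagation is precisely the move that reverses the indices of all strands (it sends $Z_{i\,j}$ to $Z_{m+1-j\,\,m+1-i}$ and, by Property \ref{property:node}, $\sigma_i$ to $\sigma_{m-i}$, preserving signs of crossings). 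So the braid monodromy factorization of $r(S)$ is $\rho(\varphi_1)\cdots\rho(\varphi_s)$, i.e. the image under $\rho$ (applied factor by factor) of the factorization of $S$; this is exactly the asserted ``rotation around a line parallel to the direction of the braid.'' I would be slightly careful about one point: the base loops in Moishezon–Teicher are chosen with a consistent convention (all loops in the lower half-plane, say), and $r$ sends ``lower half-plane'' to ``upper half-plane''; but replacing the $g$-base by the reflected one only changes the factorization by simultaneous conjugation and reordering compatible with Hurwitz equivalence, which does not affect closure and does not affect the local factor associated with each singularity up to the same $\rho$.

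Third, for the closure statement I would argue that $\rho$-equivalent braids have the same closure. The automorphism $\rho$ is realized by conjugation in $B_m$ by the half-twist element $\Delta$ that reverses all strands: indeed $\Delta \sigma_i \Delta^{-1} = \sigma_{m-i}$. Hence for any braid $\beta$, $\rho(\beta) = \Delta\beta\Delta^{-1}$, and conjugate braids have isotopic closures (Markov's theorem, or just the obvious isotopy in the solid torus). Applying this to the full product, the closure of $\rho(\varphi_1)\cdots\rho(\varphi_s) = \Delta(\varphi_1\cdots\varphi_s)\Delta^{-1}$ equals the closure of $\varphi_1\cdots\varphi_s$. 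Thus $r(S)$ and $S$ yield the same link. The same conjugation argument applies verbatim to the ``local contribution'' obtained by deleting the braids at infinity, since $\rho$ is defined on the ambient braid group and restricts compatibly. The main obstacle I anticipate is purely bookkeeping: nailing down the orientation/sign conventions so that the $180^\circ$ rotation really induces $\sigma_i\mapsto\sigma_{m-i}$ and not, say, $\sigma_i\mapsto\sigma_{m-i}^{-1}$ (which would be the reflection in a plane rather than a rotation about an axis); the point is that a genuine rotation about the propagation axis keeps over-crossings as over-crossings, so crossing signs are preserved and the conjugation-by-$\Delta$ description is the correct one.
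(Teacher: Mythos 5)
Your proposal is correct, but it takes a genuinely different route from the paper. The paper proves the statement by a case-by-case verification: it fixes a single singularity, writes the local braid as a conjugate of $Z^\varepsilon_{i\,j}$ via Property \ref{property:node}, and then checks the fully regenerated cases explicitly — the branch point via Proposition \ref{prop:takej}, the node regenerated into four nodes in Lemma \ref{lem:fournodes}, and the tangency regenerated into three cusps in Lemma \ref{lem:threecusps} — by computing the simplified positive words and observing that the index substitution $\sigma_k\mapsto\sigma_{2n-k}$ matches them up; the closure claim is then asserted from the geometric fact that rotated braids close up to isotopic links. You instead argue once and for all: since $r$ acts trivially on the base of the projection and acts on each fiber by an orientation-preserving flip, the whole monodromy of $r(S)$ is the factor-by-factor image of that of $S$ under the strand-reversal automorphism $\rho(\sigma_i)=\sigma_{m-i}$, which you then realize as conjugation by the half-twist $\Delta$, so equality of closures is immediate from conjugation invariance. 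This is cleaner and uniform (no need for the two regeneration lemmas, and it covers all singularity types and any further regeneration at once), and the $\Delta$-conjugation observation is a nicer justification of the final sentence than the paper gives; what the paper's computational route buys is the explicit simplified braid words that are reused in the examples of Section \ref{sec:examples}. One small inaccuracy in your write-up: since $r$ is $(x,y)\mapsto(x,-y)$ it fixes the base coordinate pointwise, so the $g$-base of loops in the base is not reflected at all — your worry about lower versus upper half-plane loops concerns the fiber identification only, and your fallback Hurwitz/conjugation remark, while harmless, is not needed.
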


For the sake of clarification, we take the braid to be depicted vertically and thus will consider rotation of the braid around a vertical line.  However, some of the figures below depict the braids horizontally to conserve space.

\begin{proof}
First observe that the order of the singularities in $S$ is preserved by the rotation.  So we need only consider a single singularity.  We start by considering only a partial regeneration in which we have a branch point, simple node, or tangency.  Afterwards we continue to a complete regeneration where a simple node is regenerated into four nodes and tangency is regenerated into three cusps.

Suppose components $i$ and $j$ meet at a singularity.  
%Suppose that a singularity has two components $i$ and $j$ that meet at this singularity.
  The resulting braid from the braid monodromy algorithm is a conjugation of $Z^\varepsilon_{i\; j}$, where $\varepsilon$ is the exponent associated to the type of singularity as in \cite{MTII}.  By Property \ref{property:node}, $Z_{i\;j}$ can be rewritten as
\begin{equation}
\label{eq:ex:rotationBEFORE}
Z_{i\;j}=(\sigma_i\ldots\sigma_{j-2})\sigma_{j-1}(\sigma_i\ldots\sigma_{j-2})^{-1}=(\sigma_{i+1}\ldots\sigma_{j-1})^{-1}\sigma_{i}(\sigma_{i+1}\ldots\sigma_{j-1}).
\end{equation}

In $r(S)$ the components that meet at the associated singularity are numbered $(n+1)-i$ and $(n+1)-j$.

Now let us consider the local contribution to the braid monodromy obtained from this singularity in $r(S)$.  This braid is a conjugation of some $Z^\varepsilon_{(n+1)-j\;(n+1)-i}$, where $\varepsilon$ is the exponent associated to the type of singularity as in \cite{MTII}.  By Property \ref{property:node}, this can be re-written as
\begin{eqnarray}
\label{eq:ex:rotationAFTER}
Z_{(n+1)-j\;(n+1)-i} & = & (\sigma_{(n+1)-j}\ldots\sigma_{(n+1)-i-2})\sigma_{(n+1)-i-1}(\sigma_{(n+1)-j}\ldots\sigma_{(n+1)-i-2})^{-1}\\
& = & (\sigma_{(n+1)-j+1}\ldots\sigma_{(n+1)-i-1})^{-1}\sigma_{(n+1)-j}(\sigma_{(n+1)-j+1}\ldots\sigma_{(n+1)-i-1}).\nonumber
\end{eqnarray}

A rotation of some generator $\sigma_k^{\pm1}$ around a vertical line is $\sigma_{n-k}^{\pm1}$.  Thus we can see that Equations \ref{eq:ex:rotationBEFORE} and \ref{eq:ex:rotationAFTER} are related by a reflection around a vertical line.  In this way this rotation also holds for any conjugation, as this conjugation can be expressed in terms of the $\sigma_i$'s.

Having proved the partial regeneration, we move to the complete regeneration.  For the case of a branch point, we apply Proposition \ref{prop:takej} to the regeneration where some point $i$ becomes two points $i$ and $i'$.  The next two lemmas consider the cases of a single node and a tangency.

\begin{lemma}
\label{lem:fournodes}
For the case of a single node regenerated into four nodes in $S$, as is locally depicted in Figure \ref{fig:noderegeneration}, consider the resulting braid from this regeneration and the resulting braid from the regeneration of the singularity in $r(S)$.  Then these braids are related by a rotation around a vertical line.
\end{lemma}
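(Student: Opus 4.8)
The plan is to reduce the four-node case to the already-established partial regeneration result by treating each of the four regenerated nodes individually. Recall from Lemma \ref{lem:25}(f) that the node at the intersection of components $i$ and $j$ regenerates into the braid $Z^2_{ii',jj'}=Z^2_{i'j'}Z^2_{i'j}Z^2_{ij'}Z^2_{ij}$, a product of four squared half-twists corresponding to the four nodes. By Proposition \ref{prop:takej}, the strands carrying indices $i$, $i'$, $j$, $j'$ in the $2n$-strand braid are precisely $2i-1$, $2i$, $2j-1$, $2j$. So the whole local contribution is a product of four factors, each of which is a conjugate of a $Z^2_{k\ell}$ between two of these four strands, and each such factor was already handled by the partial-regeneration argument (Equations \ref{eq:ex:rotationBEFORE}--\ref{eq:ex:rotationAFTER}).

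First I would record that in $r(S)$ the node sits between components $(n+1)-i$ and $(n+1)-j$, and by the same regeneration lemma it contributes $Z^2_{(n+1-j)(n+1-j)',\,(n+1-i)(n+1-i)'}$, again a product of four factors. The key observation is that the regeneration $i\mapsto\{i,i'\}$, i.e. $2n$-strand index $i\mapsto\{2i-1,2i\}$, is compatible with the reflection: reflecting the $2n$-strand picture around a vertical axis sends strand $p$ to strand $2n+1-p$, which sends the pair $\{2i-1,2i\}$ (the regenerated copies of component $i$) to the pair $\{2n+1-2i,\,2n-2i+2\}=\{2(n+1-i)-1,\,2(n+1-i)\}$, exactly the regenerated copies of component $(n+1)-i$ in $r(S)$. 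Thus the reflection matches up the four strands of the $S$-picture with the four strands of the $r(S)$-picture in the expected way, and within each it interchanges the "unprimed/primed" roles consistently.

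Next I would apply the partial-regeneration result factor by factor: a rotation of $\sigma_k^{\pm1}$ around the vertical line is $\sigma_{2n-k}^{\pm1}$ (now on $2n$ strands), so each of the four conjugated $Z^2$ factors appearing in the $S$-contribution is carried by this reflection to the corresponding factor in the $r(S)$-contribution, by the identical computation as in Equations \ref{eq:ex:rotationBEFORE} and \ref{eq:ex:rotationAFTER} applied with $n$ replaced by $2n$ and the indices doubled. Since reflection is an anti-automorphism-free operation at the level of pictures (it reverses neither the strand direction nor the order of the factors along the braid axis — the ordering of singularities is preserved, as noted at the start of the proof), the product of the four factors is carried to the product of the four factors, in the same order. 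Hence the full four-node braid from $S$ and the full four-node braid from $r(S)$ are related by a rotation around a vertical line.

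The main obstacle, and the only point needing care, is bookkeeping: verifying that the index substitution $i\leftrightarrow(n+1)-i$ at the level of components induces exactly the strand substitution $p\leftrightarrow 2n+1-p$ after doubling, and that under this substitution the four pieces $Z^2_{i'j'}$, $Z^2_{i'j}$, $Z^2_{ij'}$, $Z^2_{ij}$ map to the four pieces of the $r(S)$ factorization without any spurious sign changes, reordering, or mismatch of primed versus unprimed strands. This is routine but must be done explicitly; once it is checked, the lemma follows from the partial-regeneration case with no new ideas.
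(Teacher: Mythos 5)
Your overall strategy (reduce to the partial-regeneration computation, double the indices, check the strand bookkeeping) is reasonable, and your observation that the reflection $p\mapsto 2n+1-p$ sends the pair $\{2i-1,2i\}$ to the doubled strands of component $(n+1)-i$ is correct. The gap is in the step asserting that each of the four factors is carried ``to the corresponding factor in the $r(S)$-contribution \ldots in the same order.'' If you actually perform the bookkeeping you defer as routine, it does \emph{not} come out this way. Write $a=(n+1)-j$, $b=(n+1)-i$. The reflection sends the primed strand $i'$ (position $2i$) to the \emph{unprimed} strand $b$ (position $2b-1$) and the unprimed strand $j$ (position $2j-1$) to the \emph{primed} strand $a'$ (position $2a$); consequently the ordered product $Z^2_{i'j'}Z^2_{i'j}Z^2_{ij'}Z^2_{ij}$ is carried to $Z^2_{ab}\,Z^2_{a'b}\,Z^2_{ab'}\,Z^2_{a'b'}$, whereas the local contribution of the node in $r(S)$, produced by the same regeneration rule (Lemma \ref{lem:25}), is the canonically ordered $Z^2_{a'b'}\,Z^2_{a'b}\,Z^2_{ab'}\,Z^2_{ab}$: the first and last factors are interchanged. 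Since the braid group is non-abelian, these two ordered products are not automatically the same element, so the lemma does not follow factor by factor; an additional identity for the full product is required, and your concluding sentence skips exactly this point.

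The missing step is small but genuine, and this is precisely where the paper argues differently: instead of working factor by factor, it takes $j=i+1$, expands the entire product $Z^2_{i'j'}Z^2_{i'j}Z^2_{ij'}Z^2_{ij}$ into generators via Lemma \ref{lem:25} and Property \ref{property:node}, simplifies the $S$-side to the explicit word $\sigma_{2i}\sigma_{2i-1}\sigma_{2i+1}\sigma^2_{2i}\sigma_{2i-1}\sigma_{2i+1}\sigma_{2i}$ and the $r(S)$-side to the analogous word at index $2(n-i)$, and only then compares the two explicit words under $\sigma_k\mapsto\sigma_{2n-k}$. You could repair your argument either by doing that computation, or by noting that both ordered products above equal the block full twist $Z^2_{aa',bb'}$ (equivalently, that the simplified word is invariant under interchanging the commuting generators $\sigma_{2a-1}$ and $\sigma_{2a+1}$, which is what reconciles the reflected factorization with the canonical one). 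Either way, some argument about the whole product, not just the individual $Z^2$ factors, is unavoidable.
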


\begin{proof}
Suppose that in $S$ the indices of the lines are $i$ and $j$; by Proposition \ref{prop:takej} we may take $j=i+1$.  Then the resulting braid is $Z^2_{i\;i'\;j\;j'}$, which by Proposition \ref{lem:25} and Property \ref{property:node} gives
\begin{equation}
Z^2_{i'\;j'}Z^2_{i'\;j}Z^2_{i\;j'}Z^2_{i\;j}=(\sigma^{-1}_{2i+1}\sigma^2_{2i}\sigma_{2i+1})(\sigma^2_{2i})(\sigma^{-1}_{2i+1}\sigma^{-1}_{2i}\sigma^2_{2i-1}\sigma_{2i}\sigma_{2i+1})(\sigma^{-1}_{2i}\sigma^2_{2i-1}\sigma_{2i}).
\end{equation}
This simplifies to $\sigma_{2i}\sigma_{2i-1}\sigma_{2i+1}\sigma^2_{2i}\sigma_{2i-1}\sigma_{2i+1}\sigma_{2i}$.

In the rotation $r(S)$ the indices of the lines are $(n+1)-j=(n+1)-(i+1)=(n-i)$ and $(n+1)-i=(n-i+1)$.  Then the resulting braid is $Z^2_{(n-i)\;(n-i)'\;(n-i+1)\;(n-i+1)'}$, which by Proposition \ref{lem:25} and Property \ref{property:node} simplifies to $\sigma_{2(n-i)}\sigma_{2(n-i)+1}\sigma_{2(n-i)-1}\sigma^2_{2(n-i)}\sigma_{2(n-i)+1}\sigma_{2(n-i)-1}\sigma_{2(n-i)}$.
%\begin{equation}
%Z^2_{(n-i)'\;(n-i+1)'}Z^2_{(n-i)'\;(n-i+1)}Z^2_{(n-i)\;(n-i+1)'}Z^2_{(n-i)\;(n-i+1)}=(\sigma^{-1}_{2i+1}\sigma^2_{2i}\sigma_{2i+1})(\sigma^2_{2i})(\sigma^{-1}_{2i+1}\sigma^{-1}_{2i}\sigma^2_{2i-1}\sigma_{2i}\sigma_{2i+1})(\sigma^{-1}_{2i}\sigma^2_{2i-1}\sigma_{2i}).
%\end{equation}

Then it is clear that these two simplifications are related by a rotation around a vertical line with $\sigma_{2i}$ rotated to $\sigma_{2n-(2i)}=\sigma_{2(n-i)}$, $\sigma_{2i-1}$ rotated to $\sigma_{2n-(2i-1)}=\sigma_{2(n-i)+1}$, and $\sigma_{2i+1}$ rotated to $\sigma_{2n-(2i+1)}=\sigma_{2(n-i)-1}$.
\end{proof}

\begin{lemma}
\label{lem:threecusps}
For the case of a tangency regenerated into three cusps in $S$, as is locally depicted in Figure \ref{fig:2ptcasesAlgGeom}, consider the resulting braid from this regeneration and the resulting braid from the regeneration of the singularity in $r(S)$.  Then these braids are related by a rotation around a vertical line.
\end{lemma}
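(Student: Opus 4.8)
Here is a plan, following the template of Lemma \ref{lem:fournodes} with the node regeneration rule replaced by the regeneration rule for a tangency.

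By Proposition \ref{prop:takej} we may assume that in $S$ the conic and the line meeting at the tangency carry the consecutive indices $i$ and $i+1$, so that in the fully regenerated picture on $2n$ strands the conic occupies the strands $2i-1,2i$ and the line the strands $2i+1,2i+2$ (or the reverse, according to which component has the smaller index). As recorded in Figure \ref{fig:2ptcasesAlgGeom} there are exactly two local sub-cases: case (A), in which the conic has the smaller index, and case (B), in which the line has the smaller index. The first thing I would establish is that the rotation $r$ interchanges these two sub-cases, because the conic stays a conic and the line stays a line while a component of index $a$ in $S$ becomes a component of index $(n+1)-a$ in $r(S)$, reversing their relative order. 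So the claim to be proved is really that the case-(A) braid of $S$ rotates onto the case-(B) braid of $r(S)$, and vice versa.

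For the computation itself I would argue as follows. In case (A) of $S$, by Theorem \ref{3rule} the tangency factor regenerates to the product of three cusp braids whose expansion is \ref{LeftCusp1}, and by the simplification carried out in the proof of Proposition \ref{prop:BranchCusp} this product equals the palindrome $\sigma_{2i}\sigma_{2i+1}^3\sigma_{2i}\sigma_{2i+1}^3\sigma_{2i}$. In $r(S)$ the line (index $i+1$ in $S$) now has index $(n+1)-(i+1)=n-i$ and the conic (index $i$ in $S$) now has index $(n+1)-i=n-i+1$; since the line now has the smaller index this is case (B) with the role of $i$ played by $n-i$, so by \ref{RightCusp1} the corresponding three-cusp braid simplifies to $\sigma_{2(n-i)}\sigma_{2(n-i)-1}^3\sigma_{2(n-i)}\sigma_{2(n-i)-1}^3\sigma_{2(n-i)}$. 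Applying the generatorwise rotation $\sigma_k^{\pm1}\mapsto\sigma_{2n-k}^{\pm1}$ around a vertical line on $2n$ strands, exactly as in the proof of Proposition \ref{thm:mirror}, sends $\sigma_{2i}\mapsto\sigma_{2n-2i}=\sigma_{2(n-i)}$ and $\sigma_{2i+1}\mapsto\sigma_{2n-2i-1}=\sigma_{2(n-i)-1}$; since the cusp braid is a palindrome the order of its letters is immaterial, so this rotation carries the case-(A) braid of $S$ precisely onto the case-(B) braid of $r(S)$. The complementary statement, that case (B) of $S$ rotates onto case (A) of $r(S)$, is obtained in the same way, now using the palindrome $\sigma_{2i}\sigma_{2i-1}^3\sigma_{2i}\sigma_{2i-1}^3\sigma_{2i}$ from \ref{RightCusp1} together with $\sigma_{2i-1}\mapsto\sigma_{2(n-i)+1}$. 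Finally, the accompanying branch point of the conic is not part of the three cusps and has already been accounted for by Proposition \ref{prop:takej} applied to the splitting $i\mapsto i,i'$, so nothing further is needed.

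The main obstacle here is purely organizational rather than computational: one must keep track of which of the two options in Theorem \ref{3rule} is forced at each step (namely, the line is the component that doubles, so it contributes the primed index), and one must verify carefully that $r$ swaps the sub-cases (A) and (B), so that the two distinct palindromes of Proposition \ref{prop:BranchCusp} are matched against each other under $\sigma_k\mapsto\sigma_{2n-k}$ rather than each against itself. The palindromic form of the three-cusp braids is exactly what lets us sidestep any question of whether the rotation reverses the order of the word.
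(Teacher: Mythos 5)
Your proposal is correct and follows essentially the same route as the paper: reduce to consecutive indices via Proposition \ref{prop:takej}, note that $r$ sends case (A) of Figure \ref{fig:2ptcasesAlgGeom} to case (B) with $i$ replaced by $n-i$, compare the simplified three-cusp words $\sigma_{2i}\sigma_{2i+1}^3\sigma_{2i}\sigma_{2i+1}^3\sigma_{2i}$ and $\sigma_{2(n-i)}\sigma_{2(n-i)-1}^3\sigma_{2(n-i)}\sigma_{2(n-i)-1}^3\sigma_{2(n-i)}$ from Proposition \ref{prop:BranchCusp}, and match them under $\sigma_k\mapsto\sigma_{2n-k}$. Your extra remarks (the palindromic form and the explicit check of the reverse sub-case) are harmless refinements of the same argument.
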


\begin{proof}
%To complete the regeneration process, a tangency is regenerated into three cusps.
Suppose that in $S$ the index $i$ of the conic is less than the index $j$ of the line as in Figure \ref{fig:2ptcasesAlgGeom} (A).  By Proposition \ref{prop:takej} we may take $j=i+1$.  Then the resulting braid is $Z^3_{i'\;j\;j'}$, which by Proposition \ref{prop:BranchCusp} can be written as in Equation \ref{LeftCusp1} using the regeneration rule for tangency (Theorem \ref{3rule}).  This simplifies to $\sigma_{2i}\sigma_{2i+1}^3\sigma_{2i}\sigma_{2i+1}^3\sigma_{2i}$.

In the rotation $r(S)$ the index $(n+1)-j=(n+1)-(i+1)=(n-i)$ of the line is less than the index $(n+1)-i=(n-i+1)$ of the conic as in Figure \ref{fig:2ptcasesAlgGeom} (B).  Then the resulting braid is $Z^3_{(n-i)\;(n-i)'\;(n-i+1)}$, which by Proposition \ref{prop:BranchCusp} can be written as in Equation \ref{RightCusp1} using the regeneration rule for tangency (Theorem \ref{3rule}).  This simplifies to $\sigma_{2(n-i)}\sigma_{2(n-i)-1}^3\sigma_{2(n-i)}\sigma_{2(n-i)-1}^3\sigma_{2(n-i)}$.

Then it is clear that Equations \ref{LeftCusp1} and \ref{RightCusp1} are related by a rotation around a vertical line with $\sigma_{2i}$ rotated to $\sigma_{2n-(2i)}=\sigma_{2(n-i)}$ and $\sigma_{2i+1}$ rotated to $\sigma_{2n-(2i+1)}=\sigma_{2(n-i)-1}$.
\end{proof}

Finally if the braids are related by a rotation around a vertical line, then the closures of these braids are the same links.
\end{proof}

\section{Examples}
\label{sec:examples}

In this section the three main examples we will consider will come from 2-points and 3-points.  We exhibit their braid monodromies and give some properties of their closures as links.

\begin{ex-prop}
\label{prop:2ptexample}
The closure of the braid for the complete regeneration for a 2-point is a two-component link where each component is itself unknotted and the two components have linking number four.%  In particular, this link is L4a1 where one component is replaced by its (2,1)-cable.
\end{ex-prop}

\begin{proof}
By Proposition \ref{prop:BranchCusp}, the braid for the complete regeneration for a 2-point is
\begin{equation}
\label{eq:2ptA}
\sigma_2\sigma_3^2\sigma_2(\sigma_1\sigma_3)\sigma_2\sigma_3^2\sigma_2
\end{equation}
for the partial regeneration given in Figure \ref{fig:2ptcasesAlgGeom} (A) and
\begin{equation}
\label{eq:2ptB}
\sigma_2\sigma_1^2\sigma_2(\sigma_1\sigma_3)\sigma_2\sigma_1^2\sigma_2
\end{equation}
for the partial regeneration given in Figure \ref{fig:2ptcasesAlgGeom} (B).  We depict these braids horizontally in Figure \ref{fig:single2ptbraid}.

\begin{figure}[h]
\begin{center}
\begin{tikzpicture}

\draw (-1,1.5) node {$(A)$};

%Inverse elts at (i-1)/(\ell-j) where \sigma_i occurs at position j for length \ell
\foreach \x/ \y in {0/1, 1/0, 2/0, 3/1, 4/0, 4/2, 5/1, 6/0, 7/0, 8/1}
    {
    \draw (\x+1,\y) -- (\x,\y+1);
    \draw[color=white, line width=10] (\x,\y) -- (\x+1,\y+1);
    \draw (\x,\y) -- (\x+1,\y+1);
    }

\draw (0,0) -- (1,0);
\draw (1,2) -- (3,2);
\draw (3,0) -- (4,0);
\draw (5,0) -- (6,0);
\draw (6,2) -- (8,2);
\draw (8,0) -- (9,0);
\draw (0,3) -- (4,3);
\draw (5,3) -- (9,3);

\draw (-1,-2.5) node {$(B)$};

\foreach \x/ \y in {0/-3, 1/-2, 2/-2, 3/-3, 4/-4, 4/-2, 5/-3, 6/-2, 7/-2, 8/-3}
    {
    \draw (\x+1,\y) -- (\x,\y+1);
    \draw[color=white, line width=10] (\x,\y) -- (\x+1,\y+1);
    \draw (\x,\y) -- (\x+1,\y+1);
    }

\draw (0,-1) -- (1,-1);
\draw (1,-3) -- (3,-3);
\draw (3,-1) -- (4,-1);
\draw (5,-1) -- (6,-1);
\draw (6,-3) -- (8,-3);
\draw (8,-1) -- (9,-1);
\draw (0,-4) -- (4,-4);
\draw (5,-4) -- (9,-4);

\end{tikzpicture}
    \caption{The braids for the complete regeneration for a 2-point given in Equations (A) \ref{eq:2ptA} and (B) \ref{eq:2ptB}.}
    \label{fig:single2ptbraid}
\end{center}
\end{figure}
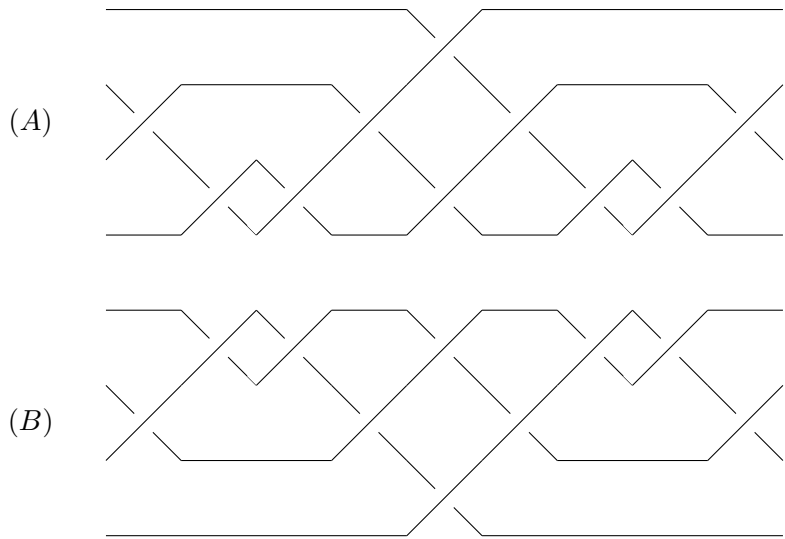

These braids are related by rotation around a line parallel to the direction of the braid (horizontally in the figure), and thus the closures give the same link by Proposition \ref{thm:mirror}.  Thus we only consider Figure \ref{fig:2ptcasesAlgGeom} (A) where the conic is labelled by $i$ and $i'$ and the line is labelled by $j$.  However, this is only a partial regeneration; in the complete regeneration the line becomes two lines $j$ and $j'$.

Via Proposition \ref{prop:takej} we may associate these four components to strands in the braid.

By Lemma \ref{lem:branchcomponent}, the branch point of the conic contributes a single component to the link because the two strands labelled by $i$ and $i'$ are twisted by $Z_{i\;i'}$.  
See Figure \ref{fig:new2ptbraidsB} for one of these braids depicted horizontally.

\begin{figure}[h]
\begin{center}
\begin{tikzpicture}

%\draw (-1,-2.5) node {$(A2)$};

%Inverse elts at (i-1)/(\ell-j) where \sigma_i occurs at position j for length \ell
\foreach \x/ \y in {4/-2, 5/-3, 6/-4, 7/-4, 8/-3}
    {
    \draw (\x+1,\y) -- (\x,\y+1);
    \draw[color=white, line width=10] (\x,\y) -- (\x+1,\y+1);
    \draw (\x,\y) -- (\x+1,\y+1);
    }

%Elts at (i-1)/(\ell-j) where \sigma_i occurs at position j for length \ell
\foreach \x/ \y in {0/-3,1/-4,2/-4,3/-3}
    {
    \draw (\x,\y) -- (\x+1,\y+1);
    \draw[color=white, line width=10] (\x+1,\y) -- (\x,\y+1);
    \draw (\x+1,\y) -- (\x,\y+1);
    }

\draw (0,-4) -- (1,-4);
\draw (1,-2) -- (3,-2);
\draw (3,-4) -- (6,-4);
\draw (6,-2) -- (8,-2);
\draw (8,-4) -- (9,-4);
\draw (0,-1) -- (4,-1);
\draw (4,-3) -- (5,-3);
\draw (5,-1) -- (9,-1);

\end{tikzpicture}
    \caption{One of the braids obtained from the regeneration of a branch point as in Lemma \ref{lem:branchcomponent} and Figure \ref{fig:2ptcases} (A2).}
    \label{fig:new2ptbraidsB}
\end{center}
\end{figure}
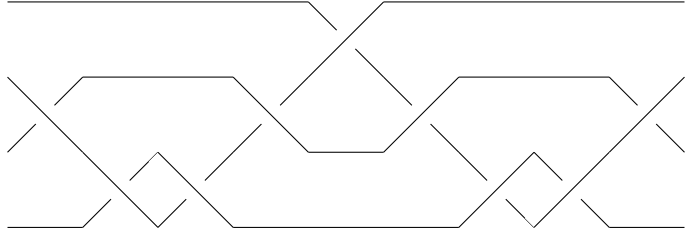

By Lemma \ref{lem:tangentcomponent}, the tangency twists the two strands $j$ and $j'$ with $Z_{j\;j'}$, creating a single component for the line.  Furthermore, the tangency links the $i'$-th strand with both strands $j$ and $j'$ giving a linking number of four total.  See Figure \ref{fig:new2ptbraidsA} for one of these braids depicted horizontally.

\begin{figure}[h]
\begin{center}
\begin{tikzpicture}

%\draw (-1,2.5) node {$(A1)$};

%Inverse elts at (i-1)/(\ell-j) where \sigma_i occurs at position j for length \ell
\foreach \x/ \y in {0/2, 1/1, 2/1, 3/1, 4/2, 5/1, 6/1, 7/1, 8/2}
    {
    \draw (\x+1,\y) -- (\x,\y+1);
    \draw[color=white, line width=10] (\x,\y) -- (\x+1,\y+1);
    \draw (\x,\y) -- (\x+1,\y+1);
    }

%Elts at (i-1)/(\ell-j) where \sigma_i occurs at position j for length \ell
%\foreach \x/ \y in {0/1,0/2,0/3,1/4,0/5,0/6,0/7,1/8,0/9,0/10,0/11}
%    {
%    \draw (\x,\y) -- (\x+1,\y+1);
%    \draw[color=white, line width=10] (\x+1,\y) -- (\x,\y+1);
%    \draw (\x+1,\y) -- (\x,\y+1);
%    }

\draw (0,4) -- (9,4);
\draw (0,1) -- (1,1);
\draw (1,3) -- (4,3);
\draw (4,1) -- (5,1);
\draw (5,3) -- (8,3);
\draw (8,1) -- (9,1);

%\draw[dashed] (-1,0) -- (9.5,0);

\end{tikzpicture}
    \caption{One of the braids obtained from the regeneration of a tangency as in Lemma \ref{lem:tangentcomponent} and Figure \ref{fig:2ptcases} (A1).}
    \label{fig:new2ptbraidsA}
\end{center}
\end{figure}
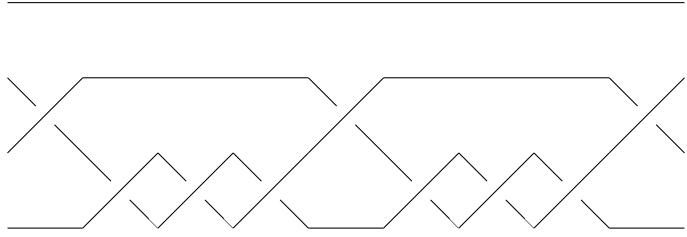

The link may be stabilized to get rid of the first strand resulting in $\sigma_1\sigma_2^2\sigma_1\sigma_2\sigma_1\sigma_2^2\sigma_1$ and its rotation.

In particular, this link is L4a1 where one component is replaced by its (2,1)-cable.
\end{proof}

\begin{remark}
Recall Proposition 16 (A) above in which we considered the partial regeneration for a line tangent to a conic.  There we obtained the link L4a1 without the cabling.
\end{remark}

\begin{ex-prop}
\label{prop:3ptAexample}
The closure of the braid for the complete regeneration for a 3-point of the first type is a three-component link where each component is itself unknotted and each pair of components has linking number four.
\end{ex-prop}

\begin{proof}
Recall that for a 3-point of the first type there are three cases as in Figure \ref{fig:3ptcasesAlgGeom}, where the singularities are ordered.

By Proposition \ref{prop:takej} we may take $(i,j,k)=(1,2,3)$.

Then in the first case (A), the four singularities give, respectively, local contributions as follows:
\begin{enumerate}
	\item $\sigma_4^3(\sigma_4^{-1}\sigma_3^3\sigma_4)(\sigma_3^2\sigma_4^3\sigma_3^{-2})$
	\item $(\sigma_4^{-2}\sigma_3^{-1}\sigma_2^2\sigma_3\sigma_4^2)(\sigma_3\sigma_4^{-2}\sigma_3^{-1}\sigma_2^2\sigma_3\sigma_4^2\sigma_3^{-1})(\sigma_4^{-2}\sigma_3^{-1}\sigma_2^{-1}\sigma_1^2\sigma_2\sigma_3\sigma_4^2)(\sigma_3\sigma_4^{-2}\sigma_3^{-1}\sigma_2^{-1}\sigma_1^2\sigma_2\sigma_3\sigma_4^2\sigma_3^{-1})$
	\item $(\sigma_4^{-1}\sigma_3^{-1}\sigma_2^3\sigma_3\sigma_4)(\sigma_4^{-1}\sigma_3^{-1}\sigma_2^{-1}\sigma_1^3\sigma_2\sigma_3\sigma_4)(\sigma_1^2\sigma_4^{-1}\sigma_3^{-1}\sigma_2^3\sigma_3\sigma_4\sigma_1^{-2})$
	\item $\sigma_4^{-1}\sigma_3^{-1}\sigma_2^{-1}\sigma_1^{-2}\sigma_2^{-1}\sigma_3^{-1}\sigma_4^{-1}\sigma_5\sigma_4\sigma_3\sigma_2\sigma_1^2\sigma_2\sigma_3\sigma_4$
\end{enumerate}

In the second case (B), we have:
\begin{enumerate}
	\item $\sigma_2^3(\sigma_2^{-1}\sigma_1^3\sigma_2)(\sigma_1^2\sigma_2^3\sigma_1^{-2})$
	\item $(\sigma_5^{-1}\sigma_4^3\sigma_5)\sigma_4^3(\sigma_5\sigma_4^3\sigma_5^{-1})$
	\item $\sigma_4^{-1}\sigma_5^{-2}\sigma_4^{-1}\sigma_2^{-1}\sigma_1^{-2}\sigma_2^{-1}\sigma_3\sigma_2\sigma_1^2\sigma_2\sigma_4\sigma_5^2\sigma_4$
	\item $(\sigma_5^{-1}\sigma_4^{-1}\sigma_3\sigma_2^2\sigma_3^{-1}\sigma_4\sigma_5)(\sigma_4^{-1}\sigma_3\sigma_2^2\sigma_3^{-1}\sigma_4)(\sigma_5^{-1}\sigma_4^{-1}\sigma_3\sigma_2^{-1}\sigma_1^2\sigma_2\sigma_3^{-1}\sigma_4\sigma_5)(\sigma_4^{-1}\sigma_3\sigma_2^{-1}\sigma_1^2\sigma_2\sigma_3^{-1}\sigma_4)$
\end{enumerate}

And in the third case (C), we have:
\begin{enumerate}
	\item $(\sigma_3^{-1}\sigma_2^3\sigma_3)\sigma_2^3(\sigma_3\sigma_2^3\sigma_3^{-1})$
	\item $(\sigma_5^{-1}\sigma_3\sigma_2^{-2}\sigma_3^{-1}\sigma_4^2\sigma_3\sigma_2^2\sigma_3^{-1}\sigma_5)(\sigma_3\sigma_2^{-2}\sigma_3^{-1}\sigma_4^2\sigma_3\sigma_2^2\sigma_3^{-1})(\sigma_5^{-1}\sigma_3^2\sigma_2^{-2}\sigma_3^{-1}\sigma_4^2\sigma_3\sigma_2^2\sigma_3^{-2}\sigma_5)(\sigma_3^2\sigma_2^{-2}\sigma_3^{-1}\sigma_4^2\sigma_3\sigma_2^2\sigma_3^{-2})$
	\item $(\sigma_5^{-1}\sigma_4\sigma_3\sigma_2^3\sigma_3^{-1}\sigma_4^{-1}\sigma_5)(\sigma_4\sigma_3\sigma_2^3\sigma_3^{-1}\sigma_4^{-1})(\sigma_5\sigma_4\sigma_3\sigma_2^3\sigma_3^{-1}\sigma_4^{-1}\sigma_5^{-1})$
	\item $\sigma_2^{-1}\sigma_3^{-1}\sigma_4^{-1}\sigma_5^{-2}\sigma_4^{-1}\sigma_3^{-1}\sigma_2^{-1}\sigma_1\sigma_2\sigma_3\sigma_4\sigma_5^2\sigma_4\sigma_3\sigma_2$
\end{enumerate}

%The braids in (A) and (C) are related by rotation around a line parallel with the braid, and thus the closures give the same link by Theorem \ref{thm:mirror}.  Thus we only consider (A) and (B).%Figure \ref{fig:2ptcasesAlgGeom} (A) where the conic is labelled by $i$ and $i'$ and the line is labelled by $j$.  However, this is only a partial regeneration; in the complete regeneration the line becomes two lines $j$ and $j'$.

The simplification for (A) is a word of length 27 with all its generators positive: $$\sigma_4\sigma_3^2\sigma_4\sigma_2\sigma_3\sigma_1\sigma_2\sigma_4\sigma_3^2\sigma_2(\sigma_4\sigma_3\sigma_2\sigma_1)\sigma_2\sigma_3(\sigma_5\sigma_4\sigma_3\sigma_2\sigma_1)(\sigma_1\sigma_2\sigma_3\sigma_4).$$

The simplification for (B) is also a word of length 27 with all its generators positive: $$(\sigma_4\sigma_3\sigma_2\sigma_1)(\sigma_5\sigma_4\sigma_3\sigma_2)(\sigma_1\sigma_2\sigma_3\sigma_4\sigma_5)(\sigma_1\sigma_2\sigma_3\sigma_4)(\sigma_3\sigma_2\sigma_1\sigma_3\sigma_2\sigma_3\sigma_2\sigma_1\sigma_3\sigma_2).$$

The braid in (C) is related by rotation around a line parallel with the braid to the braid in (A), and thus the closures give the same link by Proposition \ref{thm:mirror}.

Via Proposition \ref{prop:takej} we may associate the six components to six strands in the braid.  By Lemma \ref{lem:branchcomponent}, the branch point of the conic contributes a single component to the link.  By Lemma \ref{lem:tangentcomponent}, each tangency contributes a single component for the relevant line and links the conic with this line giving a linking number of four total.  By Lemma \ref{lem:nodecomponents}, the node on the two lines links the two line components giving a linking number of four total.

The closures of (A) and (B) are in fact the same link; this can be given by the torus link $T(3,3)$ %closure of $\Delta_3^2=(121)^2$
 with each component replaced by its $(2,1)$-cable.
%
%BOTH (A) and (B) are the same!  Take a Hopf link of Red and Black, both with (2,1)-cables.  Then take the green (2,1)-cable and loop it around horizontally the braiding (but not the outer closing pieces).
\end{proof}

\begin{ex-prop}
\label{prop:3ptBexample}
The closure of the braid for the complete regeneration for a 3-point of the second type is a four-component link where each component is itself unknotted, whose linking numbers are given by the following matrix:
\begin{equation} 
\left( 
\begin{matrix} 
  & 1 & 2 & 2\\ 
1 &   & 2 & 2\\ 
2 & 2 &   & 4\\ 
2 & 2 & 4 &   
\end{matrix} 
\right).
\end{equation}
%two of which have linking number one with two others that have linking number four that each have linking number two with each of the first two.
\end{ex-prop}

\begin{proof}
Recall that for a 3-point of the second type there is just a single case as in Figure \ref{fig:fig15}.  The first two singularities give three cusps and a branch point, the next gives four nodes, and the last two give another three cusps and a branch point.

By Proposition \ref{prop:takej} we may take $(i,j,k)=(1,2,3)$.

%{\red Maybe we can use the Partial regeneration:
%$$=\sigma_2\sigma_1^2\sigma_2\sigma_4^2\sigma_3\sigma_2\sigma_1\sigma_4\sigma_3^2\sigma_2\sigma_3^2\sigma_4\sigma_1\sigma_2$$
%and then say something about this link....
%}
%{\blue  Meirav, I don't think this belongs here.}

Then the singularities give, respectively, local contributions as follows:
\begin{enumerate}
	\item $\sigma_4^3(\sigma_4^{-1}\sigma_3^3\sigma_4)(\sigma_3^2\sigma_4^3\sigma_3^{-2})$
	\item $\sigma_4^{-1}\sigma_3^{-2}\sigma_4^{-1}\sigma_5\sigma_4\sigma_3^2\sigma_4$
	\item $(\sigma_5^{-1}\sigma_4^{-1}\sigma_3^{-1}\sigma_2^2\sigma_3\sigma_4\sigma_5)(\sigma_5^{-1}\sigma_4^{-1}\sigma_3^{-1}\sigma_2^{-1}\sigma_1^2\sigma_2\sigma_3\sigma_4\sigma_5)\\
(\sigma_4^{-1}\sigma_3^{-2}\sigma_4^{-2}\sigma_3^{-1}\sigma_2^2\sigma_3\sigma_4^{2}\sigma_3^2\sigma_4)(\sigma_4^{-1}\sigma_3^{-2}\sigma_4^{-2}\sigma_3^{-1}\sigma_2^{-1}\sigma_1^2\sigma_2\sigma_3\sigma_4^2\sigma_3^2\sigma_4)$
	\item $(\sigma_3^{-1}\sigma_2^3\sigma_3)\sigma_2^{3}(\sigma_3\sigma_2^3\sigma_3^{-1})$
	\item $\sigma_2^{-1}\sigma_3^{-2}\sigma_2^{-1}\sigma_1\sigma_2\sigma_3^2\sigma_2$
\end{enumerate}

The simplification is a word of length 28 with all its generators positive: $$\sigma_1\sigma_2\sigma_5\sigma_4\sigma_3\sigma_5\sigma_2\sigma_3\sigma_1\sigma_2(\sigma_2\sigma_3\sigma_4\sigma_5)\sigma_5\sigma_4^2\sigma_5\sigma_3\sigma_4^2\sigma_3\sigma_4\sigma_2\sigma_3\sigma_4^2\sigma_3.$$

Via Proposition \ref{prop:takej} we may associate the six components to six strands in the braid.  By Lemma \ref{lem:branchcomponent}, the branch points of the two conics each contributes a single component to the link.  By Lemma \ref{lem:tangentcomponent}, each tangency contributes a single half-twist on the two components associated to the line, giving a single full-twist of the two strands associated to the line, linking each of these components with each of the conic components twice.  This accounts for singularities (1), (2), (4), and (5) on the list above.

%{\red change ``half-twist'' above!}

The remaining singularity does not actually appear in the partial regeneration shown in Figure \ref{fig:fig15}.  This is a node at which the conics intersect, and we apply the regeneration rule for a node Theorem\ref{2rule}.  By Lemma \ref{lem:nodecomponents} this gives links the two conic components with linking number four.

This link is the torus link $T(3,3)$ with two strands (associated with the conics) replaced by their $(2,1)$-cables and one strand (associated with the line) replaced by its $(2,2)$-cable.
\end{proof}

These last two examples lead to the obvious conclusion.

\begin{corollary}
The 3-points of the first type and the 3-points of the second type give different braids.
\end{corollary}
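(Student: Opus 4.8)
The plan is to derive this immediately from the two preceding Example-Propositions, using only the elementary fact that the closure of a braid is a well-defined function of the braid whose output link is a topological invariant. In Example-Proposition~\ref{prop:3ptAexample} the closure of the (completely regenerated) braid of a 3-point of the first type was identified as a link with exactly three components --- in fact $T(3,3)$ with each component replaced by a $(2,1)$-cable --- while in Example-Proposition~\ref{prop:3ptBexample} the closure of the braid of a 3-point of the second type was identified as a link with exactly four components. Both braids live on $2\cdot 3 = 6$ strands, so they are elements of the same braid group and it makes sense to ask whether they coincide.

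First I would recall that a braid on $N$ strands induces a permutation in $S_N$ by following each strand from one end to the other, and that the number of components of its closure equals the number of cycles of this permutation; hence equal braids have closures with the same number of components. Since $3 \neq 4$, the two braids cannot be equal. I would then note that this conclusion is robust under the coarser equivalences relevant to braid monodromy: conjugation (Markov~I), Hurwitz moves on a factorization, and stabilization (Markov~II) all preserve the closure link (the product braid is unchanged by a Hurwitz move $(a,b)\mapsto(aba^{-1},a)$, conjugate braids have isotopic closures, and stabilization is a Reidemeister-type move on the closure), so even the braid monodromy factorizations of the two regenerations are inequivalent. For completeness one can add the redundant observation that the links differ not merely in component count but in their linking data --- all pairwise linking numbers are $4$ in the first type, whereas the matrix in Example-Proposition~\ref{prop:3ptBexample} contains $1$'s and $2$'s --- so no relabeling of components could match them; and that the statement is insensitive to which of the cases (A), (B), (C) is used, since those give the same link by Proposition~\ref{thm:mirror}.

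There is essentially no obstacle: the corollary is a formal consequence of the explicit link identifications already carried out. The single point deserving a sentence of care is pinning down the sense of \emph{different braids} --- we mean that the braids produced by the Moishezon--Teicher algorithm from the two regenerations are not the same element (and, \emph{a fortiori}, their braid monodromy factorizations are not equivalent), which is exactly what the distinctness of the closure links gives. If one wanted a self-contained argument avoiding the full link computations, the cleanest route is still the component-count one above, since the number of components only requires knowing the underlying permutation of each braid, which can be read off directly from the positive words of length $27$ and $28$ displayed in the proofs of Example-Propositions~\ref{prop:3ptAexample} and~\ref{prop:3ptBexample}.
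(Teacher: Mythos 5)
Your proposal is correct and follows essentially the same route as the paper, which presents the corollary as an immediate consequence of Example-Propositions \ref{prop:3ptAexample} and \ref{prop:3ptBexample}: the closures are links with three versus four components (and different linking data), so the braids cannot coincide. Your added remarks on the permutation/cycle count and on invariance under conjugation, stabilization, and Hurwitz moves simply make explicit what the paper leaves as obvious.
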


\section*{Acknowledgments} This work was supported in part by the Edmund Landau
Center for Research in Mathematics,  the Emmy Noether Research
Institute for Mathematics, the Minerva Foundation (Germany), the
EU-network HPRN-CT-2009-00099(EAGER), the Israel Science
Foundation grant $\#$ 8008/02-3 (Excellency Center ``Group
Theoretic Methods in the Study of Algebraic Varieties''), and  the
Oswald Veblen Fund.

\bibliographystyle{abbrv}%amsplan}%plain}%amsalpha}
%\bibliography{11DecemberBibliography}
\bibliography{12NovBibliography}

\end{document}